\documentclass[pdflatex,sn-mathphys-num]{sn-jnl}

\usepackage{amsmath,amssymb,amsfonts,amsthm}
\usepackage{mathtools}
\usepackage{microtype}
\usepackage{enumitem}
\usepackage{cleveref}
\usepackage{braket}
\usepackage[dvipsnames]{xcolor}

\mathtoolsset{showonlyrefs}
\allowdisplaybreaks

\newtheorem{theorem}{Theorem}[section]
\newtheorem{proposition}[theorem]{Proposition}
\newtheorem{lemma}[theorem]{Lemma}
\newtheorem{corollary}[theorem]{Corollary}
\theoremstyle{remark}

\DeclareMathOperator{\conv}{conv}
\DeclareMathOperator{\cone}{cone}
\DeclareMathOperator{\clconv}{\overline{conv}}

\newcommand{\R}{\mathbb R}
\newcommand{\Smat}{\mathbb S}
\newcommand{\vx}{\mathbf{x}}
\newcommand{\vr}{\mathbf{r}}
\newcommand{\vc}{\mathbf{c}}
\newcommand{\cC}{\bar{\mathcal C}}
\DeclareRobustCommand{\Fset}{F}
\newcommand{\Gone}{G_1}
\newcommand{\Gtwo}{G_2}
\newcommand{\Ncone}{\mathcal N}
\newcommand{\Sigmacone}{\Sigma}

\newcommand{\Poct}{\mathcal P}

\def\mxX{\begin{pmatrix}
		1 & \vx^\top\\ \vx & X
	\end{pmatrix}}

\title[Nonnegative quadratics on a quadrant]{Nonnegative Quadratics over a Quadrant with a Bilinear Constraint}
\author[1]{\fnm{Yipeng} \sur{Zhang}}
\author[1]{\fnm{Yuyuan} \sur{Ouyang}}
\author[1]{\fnm{Boshi} \sur{Yang}}
\affil[1]{\orgdiv{School of Mathematical and Statistical Sciences}, \orgname{Clemson University}, \orgaddress{\city{Clemson}, \postcode{29634}, \state{South Carolina}, \country{USA}}}

\begin{document}

\abstract{%
We study quadratic polynomials that are nonnegative on the non-compact set
\[
\Fset:=\{(x_1,x_2)\in\mathbb R^2:\ x_1\ge 0,\ x_2\ge 0,\ x_1x_2\le 1\}.
\]
All extreme rays of the cone of nonnegative quadratic polynomials are characterized on this set. The characterization allows us to study parameterized valid inequalities for quadratic convexifications
involving $\Fset$, which yields a
semidefinite representation of its lifted convex hull in the
quadratic space.
Our analysis on extreme ray characterization separates the positive-semidefinite (PSD) and non-PSD branches,
reduces the latter to boundary nonnegativity, and classifies the boundary
contacts of the relevant extreme rays.  
By reparameterization, we also find a non-trivial and non-permutation-symmetric six-dimensional linear section of the cone of nonnegative homogeneous ternary octics that are sum-of-squares. We also show that our lifted convex hull result yields a degree-bounded preordering certificate of nonnegative quadratics on $\Fset$ and a degree-bounded certificate for a family of nonnegative quartics on the half-strip.
}
\maketitle

\section{Introduction} \label{sec:intro}

A fundamental problem in quadratically constrained quadratic programming (QCQP) is to convexify the feasible region in the lifted space. For a set
\(F\subseteq\R^n\), its closed lifted convex hull is defined by
\[
    \cC(F)
    :=
    \clconv\left\{
        \begin{pmatrix}1\\ \vx\end{pmatrix}
        \begin{pmatrix}1\\ \vx\end{pmatrix}^{\!\top}
        \;\middle|\;
        \vx\in F
    \right\}.
\]
Writing a member of \(\cC(F)\) as
$
    Y=
    \begin{pmatrix}
        1 & \vx^\top\\
        \vx & X
    \end{pmatrix},
$
the quadratic objective function
\(\vx^\top Q\vx+2\vc^\top\vx+\gamma\) can be linearized as
\(Q\mathbin{\bullet}X+2\vc^\top\vx+\gamma\), where $Q \bullet X$ is the Frobenius product of symmetric matrices $Q$ and $X$. Minimizing the latter over
\(\cC(F)\) gives the same infimum as minimizing the original
quadratic over \(F\). Exact descriptions of \(\cC(F)\) are thus universal
convex reformulations for quadratic objectives on \(F\), while partial
descriptions provide valid inequalities that strengthen standard
semidefinite relaxations.

Characterizing \(\cC(F)\) is computationally intractable in general, but semidefinite representations are known for several important special structures. Low-dimensional simplices and boxes admit representations based on positive semidefiniteness together with entrywise nonnegativity or reformulation--linearization technique (RLT) constraints \cite{AnstreicherBurer2010, sherali2013reformulation}. The trust-region subproblem and its variants furnish another influential class for which semidefinite and
second-order-cone constraints yield exact formulations or strong relaxations
\cite{anstreicher2017kronecker, BurerAnstreicher2013, burer2025slightly, kelly2023semidefinite, kilincc2025strength, rendl1997semidefinite, Sturm.Zhang.2003, YangBurer2016}. These examples illustrate a
broader theme: special geometric structure can sometimes turn a
semi-infinite collection of valid quadratic inequalities into a small conic
system. They also show that exactness is sensitive to how the defining
constraints interact.

In this paper, we study the set
\[
    \Fset
    :=
    \{\vx\in\R^2\mid x_1\geq0,\ x_2\geq0,\ x_1x_2\leq1\}.
\]
The set \(\Fset\) is a basic non-compact model of an upper bound on the product
of two nonnegative quantities. Product bounds arise explicitly in bilinear
models and can also be generated by range-reduction procedures in global
optimization \cite{AnstreicherBurerPark2021}. They also occur as local
structures in bipartite bilinear programs \cite{chen2022tightening}.
Unlike the classical bounded McCormick setting, \(\Fset\) retains the
nonnegativity lower bounds but imposes no finite upper bound on either
variable. It therefore isolates the lifted geometry created by the
interaction between nonnegativity and a product upper bound, without the
additional RLT consequences generated by finite individual upper bounds.

Despite its elementary two-dimensional description, the exact lifted convex hull of $\Fset$ is not covered by any existing general results to our knowledge. Joyce and Yang \cite{JoyceYang2024} showed that, under a non-intersecting assumption, adding a quadratic inequality to a set with known lifted convex hull amounts to intersecting that hull with the linearization of the added inequality. A natural decomposition of $\Fset$ is
$
    \Fset
    =
    \R_+^2
    \cap
    \{\vx\in\R^2\mid x_1x_2\leq1\}.
$
However, the boundary \(x_1x_2=1\) has a branch in the third quadrant, which is not contained in \(\mathbb{R}_{+}^{2}\), so the required non-intersecting condition fails. Indeed, Joyce and Yang
\cite{JoyceYang2024} explicitly identify \(\Fset\) as a simple but interesting example beyond the reach of their result. 

Complementary evidence of this difficulty comes from the convexification of bounded bilinear products. Belotti et al. \cite{belotti2010valid} show that, even in the graph space \((x_1,x_2,z)\) with \(z=x_1x_2\), imposing a nontrivial bound on \(z\) generates an infinite family of linear valid inequalities beyond the standard McCormick or RLT inequalities. Recently, Anstreicher et al. \cite{AnstreicherBurerPark2021} show that the infinite family can be represented equivalently by a second-order-cone constraint. They also identify the extension of known convex-hull descriptions for the complete five-variable quadratic system $(x_1,x_2,x_1^2,x_1x_2,x_2^2)$ to incorporate bounds on \(x_1x_2\) as an interesting direction for further research. The set \(\cC(\Fset)\) addresses a complementary unbounded regime of precisely this interaction: it retains the nonnegativity lower bounds and an upper bound on the product, but imposes no finite upper bounds on the individual variables. Consequently, its description requires not only convexifying the bilinear graph but also
characterizing how the product moment \(X_{12}\) couples with \(x_1,x_2,X_{11}\), and \(X_{22}\).

The significance of \(\cC(\Fset)\) is not limited to this two-variable
model. Let \(G\subseteq\R^n\) be the feasible region of a larger QCQP, and
suppose that, for some \(i\neq j\) and \(\tau>0\),
\[
    x_i\geq0,
    \qquad
    x_j\geq0,
    \qquad
    x_ix_j\leq\tau
    \qquad
    \text{for every }\vx\in G.
\]
For a lifted matrix \(Y\), let \(Y[\{0,i,j\}]\) denote the principal
submatrix indexed by the homogenizing coordinate and coordinates \(i\) and
\(j\). Then every \(Y\in\cC(G)\) satisfies
\[
    D_\tau Y[\{0,i,j\}]D_\tau\in\cC(\Fset),
    \quad \text{where} \quad
    D_\tau
    :=
    \mathrm{diag}\bigl(1,\tau^{-1/2},\tau^{-1/2}\bigr).
\]
This inclusion holds for every rank-one feasible lift and is preserved under
convex combinations and limits. Consequently, every affine inequality valid
for \(\cC(\Fset)\) pulls back to a valid inequality involving only
\((x_i,x_j,X_{ii},X_{ij},X_{jj})\) in the lifted formulation of \(G\). These
local constraints can be imposed for every available product bound and
combined with box-dependent McCormick or RLT inequalities whenever finite
variable bounds are also available. Although such pairwise constraints need
not describe \(\cC(G)\) exactly when additional variables create further
coupling, they provide reusable strengthenings that capture the complete
lifted geometry of each selected product-bound substructure.

Our analysis uses the dual viewpoint of nonnegative quadratic functions. For
\(R\in\Smat^2\), \(\vr\in\R^2\), and \(p\in\R\), let
\begin{equation}\label{eq:q-def-intro}
    q_{R,\vr,p}(\vx)
    :=
    \vx^\top R\vx+2\vr^\top\vx+p,
\end{equation}
and define
\[
    \Ncone(\Fset)
    :=
    \Set{\begin{pmatrix}
        p & \vr^\top\\\vr & R
    \end{pmatrix}|q_{R,\vr,p}(\vx)\geq0
    \quad\forall\,\vx\in\Fset}.
\]
In this paper, we may either us notation $\begin{pmatrix}
        p & \vr^\top\\\vr & R
    \end{pmatrix}\in\Ncone(\Fset)$ to emphasize the coefficients of a quadratic nonnegative on $F$, or $q_{R,\vr,p}\in \Ncone(\Fset)$ to emphasize the a nonnegative quadratic of interest.
Every member of \(\Ncone(\Fset)\) gives the valid lifted inequality
$
    R\mathbin{\bullet}X+2\vr^\top\vx+p\geq0
$
for any $\vx$ and $X$ described by $\cC(\Fset)$, 
and every affine inequality valid for \(\cC(\Fset)\) arises in this way. To
state the duality precisely, define the homogenized moment cone
\begin{align}
    \label{eq:KF}
    \mathcal K(\Fset)
    :=
    \overline{\cone}\left\{
        \begin{pmatrix}1\\ \vx\end{pmatrix}
        \begin{pmatrix}1\\ \vx\end{pmatrix}^{\!\top}
        \;\middle|\;
        \vx\in\Fset
    \right\}.
\end{align}
One has
\begin{align}
\label{eq:KNC_relations}    
    \mathcal K(\Fset)^*=\Ncone(\Fset),
    \qquad
    \mathcal K(\Fset)=\Ncone(\Fset)^*,
    \qquad
    \cC(\Fset)
    =
    \mathcal K(\Fset)\cap\{Y\mid Y_{00}=1\}.
\end{align}
Thus the extreme rays of \(\Ncone(\Fset)\) identify the irreducible sources
of valid inequalities for \(\cC(\Fset)\).

The following extreme-ray classification is the common structural foundation
for the two principal consequences of the paper: the exact lifted
convexification and the polynomial sum-of-squares result. Its proof is deferred to Section~\ref{sec:NF}.

\begin{theorem}\label{thm:main}
The extreme rays of \(\Ncone(\Fset)\) are precisely the rays spanned by the
following quadratics. Specifically, \(\Ncone(\Fset)\) is the conic hull of the following extreme rays:
    \begin{enumerate}
    \item The interior affine-square rays: \(\ell(\vx)^2\), where \(\ell\)
    is a nonconstant affine function whose zero set meets
    \(\operatorname{int}(\Fset)\).

    \item The axis-square rays: \(x_1^2\) and \(x_2^2\).

    \item The coordinate rays: \(x_1\), \(x_2\), and \(x_1x_2\).

    \item The hyperbola ray: \(1-x_1x_2\).

    \item The lifted tangent rays:
    \[
        h_w(\vx)
        :=
        x_1+w^2x_2-2wx_1x_2,
        \qquad w>0.
    \]

    \item The bitangent rays:
    \begin{align*}
        g^1_{u,v}(\vx)
        &:=(x_1-u)^2
        +(v-u)x_2\bigl(2v^2-(3v-u)x_1\bigr),
        &&0\leq u<v,\\
        g^2_{u,v}(\vx)
        &:=(x_2-u)^2
        +(v-u)x_1\bigl(2v^2-(3v-u)x_2\bigr),
        &&0\leq u<v.
    \end{align*}
\end{enumerate}
\end{theorem}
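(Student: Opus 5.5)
The plan is to split $\Ncone(\Fset)$ according to whether the quadratic part $R$ is PSD, as the abstract announces, and then, for each branch, show that every member decomposes into the listed rays and that each listed ray is extreme. Since $\Fset$ is closed, has nonempty interior and contains $\R_+^2$ directions, $\Ncone(\Fset)$ is a closed pointed cone, so the Krein--Milman/Minkowski theorem for closed pointed cones lets us work with extreme rays. If $q=q_{R,\vr,p}$ is extreme, positive homogeneity at infinity along the recession directions $(t,0)$ and $(0,t)$ forces $R_{11}\ge0$ and $R_{22}\ge0$.

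\textbf{PSD branch.} If $\begin{pmatrix}p&\vr^\top\\ \vr&R\end{pmatrix}$ is PSD, then $q$ is a sum of squares of affine functions. An extreme $q$ is then a single square $\ell^2$. If the zero line of $\ell$ meets $\operatorname{int}(\Fset)$, then $\ell^2$ is extreme: any decomposition $\ell^2=q'+q''$ forces $q',q''$ to vanish to second order on a segment of that line inside the interior, hence to be multiples of $\ell^2$. If the line avoids the interior, $\ell$ has a constant sign on $\Fset$, and I would show $\ell^2$ splits into rays from items 2--4, or into a PSD square plus a positive multiple of a nonnegative linear form. For example, $x_1^2$ and $x_2^2$ are extreme, whereas $(x_1+c)^2$ with $c>0$ equals $x_1^2+2c\,x_1+c^2$. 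The cases where $R$ is PSD but the full matrix is not are reduced to the non-PSD branch, because then $q$ has a negative value off $\Fset$, so its nonnegativity on $\Fset$ is governed by the boundary.

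\textbf{Non-PSD branch (boundary reduction).} Here $q$ is negative somewhere in $\R^2\setminus\Fset$. I would argue that $q$ has no interior zero at which it is negative nearby. More precisely, $q\ge0$ on $\Fset$ is equivalent to $q\ge0$ on $\partial\Fset$ together with the behaviour at infinity, whenever $q$ is not convex on the relevant lines, because a quadratic restricted to a line through an interior point that is indefinite or concave attains its minimum on a segment at the endpoints. The boundary consists of the two half-axes and the hyperbola arc $x_2=1/x_1$. Nonnegativity there becomes three univariate conditions:
\begin{enumerate}
\item $R_{11}t^2+2r_1t+p\ge0$ for $t\ge0$;
\item $R_{22}t^2+2r_2t+p\ge0$ for $t\ge0$;
\item $t^2q(t,1/t)=R_{11}t^4+2r_1t^3+(p+2R_{12})t^2+2r_2t+R_{22}\ge0$ for $t>0$.
\end{enumerate}
An extreme $q$ must be "tight": no listed ray can be subtracted from it. I would then classify extremes by their contact set, meaning the zeros on the three boundary pieces together with zeros at infinity (leading coefficients vanishing). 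Each zero imposes one or two linear conditions (a double root on the interior of an arc costs two), and extremality in the six-dimensional coefficient space requires the contacts to pin $q$ down up to scaling, i.e.\ five independent linear conditions. Enumerating contact patterns then gives the candidates:
\begin{itemize}
\item a double contact on the hyperbola together with contacts at the origin and infinity gives $h_w$, which touches the hyperbola at $x_1=w$ and vanishes at the origin;
\item a double contact on one axis, at $(u,0)$, plus a double tangency to the hyperbola gives the bitangent families $g^{1}_{u,v}$ and $g^2_{u,v}$, where $v$ parametrizes the tangency point;
\item degenerate patterns give $x_1$, $x_2$, $x_1x_2$ and $1-x_1x_2$.
\end{itemize}
For each candidate I would verify nonnegativity directly, for instance by writing the quartic in item 3 as a square times a nonnegative factor, and prove extremality by checking that the contact conditions have a one-dimensional solution space.

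\textbf{Main obstacle.} The hardest step is the completeness of the contact classification in the non-PSD branch. I must show that every boundary-nonnegative $q$ with a contact pattern not in the list can be written as a sum of two non-proportional members of $\Ncone(\Fset)$. I would handle this by perturbation: if the linear contact conditions leave a solution space of dimension at least two, choose a direction $d$ inside it and show that $q\pm\varepsilon d$ stays nonnegative on the boundary for small $\varepsilon$. This needs a careful uniform argument at infinity and near the double roots of the quartic. The delicate subcase is the one in which the quartic has a double root while the axis quadratics have boundary roots at $t=0$; there I would argue by explicit parametrization in terms of $(u,v)$.
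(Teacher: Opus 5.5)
You verify extremality of each listed ray by showing that the contact conditions have a one-dimensional solution space. That part is sound and essentially what the paper does. Splitting off the globally nonnegative $q$ (Gram matrix PSD) is also a legitimate alternative split.

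The gap is completeness, which is the real content of the theorem, and your proposal only asserts it. Your rule ``extremality requires five independent linear contact conditions'' amounts to the following claim: whenever the space $V_q$ of directions satisfying the contact conditions has dimension at least two, some $d\in V_q$ not proportional to $q$ satisfies $q\pm\varepsilon d\in\Ncone(\Fset)$. Proving that requires three things.
(a) Conditions must be counted by true order of vanishing. A root of order four of $P_q$ imposes four conditions on $d$, not two. A zero of $A^i_q$ at the endpoint $t=0$ imposes one or two. The ends $t\to0^+$ and $t\to\infty$ of each boundary piece give \emph{inequality} constraints such as $R_{11},R_{22},r_1,r_2\ge0$.
(b) You need a uniform bound $|d|\le Cq$ on $\partial\Fset$, near every contact and at infinity.
(c) You need a boundary reduction for the perturbed polynomials.

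You leave (a)--(b) as the ``main obstacle''. For (c), the mechanism you cite (minimum at the segment endpoints on lines where $q$ is concave or indefinite) says nothing about convex $q$. Boundary nonnegativity does not imply membership for convex quadratics: $(x_1-\tfrac12)^2+(x_2-\tfrac12)^2-\tfrac1{10}$ is positive on $\partial\Fset$ but negative at $(\tfrac12,\tfrac12)$. Your reduction of the case ``$R\succeq0$, Gram matrix not PSD'' therefore needs a different argument. Fix $y\notin\Fset$ with $q(y)<0$. Take a convex $q'$ with $q'(y)<0$ and $q'\ge0$ on $\partial\Fset$. A negative value of $q'$ at some $x\in\operatorname{int}(\Fset)$ would, by convexity along the segment from $x$ to $y$, give a negative value on $\partial\Fset$.

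Even granting a perturbation lemma, ``enumerating contact patterns then gives the candidates'' is the theorem itself, and none of the enumeration is carried out. You would have to show all of the following:
\begin{itemize}
\item $R_{12}>\sqrt{R_{11}R_{22}}$ forces $x_1x_2$;
\item a non-convex extreme $q$ cannot have two hyperbola contacts (the paper writes such $q$ as $\lambda(x_1-(u+v)+uvx_2)^2+\delta(1-x_1x_2)$ with $\delta>4\lambda uv$);
\item it has exactly one axis contact and one hyperbola contact;
\item an open-axis contact yields $g^1_{u,v}$ or $g^2_{u,v}$, via $q=(1-\gamma/(v-u)^2)(x_1-u-v(v-u)x_2)^2+(\gamma/(v-u)^2)\,g^1_{u,v}$ with $0<\gamma\le(v-u)^2$;
\item the face of quadratics vanishing at the origin and at $(v,v^{-1})$ is a cone over a quadrilateral with vertices $h_v$, $g^1_{0,v}$, $g^2_{0,1/v}$ and $(x_1-v^2x_2)^2$.
\end{itemize}

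Your non-PSD branch also contains every convex $q$ that is not globally nonnegative, such as $x_1$ or $(x_1-2)^2+(x_2-2)^2-2=(x_1+x_2-2)^2+2(1-x_1x_2)$. The paper removes all of these beforehand. It shows that a convex member of $\Ncone(\Fset)$ is nonnegative on $\{x_1x_2\le1\}$ or on $\R_+^2$. It then applies the S-lemma in the first case and Diananda's decomposition of $3\times3$ copositive matrices in the second. The paper also never needs your domination estimate (b). Its only perturbations, $\pm\varepsilon(1-x_1x_2)$ and $\pm\varepsilon x_1x_2$, vanish identically on $\mathcal H$ and on the axes respectively. Every other reduction in the paper is an explicit conic decomposition.
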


The extreme-ray classification is itself our first contribution. Its proof
separates the cases \(R\succeq0\) and \(R\not\succeq0\) and, in the latter
case, reduces extremality to a small collection of boundary-contact
configurations. Within the complete dual cone, the classification recovers,
as extreme rays, the lifted tangent inequalities known from \cite{belotti2010valid}. It also identifies two symmetric two-parameter families. We call them \emph{bitangent rays}, because each of them has second-order contact with a coordinate-axis boundary component and with the positive hyperbola. 

Our second contribution is a complete description of \(\cC(\Fset)\). Within the Shor--RLT system, the inequalities generated by the lifted tangent rays reduce to the known second-order-cone constraint $X_{12}^2\leq x_1x_2$ in \cite{AnstreicherBurerPark2021}.
The bitangent rays provide the missing inequalities that capture the coupling between the product moment \(X_{12}\) and one of the diagonal moments \(X_{11}\) and \(X_{22}\). Although the bitangent inequalities form two continuously parameterized families, they admit fixed-size lifted conic representations. Consequently,
\(\cC(\Fset)\) admits an explicit semidefinite extended
formulation with two scalar auxiliary variables. Moreover, apart from the
standard Shor positive-semidefinite block, all constraints in the formulation
are linear or second-order-cone representable.

We also have a few interesting new results from the algebraic point of view. A polynomial reparameterization
identifies \(\Ncone(\Fset)\) with \(\mathcal H\cap\Poct_{3,8}\), where
\(\mathcal H\) is a six-dimensional linear subspace of the space of ternary
octic forms and \(\Poct_{3,8}\) is the cone of nonnegative ternary octics.
The sum-of-squares (SOS) cone \(\Sigma_{3,8}\), consisting of sums of squares of ternary
quartic forms, is a tractable inner approximation of \(\Poct_{3,8}\).
Hilbert's theorem \cite{hilbert1888darstellung} gives the strict inclusion
$
    \Sigma_{3,8}\subsetneq\Poct_{3,8}.
$
Nevertheless, we prove that the two cones coincide on \(\mathcal H\):
\[
    \mathcal H\cap\Poct_{3,8}
    =
    \mathcal H\cap\Sigma_{3,8}.
\]
Different from a previous exceptional result \cite{Harris1999} in which $\mathcal H$ is the four-dimensional subspace of even symmetric homogeneous polynomials, in our result, nonnegativity and sums of squares coincide on a six-dimensional,
coordinate-wise-even but generally non-permutation-symmetric section of the
ternary-octic cone. In fact, although we have presented the problem from the viewpoint of lifted convexification, our research that yields this paper originated from this SOS analysis. In addition to the above nonnegative and SOS cone results, we also show that our characterization of $\Ncone(\Fset)$ yields a bounded-degree preordering certificate for nonnegative polynomials over $F$ and also a bounded-degree certificate for certain quartics on the half-strip \([0,1]\times[0,\infty)\).

This paper is organized as follows. Section~\ref{sec:CF} derives the exact conic representation of
\(\cC(\Fset)\) from Theorem~\ref{thm:main}.
Section~\ref{sec:poly-consequences} uses the same classification to prove the
nonnegative-octic/SOS identity and some bounded-degree certificates.
Section~\ref{sec:NF} proves the extreme-ray classification.
Section~\ref{sec:conclusion} concludes the paper.

\section{Main results: valid inequalities and a semidefinite representation of
\texorpdfstring{$\cC(\Fset)$}{C(F)}} \label{sec:CF}

In this section, we discuss the valid inequalities based on the list of extreme rays enumerated in Theorem \ref{thm:main}. We will link the enumerated extreme rays to previously known lifted convex hull results related to our set of interest $\Fset$, and show that the class of all bitangent rays is the only remaining piece needed to complete the picture: once all the bitangent ray cuts are added, the semidefinite representation of $\cC(\Fset)$ is readily available.

\subsection{Valid inequalities}
A straightforward relaxation of $\cC(\Fset)$ is Shor-RLT:
\begin{align}
\label{eq:CSR}
	\cC_{Shor-RLT}(\Fset) = \left\{
    \mxX 
    \succeq 0
	\ \middle| x_1, x_2\ge 0, 0\le X_{12}\le 1\right\}.
\end{align}
Here $x_1,x_2\ge 0$ and $X_{12}\le 1$ are valid inequalities induced by Shor relaxation, and $X_{12}\ge 0$ is that by RLT relaxation. 
It is straightforward to observe that other than lifted tangent and bitangent rays, all the extreme rays listed in Theorem \ref{thm:main} are associated with the valid inequalities induced Shor-RLT. Indeed, we have

\begin{theorem}\label{thm:exact-families}
The lifted convex hull is
\begin{equation}\label{eq:exact-family-description}
\begin{aligned}
\cC(\Fset)=&\biggl\{\mxX\in\cC_{Shor-RLT}(\Fset)| 
\\
&\qquad H_w(\vx, X), G_{u,v}^{1}(\vx, X), G_{u,v}^{2}(\vx, X) \ge 0,\ \forall w>0, 0\leq u<v\biggr\}
\end{aligned}
\end{equation}
where the linear lifting functions 
\begin{align}
    H_w(\vx, X):= & x_1+w^2x_2-2wX_{12}, 
    \\
    G^{1}_{u,v}(\vx, X) := & X_{11} - 2ux_1 + u^2 +2v^2(v-u)x_2 - (v-u)(3v-u)X_{12},\text{ and }
    \\
    G^{2}_{u,v}(\vx, X):= & X_{22}-2ux_2+u^2+2v^2(v-u)x_1
-(v-u)(3v-u)X_{12}
\end{align}
are associated with extreme lifted tangent and bitangent rays.
\end{theorem}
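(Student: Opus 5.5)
The plan is to combine the duality relations \eqref{eq:KNC_relations} with the extreme-ray classification of Theorem~\ref{thm:main}. Since $\mathcal K(\Fset)=\Ncone(\Fset)^*$ and $\cC(\Fset)=\mathcal K(\Fset)\cap\{Y_{00}=1\}$, a matrix $Y$ with $Y_{00}=1$ lies in $\cC(\Fset)$ if and only if $M\bullet Y\ge 0$ for every $M\in\Ncone(\Fset)$. By Theorem~\ref{thm:main}, $\Ncone(\Fset)$ is the conic hull of the listed extreme rays. Hence it suffices to impose $M\bullet Y\ge0$ for $M$ ranging over these rays. For each ray $q_{R,\vr,p}$, the pairing with $Y$ is exactly the linearization $R\bullet X+2\vr^\top\vx+p$.

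The first inclusion, ``$\subseteq$'', is direct. Every listed quadratic is nonnegative on $\Fset$, so its linearization is nonnegative at every rank-one lift $(1,\vx)(1,\vx)^\top$ with $\vx\in\Fset$. Linearizations are linear in $Y$, so this persists under convex combinations and closure. This gives the Shor--RLT constraints and $H_w,G^1_{u,v},G^2_{u,v}\ge0$. Linearizing $h_w$, $g^1_{u,v}$, and $g^2_{u,v}$ by the rules $x_ix_j\mapsto X_{ij}$ and $x_i^2\mapsto X_{ii}$ produces exactly the stated $H_w$, $G^1_{u,v}$, and $G^2_{u,v}$. For instance, $(x_1-u)^2+(v-u)x_2(2v^2-(3v-u)x_1)$ linearizes to $X_{11}-2ux_1+u^2+2v^2(v-u)x_2-(v-u)(3v-u)X_{12}$.

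For ``$\supseteq$'', I will check that each ray in families 1--4 of Theorem~\ref{thm:main} yields an inequality implied by $\cC_{Shor-RLT}(\Fset)$.
\begin{itemize}
\item \emph{Family 1 and family 2 (affine squares and axis squares).} For $\ell(\vx)=a+\mathbf b^\top\vx$, the linearization of $\ell^2$ equals $(a,\mathbf b^\top)Y(a,\mathbf b^\top)^\top\ge0$, which follows from $Y\succeq0$. The axis squares $x_1^2,x_2^2$ are the special cases $\ell=x_i$, giving $X_{ii}\ge0$.
\item \emph{Family 3 (coordinate rays).} The rays $x_1$, $x_2$, and $x_1x_2$ give $x_1,x_2\ge0$ and $X_{12}\ge0$, which appear explicitly in \eqref{eq:CSR}.
\item \emph{Family 4 (hyperbola ray).} The ray $1-x_1x_2$ gives $X_{12}\le1$.
\end{itemize}

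Now take $Y$ in the right-hand set and any $M\in\Ncone(\Fset)$. I write $M$ as a finite conic combination of extreme rays, which is possible because the cone is closed, pointed, and finite-dimensional, so Minkowski/Carathéodory applies. Every term then has nonnegative pairing with $Y$, so $M\bullet Y\ge0$, and therefore $Y\in\mathcal K(\Fset)$ with $Y_{00}=1$.

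The only delicate point is whether $\Ncone(\Fset)$ is really generated by its extreme rays, which needs pointedness. Pointedness holds because $\Fset$ has nonempty interior: if $q\ge0$ and $-q\ge0$ on $\Fset$, then $q$ vanishes on an open set and so $q\equiv0$. The statement of Theorem~\ref{thm:main} already asserts the conic-hull representation, so I would simply invoke it. Beyond that, the work is the routine linearization bookkeeping above.
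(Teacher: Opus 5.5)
Your proposal is correct and follows the paper's route. The paper's one-line proof combines the duality relations \eqref{eq:KNC_relations} with the conic-hull statement of Theorem~\ref{thm:main}, with families 1--4 absorbed by the Shor--RLT constraints. You spell out explicitly the linearization bookkeeping and the pointedness/Carath\'eodory step that the paper leaves as ``immediate.''
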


\begin{proof}
    This is an immediate consequence of the relations stated in \eqref{eq:KNC_relations}.
\end{proof}

In the sequel, we will say that the lifted tangent valid inequalities are linear inequalities of form $H_w(\vx)\ge 0$ and the bitangent valid inequalities are that of form $G^{1}_{u,v}(\vx, X)\ge 0$ or $G^{2}_{u,v}(\vx, X)\ge 0$. Here the lifted tangent valid inequalities have been studied in the literature; in fact, there has been a known result \cite{AnstreicherBurerPark2021} (see also \cite{belotti2010valid}) on characterizing a tighter cone $\cC_{H}(\Fset)$, which is the intersection of $\cC_{Shor-RLT}(\Fset)$ with all lifted tangent valid inequalities:
\begin{align}
    \label{eq:c1}
\cC_{H}(\Fset)=&\Set{\mxX\in\cC_{Shor-RLT}(\Fset)| H_w(\vx, X) \ge 0,\ \forall w>0}.
\end{align}
For completeness, we state the known result using our notation and include the proof below.

\begin{proposition}
\label{prop:family-I-soc}
We have
\begin{align}
    \cC_{H}(\Fset)=&\Set{\mxX\in\cC_{Shor-RLT}(\Fset)| \begin{pmatrix}x_1&X_{12}\\X_{12}&x_2\end{pmatrix}\succeq 0}.
\end{align}
\end{proposition}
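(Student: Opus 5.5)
The proof amounts to showing that, for a matrix already in $\cC_{Shor-RLT}(\Fset)$, the family $H_w(\vx,X)\ge 0$ for all $w>0$ is equivalent to the $2\times 2$ semidefinite condition
\[
\begin{pmatrix}x_1&X_{12}\\X_{12}&x_2\end{pmatrix}\succeq 0 .
\]
The Shor--RLT constraints supply $x_1,x_2\ge 0$ and $X_{12}\ge 0$. So the $2\times 2$ condition is equivalent to $x_1,x_2\ge 0$ together with $X_{12}^2\le x_1x_2$, and we only need to relate $X_{12}^2\le x_1x_2$ to the one-parameter family $x_1+w^2x_2-2wX_{12}\ge 0$, $w>0$.

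For the inclusion ``$\supseteq$'', assume the $2\times 2$ matrix is PSD and evaluate its quadratic form at the vector $(1,-w)^\top$. This gives
\[
(1,-w)\begin{pmatrix}x_1&X_{12}\\X_{12}&x_2\end{pmatrix}\begin{pmatrix}1\\-w\end{pmatrix}=x_1-2wX_{12}+w^2x_2=H_w(\vx,X)\ge 0
\]
for every $w>0$.

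For the inclusion ``$\subseteq$'', assume $H_w\ge 0$ for all $w>0$ and split into cases.
\begin{itemize}
\item If $x_2>0$, choose the minimizing value $w=X_{12}/x_2$. When $X_{12}>0$ this $w$ is positive, and $H_w\ge 0$ becomes $x_1-X_{12}^2/x_2\ge 0$, i.e.\ $X_{12}^2\le x_1x_2$. When $X_{12}=0$ the inequality $X_{12}^2\le x_1x_2$ is trivial because $x_1,x_2\ge 0$.
\item If $x_2=0$, then $H_w=x_1-2wX_{12}\ge 0$ for all $w>0$. Letting $w\to\infty$ forces $X_{12}\le 0$, and with $X_{12}\ge 0$ we get $X_{12}=0$. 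Hence $X_{12}^2=0=x_1x_2$ and the matrix is PSD.
\end{itemize}
Combining this with $x_1,x_2\ge 0$ gives the PSD condition.

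The only real care needed is this degenerate boundary case $x_2=0$, where the minimizer $w=X_{12}/x_2$ is undefined. There, positivity of $w$ together with the RLT sign constraint $X_{12}\ge 0$ is exactly what substitutes for the unrestricted-$w$ argument. Since the two inclusions hold within $\cC_{Shor-RLT}(\Fset)$, the two descriptions coincide, which is the claimed identity for $\cC_{H}(\Fset)$.
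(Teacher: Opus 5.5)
Your proof is correct and takes essentially the paper's route: both reduce to the same three-variable identity, and both use the RLT sign $X_{12}\ge 0$ to pass from the restricted family ($w>0$) to full positive semidefiniteness. The paper does this in one line by observing that the family says $\begin{pmatrix}x_1&-X_{12}\\-X_{12}&x_2\end{pmatrix}$ is copositive, and that a $2\times 2$ copositive matrix with nonpositive off-diagonal entry is PSD; your explicit minimizer $w=X_{12}/x_2$, together with the separate $x_2=0$ case, is simply a direct verification of that fact.
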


\begin{proof}
If suffices to prove that
\begin{align}
\label{eq:family-I-det}
\begin{aligned}
    & \Set{(x_1,x_2,X_{12})| X_{12}\ge 0, x_1+w^2x_2-2wX_{12}\ge 0,\ \forall w>0} 
    \\
    = & \Set{(x_1,x_2,X_{12})| x_1,x_2,X_{12}\ge 0, X_{12}^2\leq x_1x_2}.
\end{aligned}
\end{align}
The second-order-cone characterization of $\cC_{H}(\Fset)$ will follow immediately. Note that $x_1+w^2x_2-2wX_{12}\ge 0$ for all $w>0$ if and only if $\begin{pmatrix}
    x_1 & -X_{12} \\ -X_{12} & x_2
\end{pmatrix}$ is in the copositive cone, which coincides with the two-dimensional semidefinite cone in the half space $-X_{12}\le 0$. Hence \eqref{eq:family-I-det} holds.
\end{proof}

\subsection{A semidefinite representation of \texorpdfstring{$\cC(\Fset)$}{C(F)}}\label{subsec:exact-sdr}

In order to characterize $\cC(\Fset)$, the family of bitangent rays is the only missing piece. 
We start by adding infinitely many bitangent cuts of form $G_{u,v}^1$ to $\cC_H(\Fset)$ and characterize the cone
\begin{align}
\cC_{H,G^{1}}(\Fset)=&\Set{\mxX\in\cC_{H}(\Fset)| G_{u,v}^{1}(\vx, X)\ge 0,\ \forall 0\leq u<v}.    
\end{align}
We will show in the proposition below that $\cC_{H,G^{1}}(\Fset)$ is semidefinte representable with one shadow variable.

\begin{proposition}
\label{prop:family-II-sdr}
We have
\begin{align}
    \label{eq:CHG1_sdr}
    \cC_{H,G^{1}}(\Fset)=&\Set{\mxX\in\cC_{H}(\Fset)| \exists s\text{ s.t. }\begin{pmatrix}x_2&X_{12}\\X_{12}&s\end{pmatrix}\succeq0,
\begin{pmatrix}
X_{11}&s&x_1-s\\
s&X_{12}&0\\
x_1-s&0&1-X_{12}
\end{pmatrix}\succeq0}.
\end{align}
\end{proposition}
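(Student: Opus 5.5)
The plan is to read both sides of \eqref{eq:CHG1_sdr} as one scalar condition of the form ``$X_{11}$ is at least a certain function of $(x_1,x_2,X_{12})$''. I would then show that the semi-infinite family $G^1_{u,v}\ge0$ and the existence of $s$ describe the same function, via a Lagrangian/minimax argument in the auxiliary variable $s$.

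First, for $\mxX\in\cC_H(\Fset)$ we have $x_2\ge0$ and $0\le X_{12}\le1$. Assume first the generic case $x_2>0$ and $0<X_{12}<1$. Taking Schur complements, the first block is equivalent to $s\ge X_{12}^2/x_2$. The second block is equivalent to
\[
X_{11}\ \ge\ f(s):=\frac{s^2}{X_{12}}+\frac{(x_1-s)^2}{1-X_{12}}.
\]
So the right-hand side of \eqref{eq:CHG1_sdr} says $X_{11}\ge \min\{f(s): s\ge X_{12}^2/x_2\}$. The boundary cases $x_2=0$ (which forces $X_{12}=0$ by Proposition~\ref{prop:family-I-soc}), $X_{12}=0$ and $X_{12}=1$ I would treat separately at the end, by the usual convention for quadratic-over-linear functions, or by a limiting argument using that both sides are closed sets.

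Next, I would write each convex piece through its Fenchel representation:
\begin{align}
\frac{s^2}{X_{12}}&=\sup_a\{2as-a^2X_{12}\},\\
\frac{(x_1-s)^2}{1-X_{12}}&=\sup_b\{2b(x_1-s)-b^2(1-X_{12})\},\\
\frac{X_{12}^2}{x_2}&=\sup_w\{2wX_{12}-w^2x_2\}.
\end{align}
The constraint $s\ge X_{12}^2/x_2$ gets a multiplier $\lambda\ge0$. The Lagrangian is affine in $s$, so the dual forces $\lambda=2(a-b)\ge0$. The dual value is then
\[
\sup_{a\ge b,\ w}\Bigl\{2bx_1-b^2+X_{12}\bigl(b^2-a^2+4(a-b)w\bigr)-2(a-b)w^2x_2\Bigr\}.
\]
With $b=u$ and $a=w=v$ this expression is exactly
\[
2ux_1-u^2+(v-u)(3v-u)X_{12}-2v^2(v-u)x_2,
\]
that is, $X_{11}-G^1_{u,v}$. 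This identity is the algebraic heart of the proof.

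The direction ``$\exists s\Rightarrow G^1_{u,v}\ge0$'' then follows by weak duality. Concretely, I would exhibit $G^1_{u,v}$ as a nonnegative combination of Frobenius products of the two PSD blocks with fixed PSD matrices:
\begin{itemize}
\item pair the $3\times3$ block with $(1,-v,-u)(1,-v,-u)^{\top}$;
\item pair the $2\times2$ block with $2(v-u)(v,-1)(v,-1)^{\top}$.
\end{itemize}
I would then check that $s$ cancels and that the residual matches $G^1_{u,v}$. This is a routine expansion.

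For the converse I need strong duality, and I need the supremum to be attained, or approached, within the restricted family $a=w=v\ge b=u\ge0$. The step I expect to be the main obstacle is this reduction from three free dual parameters to the two parameters $(u,v)$ with $0\le u<v$. I would handle it in three steps:
\begin{itemize}
\item Optimize explicitly over $w$ for fixed $a,b$, giving $w=X_{12}/x_2$ when $a>b$.
\item Show that at the dual optimum, stationarity in $s$ and complementary slackness give $a=s^*/X_{12}$ and $b=(x_1-s^*)/(1-X_{12})$. When the constraint is active, $s^*=X_{12}^2/x_2$, so $a=X_{12}/x_2=w$. When it is inactive, $a=b$ and the term reduces to $2ux_1-u^2$, which is the limit $v\downarrow u$ of the family.
\item Show $b\ge0$ can be assumed, because $x_1\ge0$ makes negative $b$ dominated by $b=0$.
\end{itemize}
Strong duality itself holds by Slater, since $s$ large is strictly feasible. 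The optimal $s^*$ is the projection of the unconstrained minimizer $x_1X_{12}$ onto $[X_{12}^2/x_2,\infty)$.

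Finally, the degenerate cases are handled by continuity. The inequality $G^1_{u,v}\ge0$ is closed in the data. On the SDP side, the set of $(\vx,X)$ admitting some $s$ is closed, because $s$ is bounded: $0\le s\le \sqrt{x_2X_{12}}$ from the first block, using $X_{12}\le1$.
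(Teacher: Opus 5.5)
Your core argument matches the paper's. For the inclusion ``$\exists s\Rightarrow G^1_{u,v}\ge0$'' you use the paper's certificate: the $3\times3$ block against $(1,-v,-u)(1,-v,-u)^\top$, plus $2(v-u)$ times the $2\times2$ block against $(v,-1)(v,-1)^\top$. For the converse, your KKT values are exactly the paper's explicit choice. There $s^*$ is the projection of $x_1X_{12}$ onto $[X_{12}^2/x_2,\infty)$, and in the active case $v=a=w=X_{12}/x_2$ and $u=b=(x_1x_2-X_{12}^2)/(x_2(1-X_{12}))$, for which one checks directly that $G^1_{u,v}=X_{11}-f(s^*)$. Your Lagrangian/Fenchel derivation explains where these values come from, which the paper does not. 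Once you have them, however, Slater and strong duality are unnecessary. In the inactive case ($x_1x_2\ge X_{12}$) you also do not need the limit $v\downarrow u$, since $f(x_1X_{12})=x_1^2\le X_{11}$ by the Shor constraint.

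Three side steps are wrong as written, though each is easy to repair.
\begin{enumerate}
\item $u=b\ge0$ does not follow from $x_1\ge0$. In the active case $b=(x_1x_2-X_{12}^2)/(x_2(1-X_{12}))$, which is nonnegative only because $X_{12}^2\le x_1x_2$, i.e.\ because the point lies in $\cC_H(\Fset)$ (Proposition~\ref{prop:family-I-soc}).
\item The first block gives no upper bound on $s$, only $s\ge X_{12}^2/x_2$; with $x_2=1$ and $X_{12}=1/2$, every $s\ge 1/4$ satisfies it. The bound you need for closedness of the projection comes from the $3\times3$ block: $s^2\le X_{11}X_{12}$.
\item Closedness of both sides does not by itself carry the generic case over to degenerate points. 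You also need every degenerate point of $\cC_{H,G^1}(\Fset)$ to be a limit of generic points of that same set. This holds by convexity: slide along the segment toward $(1,\vx_0)(1,\vx_0)^\top$ with $\vx_0\in\operatorname{int}(\Fset)$, which keeps $x_2>0$ and $0<X_{12}<1$ for every positive step.
\end{enumerate}
The paper's direct treatment is simpler. Take $s=0$ when $X_{12}=0$ (this covers $x_2=0$) and $s=x_1$ when $X_{12}=1$. Both choices are verified using only $x_2\ge0$, $X_{11}\ge x_1^2$ and $x_1x_2\ge X_{12}^2$.
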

\begin{proof}
Fix $\mxX\in\cC_H(\Fset)$. For one direction, suppose that there exists $s$ such that the linear matrix inequalities described in \eqref{eq:CHG1_sdr} holds. Then for all $v\ge u$ we have
\begin{align}
    & G^{1}_{u,v}(\vx, X) 
    \\
    = & [X_{11}  -2vs - 2u(x_1-s) + v^2X_{12} + (1-X_{12})u^2] + 2(v-u)[v^2x_2 - 2vX_{12} + s]
    \\
    = &
\begin{pmatrix}-1\\v\\u\end{pmatrix}^{\!T}
\begin{pmatrix}
X_{11}&s&x_1-s\\
s&X_{12}&0\\
x_1-s&0&1-X_{12}
\end{pmatrix}
\begin{pmatrix}-1\\v\\u\end{pmatrix}
+2(v-u)
\begin{pmatrix}v\\-1\end{pmatrix}^{\!T}
\begin{pmatrix}x_2&X_{12}\\X_{12}&s\end{pmatrix}
\begin{pmatrix}v\\-1\end{pmatrix}
\\
\ge & 0.
\end{align}
Conversely, suppose that
$G^1_{u,v}(\vx,X)\geq0$ for every $0\leq u<v$. 
By the Shor-RLT constraints and
Proposition~\ref{prop:family-I-soc}, we have
\begin{align}
\label{eq:tmp1}
0\leq X_{12}\leq1,\ x_1,x_2\geq0,\
x_1x_2 \ge X_{12}^2,\ \text{and } X_{11}\geq x_1^2.
\end{align}
We will consider the following different cases.

First, if $X_{12}=0$ or $X_{12}=1$, then the linear matrix inequalities in \eqref{eq:CHG1_sdr} are satisfied with $s=0$ and $s=x_1$ respectively due to \eqref{eq:tmp1}. 

Second, if $0<X_{12}<1$, then by \eqref{eq:tmp1} we have $x_2>0$. By Schur complements, the linear matrix inequality in \eqref{eq:CHG1_sdr} is equivalent to
\begin{align}
    \label{eq:tmp2}
    X_{11} \ge \frac{s^2}{X_{12}}+\frac{(x_1-s)^2}{1-X_{12}}\text{ and }s\ge \frac{X_{12}^2}{x_2}.
\end{align}
Note that if $x_1x_2\ge X_{12}$, then by \eqref{eq:tmp1} the above relation holds with $s = X_{12}x_1$. To finish the proof it suffices to study the case when $x_1x_2<X_{12}$. We will let
\begin{align}
    s = \frac{X_{12}^2}{x_2},\ u = \frac{x_1x_2-X_{12}^2}{x_2(1-X_{12})},\ \text{and }v = \frac{X_{12}}{x_2}.
\end{align}
Noting from $X_{12}^2\leq x_1x_2<X_{12}$ that
$
    u\ge 0$  and $v - u = {(X_{12}-x_1x_2)}/{(x_2(1-X_{12}))}>0.$
Therefore $G^1_{u,v}(\vx,X)\ge 0$. Here by direct computation we also have
\begin{align}
G^1_{u,v}(\vx,X) = 
X_{11}-\frac{s^2}{X_{12}}
-\frac{(x_1-s)^2}{1-X_{12}}.
\end{align}
Hence \eqref{eq:tmp2} holds and the linear matrix inequality in \eqref{eq:CHG1_sdr} is satisfied.
\end{proof}

A few remarks are in place. First, if $X_{11}=x_1^2$, $X_{12}=x_1x_2$, and $X_{22} = x_2^2$, then 
\begin{align}
\label{eq:s1s2}
    \det\begin{pmatrix}
 X_{11}&s&x_1-s\\
 s&X_{12}&0\\
 x_1-s&0&1-X_{12}
 \end{pmatrix} = -(s - x_1^2x_2)^2.
\end{align}
From this relation, we can observe that the auxiliary variable $s$ represents a higher-degree term $x_1^2x_2$. Indeed, the auxiliary variable $s$ has a natural interpretation from the moment matrix description of the Lasserre hierarchy \cite{Lasserre2001}. We will elaborate the relation in more detail later after Theorem \ref{thm:two-shadow-hull}. Second, apart from the Shor constraint $\mxX\succeq 0$, all other constraint in the description of \eqref{eq:CHG1_sdr} are second-order-cone representable. To see this, it suffices to observe that the last $3\times 3$ PSD constraint is second-order-cone representable:
\begin{align}
    \begin{pmatrix}
X_{11}&s&x_1-s\\
s&X_{12}&0\\
x_1-s&0&1-X_{12}
\end{pmatrix}\succeq 0\text{ if and only if }\exists \tilde s\text{ s.t. }\begin{pmatrix}X_{12}&s\\s&\tilde s\end{pmatrix}\succeq0
\text{ and }
\begin{pmatrix}1-X_{12}&x_1-s\\x_1-s&X_{11}-\tilde s\end{pmatrix}\succeq0.
\end{align}
The above relation is straightforward from a Schur-complement argument involving $1-X_{12}$.

We are now ready to describe a new semidefinite representation of $\cC(\Fset)$. 

\begin{theorem}\label{thm:two-shadow-hull}
    We have
    \begin{align}
        \cC(\Fset) = & \left\{\mxX\succeq 0\middle|   \begin{pmatrix}x_1&X_{12}\\X_{12}&x_2\end{pmatrix}\succeq 0, \text{ and } \exists s_1,s_2\in\mathbb{R} \text{ s.t. }\right.
         \\
 &\qquad\qquad\qquad\quad\begin{pmatrix}x_2&X_{12}\\X_{12}&s_1\end{pmatrix}\succeq0,
 \begin{pmatrix}
 X_{11}&s_1&x_1-s_1\\
 s_1&X_{12}&0\\
 x_1-s_1&0&1-X_{12}
 \end{pmatrix}\succeq0
         \\
     &\qquad\qquad\qquad\quad\left.\begin{pmatrix}x_1&X_{12}\\X_{12}&s_2\end{pmatrix}\succeq0,
 \begin{pmatrix}
 X_{22}&s_2&x_2-s_2\\
 s_2&X_{12}&0\\
 x_2-s_2&0&1-X_{12}
 \end{pmatrix}\succeq0\right\}.
    \end{align}
\end{theorem}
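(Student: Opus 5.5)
The plan is to assemble the result from Theorem~\ref{thm:exact-families}, Proposition~\ref{prop:family-I-soc}, and Proposition~\ref{prop:family-II-sdr}, together with its mirror image under the swap $x_1\leftrightarrow x_2$. By Theorem~\ref{thm:exact-families}, $\cC(\Fset)$ is the set of $Y\in\cC_{Shor-RLT}(\Fset)$ satisfying $H_w\ge0$, $G^1_{u,v}\ge0$ and $G^2_{u,v}\ge0$ for all admissible parameters. This set equals $\cC_{H,G^1}(\Fset)\cap\cC_{H,G^2}(\Fset)$, where $\cC_{H,G^2}(\Fset)$ is defined analogously with $G^2$ in place of $G^1$. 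Since the two families of cuts are imposed independently, a point lies in the intersection if and only if it admits a witness $s_1$ for the $G^1$ family and a witness $s_2$ for the $G^2$ family. Separate auxiliary variables suffice because the two sets of linear matrix inequalities share no auxiliary variable.

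\textbf{Step 1 (symmetry).} The map $\sigma:(x_1,x_2,X_{11},X_{12},X_{22})\mapsto(x_2,x_1,X_{22},X_{12},X_{11})$ preserves $\Fset$ and hence $\cC_{Shor-RLT}(\Fset)$. It fixes the family $H_w$ up to reparameterization, since $H_w\circ\sigma = w^2 H_{1/w}$, and it sends $G^1_{u,v}$ to $G^2_{u,v}$. Applying Proposition~\ref{prop:family-II-sdr} to $\sigma(Y)$ therefore gives the representation of $\cC_{H,G^2}(\Fset)$, with the blocks $\begin{pmatrix}x_1&X_{12}\\X_{12}&s_2\end{pmatrix}\succeq0$ and the $3\times3$ block involving $X_{22},s_2,x_2-s_2$.

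\textbf{Step 2 (removing redundant Shor--RLT constraints).} The displayed set keeps only $Y\succeq0$ and the $2\times2$ condition of Proposition~\ref{prop:family-I-soc}, so I must show that $x_1,x_2\ge0$ and $0\le X_{12}\le1$ follow from the listed LMIs.
\begin{itemize}[label={}]
\item \emph{Sign of $x_1,x_2$:} the diagonal of $\begin{pmatrix}x_1&X_{12}\\X_{12}&x_2\end{pmatrix}\succeq0$ gives $x_1,x_2\ge0$.
\item \emph{Bounds on $X_{12}$:} the diagonal of each $3\times3$ block gives $X_{12}\ge0$ and $1-X_{12}\ge0$.
\end{itemize}
Hence the displayed set coincides with $\cC_{H,G^1}(\Fset)\cap\cC_{H,G^2}(\Fset)$, as rewritten via Proposition~\ref{prop:family-I-soc} and Proposition~\ref{prop:family-II-sdr}.

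\textbf{Main obstacle.} The substantive work is already contained in Theorem~\ref{thm:main} and Proposition~\ref{prop:family-II-sdr}. What remains is bookkeeping in the symmetry step: I must verify that $\sigma$ maps $G^1_{u,v}$ exactly to $G^2_{u,v}$ over the same parameter range $0\le u<v$, and that the LMIs transform accordingly. Direct substitution into the formulas of Theorem~\ref{thm:exact-families} settles this.
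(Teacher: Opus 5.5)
Your proposal is correct and matches the paper's proof. The paper also obtains the $G^2$ representation by the $x_1\leftrightarrow x_2$ symmetry, writes $\cC(\Fset)=\cC_{H,G^{1}}(\Fset)\cap\cC_{H,G^{2}}(\Fset)$, and notes that $x_1,x_2\ge0$ and $0\le X_{12}\le1$ are implied by the semidefinite blocks; your explicit checks that $H_w\circ\sigma=w^2H_{1/w}$ and $G^1_{u,v}\circ\sigma=G^2_{u,v}$ simply fill in the bookkeeping that the paper leaves to ``by symmetry.''
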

\begin{proof}
By symmetry, if we add all cuts of form $G^2_{u,v}(\vx,X)\ge 0$ to $\cC_H(\Fset)$ and consider the cone
\begin{align}
\cC_{H,G^{2}}(\Fset)=&\Set{\mxX\in\cC_{H}(\Fset)| G_{u,v}^{2}(\vx, X)\ge 0,\ \forall 0\leq u<v},
\end{align}
then similar to Proposition \ref{prop:family-II-sdr} we have
    \begin{align}
    \label{eq:CHG2_sdr}
    \cC_{H,G^{2}}(\Fset)=&\Set{\mxX\in\cC_{H}(\Fset)| \exists s\text{ s.t. }\begin{pmatrix}x_1&X_{12}\\X_{12}&s\end{pmatrix}\succeq0,
\begin{pmatrix}
X_{22}&s&x_2-s\\
s&X_{12}&0\\
x_2-s&0&1-X_{12}
\end{pmatrix}\succeq0}.
\end{align}
We conclude the theorem immediately by noting that $\cC(\Fset) = \cC_{H,G^{1}}(\Fset)\cap \cC_{H,G^{2}}(\Fset)$ and that the inequalities $x_1, x_2\ge 0$ and $0\le X_{12}\le 1$ are implied by the semidefinite constraints.
\end{proof}

A few remarks are in place. First, as discussed after Proposition \ref{prop:family-II-sdr}, all constraints describing $\cC(\Fset)$ other than Shor are second-order-cone representable with four auxiliary variables in total. Second, the variables $s_1$ and $s_2$ have a natural interpretation in the order-two (degree-four) Lasserre moment-SOS hierarchy with a redundant constraint $x_1x_2\geq0$ \cite{Lasserre2001} (see also \cite{lasserre2018moment}). Specifically, let \(\mathcal L_y\) denote the Riesz functional associated with a feasible truncated moment sequence $y$, so that $y_{ij}=\mathcal L_y(x_1^i x_2^j)$, and note that
$
y_{00}=1$, $y_{10}=x_1$, $y_{01}=x_2$,
$y_{20}=X_{11}$, $y_{11}=X_{12}$, and $y_{02}=X_{22}$.
This lift contains the degree-three moment
$
y_{21}=\mathcal L_y(x_1^2x_2).
$
Setting \(s_1:=y_{21}\), the localizing block for \(x_2\geq0\)
on the monomial basis \((1,x_1)\) is
\[
\mathcal L_y\left(
x_2
\begin{pmatrix}1\\x_1\end{pmatrix}
\begin{pmatrix}1&x_1\end{pmatrix}
\right)
=
\begin{pmatrix}
y_{01}&y_{11}\\
y_{11}&y_{21}
\end{pmatrix}
=
\begin{pmatrix}
x_2&X_{12}\\
X_{12}&s_1
\end{pmatrix}\succeq 0.
\]
The localizing terms for $x_1x_2\geq0$ and $1-x_1x_2\geq0$ also combine to give
\begin{align*}
&\mathcal L_y\left(
x_1x_2\,\begin{pmatrix}
	x_1\\1\\0
\end{pmatrix}\begin{pmatrix}
x_1\\1\\0
\end{pmatrix}^\top+(1-x_1x_2)\begin{pmatrix}
x_1\\0\\1
\end{pmatrix}\begin{pmatrix}
x_1\\0\\1
\end{pmatrix}^\top
\right)
\\
=&
\begin{pmatrix}
y_{20}&y_{21}&y_{10}-y_{21}\\
y_{21}&y_{11}&0\\
y_{10}-y_{21}&0&y_{00}-y_{11}
\end{pmatrix}
=
\begin{pmatrix}
X_{11}&s_1&x_1-s_1\\
s_1&X_{12}&0\\
x_1-s_1&0&1-X_{12}
\end{pmatrix}\succeq 0.
\end{align*}
Here the above matrix is a compression of two separate localizing matrices. Separately, the two separate matrices contain the degree-four moment $y_{31}$, which cancels after combination since
$x_1x_2x_1^2+(1-x_1x_2)x_1^2=x_1^2$.
Interchanging $x_1$ and $x_2$ gives
$s_2:=y_{12}=\mathcal L_y(x_1x_2^2)$ and the other two blocks in Theorem~\ref{thm:two-shadow-hull}.
Thus $s_1$ and $s_2$ are shadow variables for the cubic moments $x_1^2x_2$ and $x_1x_2^2$, respectively.

We finish this section with a characterization of the homogenized moment cone $\mathcal K(\Fset)$ defined in \eqref{eq:KF}. For compact sets, $\mathcal K(\Fset)$ is straightforward from homogenization of $\cC(\Fset)$. However, our set $\Fset$ is non-compact and one can approach infinity from the origin along either direction $(1,0)^\top$ or $(0,1)^\top$. We will utilize the characterization of $\mathcal K(\Fset)$ for unboundedness nonconvex set in \cite{JoyceYang2024} to derive the following corollary.

\begin{corollary}
\label{cor:homogenized-moment-cone}
We have
\begin{align}
\label{eq:KF_sdr}
\begin{aligned}
    \mathcal K(\Fset)
=\Bigg\{\begin{pmatrix}
\tau&\vx^\top\\
\vx&X
\end{pmatrix}\ \Bigg|\ &\begin{pmatrix}
\tau&\vx^\top\\
\vx&X
\end{pmatrix}\succeq0,
\quad
\begin{pmatrix}x_1&X_{12}\\X_{12}&x_2\end{pmatrix}\succeq0,
\quad \exists s_1,s_2\in\mathbb R\text{ such that}
\\
&\begin{pmatrix}x_2&X_{12}\\X_{12}&s_1\end{pmatrix}\succeq0,
\quad
\begin{pmatrix}
X_{11}&s_1&x_1-s_1\\
s_1&X_{12}&0\\
x_1-s_1&0&\tau-X_{12}
\end{pmatrix}\succeq0,
\\
&\begin{pmatrix}x_1&X_{12}\\X_{12}&s_2\end{pmatrix}\succeq0,
\quad
\begin{pmatrix}
X_{22}&s_2&x_2-s_2\\
s_2&X_{12}&0\\
x_2-s_2&0&\tau-X_{12}
\end{pmatrix}\succeq0
\Bigg\}.
\end{aligned}
\end{align}
\end{corollary}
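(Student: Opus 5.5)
Denote by $\mathcal M$ the right-hand side of \eqref{eq:KF_sdr}. The plan is to show $\mathcal K(\Fset)=\mathcal M$ by splitting the cone into the slice $\tau>0$, handled by Theorem~\ref{thm:two-shadow-hull}, and the recession part $\tau=0$, handled by direct analysis.

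\textbf{Inclusion $\mathcal K(\Fset)\subseteq\mathcal M$.} Every linear matrix inequality defining $\mathcal M$ is positively homogeneous of degree one in $(\tau,\vx,X,s_1,s_2)$; the only place $\tau$ enters is replacing $1$ by $\tau$. So $\mathcal M$ is a cone. It is closed, because the auxiliary variables are bounded: the entry $s_1$ of the $3\times3$ block is bounded by $\sqrt{X_{11}X_{12}}$, and similarly for $s_2$, so the projection of a closed set with compact fibers is closed. By Theorem~\ref{thm:two-shadow-hull}, each generator $(1,\vx^\top)^\top(1,\vx^\top)$ with $\vx\in\Fset$ lies in $\cC(\Fset)$, which is the $\tau=1$ slice of $\mathcal M$. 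Taking closed conic hulls gives the inclusion.

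\textbf{Inclusion $\mathcal M\subseteq\mathcal K(\Fset)$, case $\tau>0$.} Divide by $\tau$. The scaled matrix, together with the witnesses $s_i/\tau$, satisfies the system of Theorem~\ref{thm:two-shadow-hull}. Hence it lies in $\cC(\Fset)\subseteq\mathcal K(\Fset)$, and multiplying back by $\tau$ stays in the cone.

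\textbf{Case $\tau=0$.} This is the main step. Positive semidefiniteness of the big block forces $\vx=0$. Then the block $\begin{pmatrix}x_1&X_{12}\\X_{12}&x_2\end{pmatrix}\succeq0$ forces $X_{12}=0$. The remaining constraints then reduce to $X\succeq0$ diagonal with $X_{11},X_{22}\ge0$; the $3\times3$ blocks force $s_1=s_2=0$ and are then satisfied. So the slice is $\{\mathrm{diag}(0,X_{11},X_{22})\mid X_{11},X_{22}\ge0\}$.

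We must show each $\mathrm{diag}(0,1,0)$ and $\mathrm{diag}(0,0,1)$ lies in $\mathcal K(\Fset)$. Take $\vx=(t,0)\in\Fset$ and scale the generator by $t^{-2}$:
\[
t^{-2}\begin{pmatrix}1\\ \vx\end{pmatrix}\begin{pmatrix}1\\ \vx\end{pmatrix}^{\!\top}\to \mathrm{diag}(0,1,0)\quad\text{as } t\to\infty .
\]
The case $\mathrm{diag}(0,0,1)$ is symmetric, and closedness of the cone gives the claim. This matches the recession directions $(1,0)^\top$ and $(0,1)^\top$ noted before the corollary. Equivalently, one may cite the Joyce--Yang description $\mathcal K(\Fset)=\overline{\cone}(\cC(\Fset))$, where the $\tau=0$ part is generated by $\mathrm{diag}(0,dd^\top)$ over recession directions $d$ of $\Fset$. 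Here the recession cone of $\Fset$ is spanned by $(1,0)$ and $(0,1)$, and $dd^\top$ for $d\ge0$ with $d_1d_2=0$ gives exactly the diagonal matrices above.

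The obstacle I expect is correctly verifying that the $\tau=0$ slice of $\mathcal M$ contains nothing beyond these diagonal matrices. In particular, $X_{12}=0$ must be forced, since a nonzero $X_{12}$ would correspond to the non-recession direction $(1,1)$. This follows because $x_1=x_2=0$ and the $2\times2$ block force it.
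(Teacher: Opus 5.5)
Your proof is correct, and it takes a more self-contained route than the paper.

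The paper invokes \cite[Lemma~5]{JoyceYang2024} to write $\mathcal K(\Fset)$ as the convex hull of $yy^\top$ over $y\in\cone(\{1\}\times\Fset)\cup\{(0,1,0)^\top,(0,0,1)^\top\}$. It then only checks that the two recession generators satisfy the linear matrix inequalities. This gives $\mathcal K(\Fset)\subseteq\mathcal M$ with no closedness argument, since closure is absorbed into the cited lemma. The reverse inclusion is left implicit.

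You instead prove both inclusions directly:
\begin{enumerate}
\item closedness of $\mathcal M$ from the bound $s_1^2\le X_{11}X_{12}$ and its analogue for $s_2$;
\item the $\tau>0$ slice by homogenizing Theorem~\ref{thm:two-shadow-hull};
\item the $\tau=0$ slice, by showing the constraints force $\vx=0$, $X_{12}=0$ and $s_1=s_2=0$, and then recovering $\mathrm{diag}(0,1,0)$ and $\mathrm{diag}(0,0,1)$ as limits of $t^{-2}$-scaled rank-one generators.
\end{enumerate}
The $\tau=0$ analysis is exactly what $\mathcal M\subseteq\mathcal K(\Fset)$ requires. Your limit argument also removes the need for the Joyce--Yang lemma. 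Your version therefore gains completeness at the cost of a few more lines.

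One wording point: ``the projection of a closed set with compact fibers is closed'' is false in general. Projecting the hyperbola $xy=1$ onto the first coordinate gives $\mathbb R\setminus\{0\}$ even though every fiber is a single point. Your argument works for a different reason: the bound $|s_1|\le\sqrt{X_{11}X_{12}}$ is locally uniform in the projected variables. So witnesses along a convergent sequence have a convergent subsequence, and you should state it that way.
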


\begin{proof}
Noting that the asymptotic cone of $F$ is exactly the two nonnegative axes, by 
\cite[Lemma~5]{JoyceYang2024} we have
$
\mathcal K(\Fset)=
\conv\{yy^\top:y\in\cone(\{1\}\times\Fset)
\cup\{(0,1,0)^\top, (0,0,1)^\top\}\}.$
To finish the proof it suffices to show that $yy^\top$ for both $y=(0,1,0)^\top$ and $y=(0,0,1)^\top$ can be represented in the form of $\begin{pmatrix}
    \tau  & \vx^\top\\\vx & X
\end{pmatrix}$ stated in \eqref{eq:KF_sdr}. In fact, the former can be represented with $\tau=s_1=s_2=x_1=x_2=X_{12}=X_{22}=0$ and $X_{11}=1$, and the latter can be represented with  $\tau=s_1=s_2=x_1=x_2=X_{12}=X_{11}=0$ and $X_{22}=1$.
\end{proof}

\section{Other interesting results}
\label{sec:poly-consequences}

In this section, we describe a few more interesting results that can be derived from Theorem \ref{thm:main}. Specifically, we show that the characterization of $\Ncone(\Fset)$ is also related to some topics in sum-of-squares (SOS) optimization.

\subsection{An SOS characterization of \texorpdfstring{$\Ncone(\Fset)$}{N(F)}}
Hilbert's classical result \cite{hilbert1888darstellung} already shows that  $\Sigma_{3,8}\subsetneq \Poct_{3,8}$, i.e., the cone of all SOS homogeneous ternary octics $\Sigma_{3,8}$ is a strict subset of the cone of all nonnegative homogeneous ternary octics $\Poct_{3,8}$. However, an exceptional linear section of $\Poct_{3,8}$ was identified in \cite{Harris1999} which is inside $\Sigma_{3,8}$. Specifically, the author proved in \cite{Harris1999} that all even symmetric nonnegative homogeneous ternary octics are SOS. Later it was shown in \cite{goel2016choi} that this case is exceptional among all even symmetric forms and no other case has this property. 

The exceptional result in \cite{Harris1999} shows that $\mathcal{L}\cap \Poct_{3,8} = \mathcal{L}\cap \Sigma_{3,8}$ where $\mathcal{L}$ is a subspace of dimension $4$. The proof is achieved by characterizing all the extreme rays of $\mathcal{L}\cap \Poct_{3,8}$ and showing that they are all SOS. Such technique greatly motivates our research, yielding our enumeration of all extreme rays stated in Theorem \ref{thm:main}. Interestingly, our main result yields an SOS characterization of $\Ncone(\Fset)$, which reveals a $6$-dimensional subspace $\mathcal H$ such that $\mathcal{H}\cap \Poct_{3,8}=\mathcal{H}\cap \Sigma_{3,8}$. Our subspace $\mathcal{H}$ seems to be non-trivial and to the best of our knowledge does not exhibit any symmetry properties.

We can construct the above special $\mathcal{H}$ subspace through the following reparameterization. Define the dense subset
\(
\Fset^\circ:=\{(x_1,x_2)\in\mathbb R^2:
x_1\geq0,\ x_2>0,\ x_1x_2\leq1\}.
\)
The map
\(
f_0:\mathbb R^2\setminus\{(0,0)\}\rightarrow\Fset^\circ\) defined by
$
f_0(y_1,y_2):=\left(y_1^2,{1}/{(y_1^2+y_2^2)}\right)
$
is surjective.  Indeed, for \((x_1,x_2)\in\Fset^\circ\), one may take
\begin{align}
\label{eq:uv}
y_1=\sqrt{x_1},\qquad
y_2=\sqrt{\frac{1}{x_2}-x_1}.
\end{align}
Since \(\overline{\Fset^\circ}=\Fset\), continuity then allows
nonnegativity on \(\Fset\) to be tested through this parametrization.

For a quadratic \(q_{R,\vr,p}\) as in \eqref{eq:q-def-intro}, let
\(
G_{R,\vr,p}(y_1,y_2):=(y_1^2+y_2^2)^2q_{R,\vr,p}\circ f_0(y_1,y_2).
\)
Homogenizing \(G_{R,\vr,p}\) with a new variable \(y_3\) gives the ternary
octic
\begin{equation}\label{eq:P-matrix}
\begin{aligned}
P_{R,\vr,p}(y_1,y_2,y_3)
&:=y_3^8G_{R,\vr,p}\left(\frac{y_1}{y_3},\frac{y_2}{y_3}\right)=\begin{pmatrix}A\\B\\C\end{pmatrix}^{\!T}
\begin{pmatrix}
R_{11}&r_1&R_{12}\\
r_1&p&r_2\\
R_{12}&r_2&R_{22}
\end{pmatrix}
\begin{pmatrix}A\\B\\C\end{pmatrix},
\end{aligned}
\end{equation}
where
\begin{equation}\label{eq:ABC}
A:=y_1^2(y_1^2+y_2^2),\qquad
B:=y_3^2(y_1^2+y_2^2),\qquad
C:=y_3^4.
\end{equation}

\begin{lemma}\label{lem:q-P}
The quadratic \(q_{R,\vr,p}\) is nonnegative on \(\Fset\) if and only if
\(P_{R,\vr,p}\) is nonnegative on \(\mathbb R^3\).
\end{lemma}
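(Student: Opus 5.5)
The plan is to relate the values of $P_{R,\vr,p}$ directly to values of $q_{R,\vr,p}$ on $\Fset^\circ$ via the parametrization $f_0$, and then handle the degenerate points separately by continuity. The key identity, read off from \eqref{eq:P-matrix}--\eqref{eq:ABC}, is that for $y_3\neq0$ and $(y_1,y_2)\neq(0,0)$,
\[
P_{R,\vr,p}(y_1,y_2,y_3)=y_3^8\,(y_1'^2+y_2'^2)^2\,q_{R,\vr,p}\bigl(f_0(y_1',y_2')\bigr),\qquad y_i'=y_i/y_3 .
\]
Alternatively, setting $S=y_1^2+y_2^2$, one can note directly that $(A,B,C)=C\cdot(x_1,1,x_2)\cdot S/y_3^2$ up to the appropriate scaling. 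More cleanly: with $x_1=y_1^2/y_3^2$ and $x_2=y_3^2/S$, we have $A=S y_3^2 x_1$, $B=S y_3^2$, and $C=S y_3^2 x_2$. Hence
\[
(A,B,C)=S y_3^2\,(x_1,1,x_2)\quad\text{and}\quad P=(S y_3^2)^2\,q(x_1,x_2).
\]
This is the first step: verify this factorization, which is a routine check of \eqref{eq:ABC}.

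For the direction ($\Rightarrow$), assume $q\ge0$ on $\Fset$. For any $y$ with $y_3\neq0$ and $S>0$, the point $(x_1,x_2)=(y_1^2/y_3^2,\,y_3^2/S)$ satisfies $x_1\ge0$, $x_2>0$, and $x_1x_2=y_1^2/S\le1$, so it lies in $\Fset^\circ\subseteq\Fset$. Hence $P(y)=(Sy_3^2)^2q(x)\ge0$. The remaining points, where $y_3=0$ or $S=0$, form a closed set with empty interior (a union of a plane and a line). Since $P$ is a polynomial, hence continuous, and nonnegative on the dense complement, $P\ge0$ everywhere. As a cross-check, the value there is explicit: if $y_3=0$ then $B=C=0$ and $P=R_{11}A^2$; if $S=0$ then $P=R_{22}y_3^8$. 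These values are also consistent with the required signs $R_{11},R_{22}\ge0$, which follow from $q(t,0),q(0,t)\ge0$ as $t\to\infty$.

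For the direction ($\Leftarrow$), assume $P\ge0$ on $\R^3$. Given $x\in\Fset^\circ$, I take $y_3=1$ and $(y_1,y_2)$ as in \eqref{eq:uv}, which is valid because $1/x_2-x_1\ge0$. Then $S=1/x_2>0$, and $f_0(y_1,y_2)=x$, so
\[
q(x)=P(y_1,y_2,1)/S^2\ge0.
\]
Thus $q\ge0$ on $\Fset^\circ$. Since $\overline{\Fset^\circ}=\Fset$ (points with $x_2=0$ are limits of $(x_1,\epsilon)$ with $\epsilon\to0^+$, which stay in $\Fset^\circ$ for small $\epsilon$) and $q$ is continuous, $q\ge0$ on $\Fset$.

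The main obstacle is only bookkeeping. One must confirm the factorization $(A,B,C)=Sy_3^2(x_1,1,x_2)$ so that the quadratic form in \eqref{eq:P-matrix} collapses exactly to $(Sy_3^2)^2q$ with the stated ordering of $R_{11},r_1,R_{12}$ and so on. One must also make sure the density argument covers the degenerate loci. Both steps are elementary.
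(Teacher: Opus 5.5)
Your proposal is correct and follows essentially the same route as the paper: the factorization $(A,B,C)=B\,(x_1,1,x_2)$ giving $P_{R,\vr,p}=B^2q_{R,\vr,p}(A/B,C/B)$ off the degenerate locus, then $y_3=1$ with \eqref{eq:uv} and closure of $\Fset^\circ$ for the converse. The only variation is at the degenerate points $\{y_3=0\}\cup\{y_1=y_2=0\}$: you use continuity and density of the complement, whereas the paper evaluates $P_{R,\vr,p}$ there explicitly and uses $R_{11},R_{22}\geq0$; both are valid.
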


\begin{proof}
For \(y_3\neq0\) and \(y_1^2+y_2^2>0\),
\(
P_{R,\vr,p}(y_1,y_2,y_3)
=B^2q_{R,\vr,p}\left(A/B,C/B\right),
\)
\[
\frac AB=\frac{y_1^2}{y_3^2}\geq0,\ 
\frac CB=\frac{y_3^2}{y_1^2+y_2^2}\geq0,\text{ and } 
\frac{AC}{B^2}=\frac{y_1^2}{y_1^2+y_2^2}\leq1.
\]
Thus nonnegativity of \(q_{R,\vr,p}\) on \(\Fset\) implies nonnegativity of
\(P_{R,\vr,p}\) at these points.  The remaining points follow from
\(
P_{R,\vr,p}(0,0,y_3)=R_{22}y_3^8\) and \(
P_{R,\vr,p}(y_1,y_2,0)=R_{11}y_1^4(y_1^2+y_2^2)^2,
\)
where \(R_{11},R_{22}\geq0\) are forced by the two unbounded axes of
\(\Fset\).

Conversely, setting \(y_3=1\) gives
\(
P_{R,\vr,p}(y_1,y_2,1)
=(y_1^2+y_2^2)^2q_{R,\vr,p}\circ f_0(y_1,y_2).
\)
Surjectivity of \(f_0\) gives nonnegativity on \(\Fset^\circ\), and
continuity gives nonnegativity on \(\Fset\).
\end{proof}

By the above lemma, we obtain a sum-of-squares characterization of $\Ncone(\Fset)$: $q_{R,\vr,p}\in\Ncone(\Fset)$ if and only if the following ternary octic is a nonnegative polynomial:
\begin{align}
\label{eq:P-expanded}
\begin{aligned}
\Poct_{R,\vr,p}
={}&R_{11}y_1^8+2R_{11}y_1^6y_2^2+2r_1y_1^6y_3^2+R_{11}y_1^4y_2^4
+4r_1y_1^4y_2^2y_3^2\\
&+(2R_{12}+p)y_1^4y_3^4+2r_1y_1^2y_2^4y_3^2
+(2R_{12}+2p)y_1^2y_2^2y_3^4\\
&+2r_2y_1^2y_3^6+p y_2^4y_3^4+2r_2y_2^2y_3^6+R_{22}y_3^8.
\end{aligned}
\end{align}
Let us define 
\begin{align}
    \label{eq:PSigma}
    \Poct:=\Set{\Poct_{R,\vr,p}|\Poct_{R,\vr,p}\text{ is nonnegative}}\text{ and }\Sigmacone:=\Set{\Poct_{R,\vr,p}|\Poct_{R,\vr,p}\text{ is SOS}}.
\end{align}
We can observe from \eqref{eq:P-expanded} that $\Poct = \mathcal{H}\cap \Poct_{3,8}$ and $\Sigma = \mathcal{H}\cap \Sigma_{3,8}$, where $\mathcal H$ is a 6-dimensional subspace parameterized by $(R,r,p)$.
Clearly we have $\Sigma\subseteq \Poct$.
One consequence of Theorem \ref{thm:main} is that it allows us to enumerate all extreme rays of $\Ncone(\Fset)$ and show that $\Poct_{R,\vr,p}$ is SOS whenever $q_{R,\vr,p}$ is an extreme ray of $\Ncone(\Fset)$. Consequently, $\Poct=\Sigma$, as described in the following theorem.

\begin{theorem}
    \label{thm:PisSOS}
    For every extreme ray of $\Ncone(\Fset)$, the associated ternary octic $\Poct_{R,\vr,p}$ is SOS. Consequently, 
    \begin{align}
        \Poct=\Sigmacone.
    \end{align}
\end{theorem}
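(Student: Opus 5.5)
The plan is to verify the SOS property ray by ray, using Theorem~\ref{thm:main}, and then conclude $\Poct=\Sigmacone$ from conicity. The map $(R,\vr,p)\mapsto \Poct_{R,\vr,p}$ is linear. By Lemma~\ref{lem:q-P} it sends $\Ncone(\Fset)$ onto $\Poct$, and $\Ncone(\Fset)$ is the conic hull of the listed extreme rays. So every element of $\Poct$ is a nonnegative combination of the octics attached to those rays. Since $\Sigmacone$ is a convex cone, it therefore suffices to show that each listed ray gives an SOS octic.

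The basic tool is the formula $P=B^2\,q(A/B,C/B)$, which extends to non-quadratic polynomial identities. Write $S:=y_1^2+y_2^2$, so that $A=y_1^2S$, $B=y_3^2S$ and $C=y_3^4$. Any representation $q=\sum_k m_k\,\sigma_k$, with $m_k$ products of $x_1$, $x_2$, $x_1x_2$, $1-x_1x_2$ and $\sigma_k$ squares, transfers to $P$ provided each image $B^2 m_k\sigma_k(A/B,C/B)$ is a polynomial. The useful dictionary is
\[
x_1\mapsto y_1^2,\qquad
x_2\mapsto \frac{y_3^2}{S},\qquad
x_1x_2\mapsto \frac{y_1^2y_3^2}{S},\qquad
1-x_1x_2\mapsto\frac{y_2^2y_3^2}{S},
\]
after dividing by $B/y_3^2$ where appropriate.

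With this dictionary, the easy rays are direct:
\begin{itemize}
\item $\ell^2$, with $\ell=ax_1+bx_2+c$, gives $(aA+cB+bC)^2$. The same computation covers the axis squares.
\item $x_1$ gives $AB=(y_1y_3S)^2$.
\item $x_2$ gives $BC=(y_1y_3^3)^2+(y_2y_3^3)^2$.
\item $x_1x_2$ gives $AC=y_1^2y_3^4S$.
\item $1-x_1x_2$ gives $B^2-AC=y_2^2y_3^4S$.
\item $h_w$ gives $y_3^2S\bigl[(wy_3^2-y_1^2)^2+y_1^2y_2^2\bigr]$.
\end{itemize}
Each of these is visibly SOS.

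For $g^1_{u,v}$, I will use the rank-one specialization of the identity in the proof of Proposition~\ref{prop:family-II-sdr}, with $s=x_1^2x_2$:
\[
g^1_{u,v}=x_1x_2(v-x_1)^2+(1-x_1x_2)(u-x_1)^2+2(v-u)\,x_2(v-x_1)^2 .
\]
One checks directly that the cubic and quartic terms cancel, so this is a genuine polynomial identity. Each term maps to a polynomial:
\[
\Poct=y_1^2S\,(vy_3^2-y_1^2)^2+y_2^2S\,(uy_3^2-y_1^2)^2+2(v-u)\,y_3^2S\,(vy_3^2-y_1^2)^2 .
\]
Since $v>u$, this is SOS.

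The main obstacle is $g^2_{u,v}$, because the parametrization is not symmetric in $x_1,x_2$. The analogous identity with $x_1\leftrightarrow x_2$ produces terms of the form $x_1x_2\cdot x_2^2$, whose images carry a factor $1/S$. Put $t=y_3^2$, $a=vS-t$ and $b=uS-t$, so that $a-b=(v-u)S$. Then
\[
y_1^2a^2+y_2^2b^2=Sb^2+(v-u)S\,y_1^2(a+b),
\]
and the $1/S$ cancels, giving
\[
\Poct=y_3^4(uS-y_3^2)^2+(v-u)\,y_1^2y_3^2\bigl[\,2a^2+y_3^2(a+b)\,\bigr].
\]
The bracket equals $2a^2+2ta-(v-u)tS$ and is not termwise nonnegative. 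The remaining work is therefore to complete squares across the two summands, borrowing from $y_3^4(uS-y_3^2)^2$ and from the $y_2^2$ parts of $S$.

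I would first try to write the octic as a quadratic form in the monomials $y_1^2y_3^2$, $y_2^2y_3^2$, $y_3^4$, $y_1^2S$, $y_1y_2y_3^2$, with entries depending on $(u,v)$. I would then exhibit an explicit PSD Gram matrix, for example by taking an $LDL^\top$ factorization whose pivots are expressions in $v-u\ge0$, $u\ge0$ and $v>0$. If a single Gram matrix fails, the fallback is to split $\{0\le u<v\}$ according to the sign of $3v-u-2v^2\cdot(\text{scale})$, mirroring the boundary-contact analysis in Section~\ref{sec:NF}, and to treat each piece separately.
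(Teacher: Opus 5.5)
Your framework is the paper's: push each extreme ray of Theorem~\ref{thm:main} through the linear map $q_{R,\vr,p}\mapsto\Poct_{R,\vr,p}$, certify that the image is SOS, and conclude from the conic-hull statement. The computations you complete are correct. This includes your certificate for $g^1_{u,v}$, which you obtain by transporting the rank-one identity behind Proposition~\ref{prop:family-II-sdr}. It differs from the paper's four-square formula and is a more systematic derivation.

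The gap is the family $g^2_{u,v}$. With $S=y_1^2+y_2^2$, $t=y_3^2$, $a=vS-t$, $b=uS-t$, you correctly reduce its octic to
\[
\Poct_{R,\vr,p}=t^2b^2+(v-u)\,y_1^2t\,\bigl[2a^2+2ta-(v-u)tS\bigr].
\]
After that you offer only a search plan, and the concrete plan cannot succeed. The octic contains the term $2v^2(v-u)\,y_1^6y_3^2$ but no $y_1^8$ term. So in any SOS representation no square may contain $y_1^4$, and some square must contain $y_1^3y_3$. That monomial is not in the span of your proposed basis $y_1^2y_3^2$, $y_2^2y_3^2$, $y_3^4$, $y_1^2S$, $y_1y_2y_3^2$. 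Indeed, the $y_1^2S$ row of any Gram matrix is forced to vanish, and all remaining products have $y_3$-degree at least $4$. The fallback case split does not address this. Since every $g^2_{u,v}$ spans an extreme ray, the inclusion $\Poct\subseteq\Sigmacone$ remains unproved.

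Your reduction already points to the fix. Keep the factor $y_1^2t$ outside; this brings in exactly the missing odd monomials $y_1y_3\cdot\{y_1^2,y_2^2,y_3^2\}$. Substitute $a=b+(v-u)S$ in the linear term only, so that $2ta-(v-u)tS=2tb+(v-u)tS$. The cross term $2(v-u)y_1^2t^2b$ then completes the square with $t^2b^2$, and the deficit $-(v-u)^2y_1^4t^2$ is absorbed by $(v-u)^2y_1^2t^2S$:
\begin{align*}
\Poct_{R,\vr,p}&=t^2\bigl(b+(v-u)y_1^2\bigr)^2+2(v-u)\,y_1^2t\,a^2+(v-u)^2y_1^2y_2^2t^2\\
&=\bigl(y_3^4-vy_1^2y_3^2-uy_2^2y_3^2\bigr)^2+2(v-u)\bigl(y_1y_3(y_3^2-vS)\bigr)^2+(v-u)^2\bigl(y_1y_2y_3^2\bigr)^2 .
\end{align*}
This is exactly the decomposition the paper gives, and it holds for all $0\le u<v$ with no case split. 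With it your proof is complete.

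A minor slip: your dictionary is inconsistent as written, since $x_1=A/B=y_1^2/y_3^2$ and $x_1x_2=y_1^2/S$. The images you actually compute are nonetheless correct.
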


\begin{proof}
    Observe that the map from $q_{R,\vr,p}$ to $\Poct_{R,\vr,p}$ is a linear isomorphism between cones $\Ncone(\Fset)$ and $\Poct$. Therefore $q_{R,\vr,p}$ is an extreme ray of $\Ncone(\Fset)$ if and only if it is an extreme ray of $\Poct$. Observing the relation between $P_{R,\vr,p}$ and $q_{R,\vr,p}$ in the proof of Lemma \ref{lem:q-P}, we may set $x_1 = A/B$ and $x_2=C/B$ and multiply all the extreme-ray expressions in Theorem \ref{thm:main} by $B^2$ to obtain extreme rays of $\Poct$:
\begin{enumerate}
 \item \((aA+cB+bC)^2\), where the affine line
 \(ax_1+bx_2+c=0\) meets \(\operatorname{int}(\Fset)\);
 \item \(A^2\) and \(C^2\);
 \item \(AB\), \(AC\), and \(BC\);
 \item \(B^2-AC\);
 \item Lifted tangent rays:
 \[
 AB+w^2BC-2wAC,\qquad w>0;
 \]
 \item Bitangent rays:
 \begin{align*}
 (A-uB)^2+(v-u)C\bigl(2v^2B-(3v-u)A\bigr),&
 \qquad 0\leq u<v,\\
 (C-uB)^2+(v-u)A\bigl(2v^2B-(3v-u)C\bigr),&
 \qquad 0\leq u<v.
 \end{align*}
\end{enumerate}
Here the first three classes of extreme rays are clearly SOS from the definition of $A$, $B$, and $C$ in \eqref{eq:ABC}. In addition, direct computation with \eqref{eq:ABC} also yields 
\begin{align}
& B^2-AC = 
(y_1y_2y_3^2)^2+(y_2^2y_3^2)^2,
\\
& AB+w^2BC-2wAC = \bigl(y_1y_3(y_1^2+y_2^2)-wy_1y_3^3\bigr)^2+w^2y_2^2y_3^6,
\\
& (A-uB)^2+(v-u)C\bigl(2v^2B-(3v-u)A\bigr) 
\\
 = & 
\bigl(y_1^2(y_1^2+y_2^2)-u\,y_2^2y_3^2-v\,y_1^2y_3^2\bigr)^2
+2(v-u)\bigl(y_1y_3(y_1^2-vy_3^2)\bigr)^2
\\
&+2(v-u)\left(y_2y_3\left(y_1^2-\frac{u+3v}{4}y_3^2\right)\right)^2
+\frac{(7v+u)(v-u)^2}{8}(y_2y_3^3)^2, \text{ and }
\\
& (C-uB)^2+(v-u)A\bigl(2v^2B-(3v-u)C\bigr) 
\\
 = & 
\bigl(y_3^4-vy_1^2y_3^2-uy_2^2y_3^2\bigr)^2
+2(v-u)\bigl(y_1y_3\bigl(y_3^2-v(y_1^2+y_2^2)\bigr)\bigr)^2
+(v-u)^2\bigl(y_1y_2y_3^2\bigr)^2.
\end{align}
Thus all extreme rays of $\Poct$ are SOS.

The cone \(\Poct\) is
closed and pointed.  Intersecting it with an affine hyperplane defined by
an element of the interior of its dual yields a compact base (for a detailed argument, see the Proof of Theorem \ref{thm:main} at the end of the paper); the
finite-dimensional Krein--Milman theorem therefore expresses the cone as
the closed conic hull of its extreme rays.  Our computation above places every extreme ray of $\Poct$ in the closed convex
cone \(\Sigmacone\).  Therefore $\Poct\subseteq \Sigmacone$. The other inclusion $\Sigmacone\subseteq \Poct$ is trivial.

\end{proof}

\subsection{A bounded-degree preordering certificate for quadratics on \texorpdfstring{$\Fset$}{F}}

In addition to the above ternary octics SOS characterization, in this subsection we show an alternative SOS characterization of $\Ncone(\Fset)$. Observing the description of $\Fset$, consider the set
\begin{align}
    T:=\Set{\sum_{(e_1,e_2,e_3)\in \{0,1\}^3}\sigma_{e_1,e_2,e_3} x_1^{e_1}x_2^{e_2}(1-x_1x_2)^{e_3}|\sigma_{e_1,e_2,e_3} \text{ is SOS}}.
\end{align}
The set $T$ is called a preordering generated by constraints $x_1\ge 0$, $x_2\ge 0$, and $1-x_1x_2\ge 0$. Note that the degree of SOS polynomials $\sigma_{e_1,e_2,e_3}$ above can be arbitrarily high.
For any polynomial $f$, the condition $f\in T$ is sufficient for nonnegativity over $\Fset$. 
Note that such condition is not always necessary, and the degree bound of the SOS terms $\sigma_{e_1,e_2,e_3}$ are usually either unavailable or extremely high. For our non-compact set $F$, the necessary and sufficient condition is the Krivine–Stengle Positivstellensatz \cite{Krivine1964,Stengle1974}: a polynomial $f$ is nonnegative over $\Fset$ if and only if there exists $g_1,g_2\in T$ and integer $m$ such that $g_1 f = f^{2m} + g_2$. However, the degree bound of such certificate is dependent on $f$ (see, e.g., \cite{heijmans2026degree}).   

Due to the lack of uniform degree bounds, the above known SOS results for general polynomials are not applicable when we try to characterize a semidefinite representation of $\Ncone(\Fset)$. However, since we are focusing on quadratic polynomials nonnegative on $\Fset$, with the semidefinite representation of $\cC(\Fset)$ in Theorem \ref{thm:two-shadow-hull}, through duality we can 
obtain the semidefinite representation of $\Ncone(\Fset)$ as well as a bounded-degree preordering certificate.

\begin{theorem}
\label{thm:two-shadow-dual}
We have
\begin{align}
    \label{eq:two-shadow-dual}
    \Ncone(\Fset) = \Set{\begin{pmatrix}
    R & \vr^\top \\ \vr & p
\end{pmatrix}|\exists t_1,\ldots,t_{19} \text{ s.t. }\Lambda_1,\ldots,\Lambda_6\succeq 0}
\end{align}
where
\begin{align}
 \Lambda_1:=&
\begin{pmatrix}
p- t_{11}- t_{19}
&r_1-\frac{ t_1}{2}- t_8-\frac{ t_{12}}2
&r_2-\frac{ t_3}{2}-\frac{ t_4}{2}- t_{16}
\\[10pt]
r_1-\frac{ t_1}{2}- t_8-\frac{ t_{12}}2
&R_{11}- t_6
&\begin{gathered}
R_{12}- t_2- t_5- t_{13}\\[-2pt]
{}-\tfrac{1}{2} (t_9- t_{11}
+ t_{17}- t_{19})
\end{gathered}
\\[10pt]
r_2-\frac{ t_3}{2}-\frac{ t_4}{2}- t_{16}
&\begin{gathered}
R_{12}- t_2- t_5- t_{13}\\[-2pt]
{}-\tfrac{1}{2} (t_9- t_{11}
+ t_{17}- t_{19})
\end{gathered}
&R_{22}- t_{14}
\end{pmatrix},
\\
 \Lambda_2&:=
\begin{pmatrix}
 t_1& t_2\\
 t_2& t_3
\end{pmatrix},\ 
 \Lambda_3:=
\begin{pmatrix}
 t_4& t_5\\
 t_5&2( t_8- t_7)
\end{pmatrix},
\ \Lambda_4:=
\begin{pmatrix}
 t_6& t_7& t_8\\
 t_7& t_9& t_{10}\\
 t_8& t_{10}& t_{11}
\end{pmatrix},
\\
 \Lambda_5&:=
\begin{pmatrix}
 t_{12}& t_{13}\\
 t_{13}&2( t_{16}- t_{15})
\end{pmatrix},
\text{ and } 
 \Lambda_6:=
\begin{pmatrix}
 t_{14}& t_{15}& t_{16}\\
 t_{15}& t_{17}& t_{18}\\
 t_{16}& t_{18}& t_{19}
\end{pmatrix}.
\end{align}
Equivalently, $q_{R,\vr,p}\in \Ncone(\Fset)$ if and only if there exists $t_1,\ldots,t_{19}$ such that $\Lambda_1,\ldots,\Lambda_6\succeq 0$ and 
\begin{align}
    \label{eq:two-shadow-dual-monomials}
\begin{aligned}
    q_{R,\vr,p}(\vx) = & \begin{pmatrix}
        1 \\ \vx
    \end{pmatrix}^\top\Lambda_1\begin{pmatrix}
        1 \\ \vx
    \end{pmatrix} + x_1\begin{pmatrix}
        1 \\ x_2
    \end{pmatrix}^\top \Lambda_2 \begin{pmatrix}
        1 \\ x_2
    \end{pmatrix} + x_2(1-x_1x_2)\begin{pmatrix}
        0 \\ 1
    \end{pmatrix}^\top \Lambda_2 \begin{pmatrix}
        0 \\ 1
    \end{pmatrix}
    \\
    & + x_2\begin{pmatrix}
        1 \\ x_1
    \end{pmatrix}^\top \Lambda_3 \begin{pmatrix}
        1\\ x_1
    \end{pmatrix}  + x_1x_2\begin{pmatrix}
        x_1 \\ 1 \\ 0
    \end{pmatrix}^\top \Lambda_4\begin{pmatrix}
        x_1 \\ 1 \\ 0
    \end{pmatrix} + (1-x_1x_2)\begin{pmatrix}
        x_1 \\ 0 \\ 1
    \end{pmatrix}^\top \Lambda_4\begin{pmatrix}
        x_1 \\ 0 \\ 1
    \end{pmatrix}
    \\
    & + x_1\begin{pmatrix}
        1 \\ x_2
    \end{pmatrix}^\top \Lambda_5\begin{pmatrix}
        1 \\ x_2
    \end{pmatrix} + x_1x_2\begin{pmatrix}
        x_2 \\ 1 \\ 0
    \end{pmatrix}^\top \Lambda_6\begin{pmatrix}
        x_2 \\ 1 \\ 0
    \end{pmatrix} + (1-x_1x_2)\begin{pmatrix}
        x_2 \\ 0 \\ 1
    \end{pmatrix}^\top \Lambda_6\begin{pmatrix}
        x_2 \\ 0 \\ 1
    \end{pmatrix}.
\end{aligned}    
\end{align}
\end{theorem}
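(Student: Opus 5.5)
The plan is to obtain \eqref{eq:two-shadow-dual} by conic duality from Corollary~\ref{cor:homogenized-moment-cone}, combined with $\Ncone(\Fset)=\mathcal K(\Fset)^*$ from \eqref{eq:KNC_relations}. Write $\mathcal K(\Fset)$ as the projection onto the $(\tau,\vx,X)$ coordinates of a spectrahedron in the variables $(\tau,\vx,X,s_1,s_2)$. This spectrahedron is cut out by six linear matrix inequalities, which we index in the order matching $\Lambda_1,\dots,\Lambda_6$:
\begin{itemize}
\item the Shor block;
\item the block $\begin{pmatrix}x_1&X_{12}\\X_{12}&x_2\end{pmatrix}$;
\item the block $\begin{pmatrix}x_2&X_{12}\\X_{12}&s_1\end{pmatrix}$;
\item the $3\times 3$ block containing $X_{11},s_1$;
\item the analogous $2\times2$ block in $s_2$;
\item the analogous $3\times3$ block in $s_2$.
\end{itemize}
Each block is a linear map $\mathcal A_k(\tau,\vx,X,s_1,s_2)$. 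The dual of a projected spectrahedral cone $\{Z:\exists s,\ \mathcal A_k(Z,s)\succeq0\}$ is
\[
\Bigl\{W:\ \exists \Lambda_k\succeq0 \text{ with } W=\sum_k\mathcal A_k^{*}(\Lambda_k)|_Z \text{ and } \sum_k\mathcal A_k^{*}(\Lambda_k)|_s=0\Bigr\},
\]
\emph{provided} that set is closed or a Slater point exists.

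The first key step is to compute the adjoints. Parametrize $\Lambda_2,\dots,\Lambda_6$ by free symmetric entries $t_1,t_2,\dots$. The coefficients of $s_1$ and $s_2$ must vanish, and these two linear equations are what produce the entries $2(t_8-t_7)$ and $2(t_{16}-t_{15})$. Rather than leaving them as equations, I would solve for the $(2,2)$ entries of $\Lambda_3$ and $\Lambda_5$: the $s_1$ coefficient is $(\Lambda_3)_{22}$ from the localizing block, plus $2(\Lambda_4)_{12}$, minus $2(\Lambda_4)_{13}$. Matching the coefficients of $\tau,\vx,X$ then expresses $(p,\vr,R)$ as $\Lambda_1$ plus linear combinations of the $t_i$. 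Rearranging gives $\Lambda_1$ exactly in the displayed form, with off-diagonal entries halved because of the symmetric pairing $2r_i x_i$, $2R_{12}X_{12}$.

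The second key step is the monomial identity \eqref{eq:two-shadow-dual-monomials}. It is simply the statement that the pairing of $q_{R,\vr,p}$ with the rank-one lift $(1,\vx)(1,\vx)^\top$ equals $\sum_k\langle\Lambda_k,\mathcal A_k(\text{moments})\rangle$ once we set $s_1=x_1^2x_2$ and $s_2=x_1x_2^2$. Each block evaluated at these moments is exactly a localizing form, as explained in the Lasserre remark after Theorem~\ref{thm:two-shadow-hull}. In particular:
\begin{itemize}
\item the $3\times3$ block is the combination $x_1x_2(x_1,1,0)(x_1,1,0)^\top+(1-x_1x_2)(x_1,0,1)(x_1,0,1)^\top$;
\item the block $\begin{pmatrix}x_1&X_{12}\\X_{12}&x_2\end{pmatrix}$ must be written as the split $x_1(1,x_2)(1,x_2)^\top+x_2(1-x_1x_2)e_2e_2^\top$. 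This split accounts for the two terms involving $\Lambda_2$.
\end{itemize}
I would verify the identity by direct expansion. The cubic and quartic monomials cancel by virtue of the $s$-equations, which gives the sufficiency direction independently.

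The main obstacle is the absence of a duality gap, i.e., closedness of the projected dual cone. I would handle it by exhibiting a strictly feasible point of the primal system. Take $\tau=1$ and a moment vector from a measure with positive density on $\operatorname{int}\Fset$, truncated suitably. For example, use the moments of the uniform measure on a small disk inside $\operatorname{int}\Fset$, plus a small multiple of the identity on the $X$ block. With $s_1,s_2$ set to the corresponding cubic moments, every block is positive definite, because each localizing form is strictly positive on nonzero vectors for a measure with nonempty-interior support where the multipliers are positive.

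With Slater in hand, standard conic duality gives that the dual of the projection equals the set of all $W$ representable as above. The infimum is attained, so the $t_i$ exist. Combined with $\Ncone(\Fset)=\mathcal K(\Fset)^*$, this yields both \eqref{eq:two-shadow-dual} and the certificate \eqref{eq:two-shadow-dual-monomials}.
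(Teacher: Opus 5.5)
Your proposal is correct and follows the paper's route. You dualize the spectrahedral description of $\mathcal K(\Fset)$ from Corollary~\ref{cor:homogenized-moment-cone}, eliminate $s_1,s_2$ to obtain the entries $2(t_8-t_7)$ and $2(t_{16}-t_{15})$, and verify \eqref{eq:two-shadow-dual-monomials} by expanding the localizing decompositions at $s_1=x_1^2x_2$, $s_2=x_1x_2^2$. Your strictly feasible moment point (uniform measure on a small disk in $\operatorname{int}(\Fset)$, which makes all six blocks positive definite) supplies the no-duality-gap justification that the paper's ``direct dual cone computation'' leaves implicit.
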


\begin{proof}
    The result \eqref{eq:two-shadow-dual} follows from direct dual cone computation based on the duality relationship between $\mathcal K(\Fset)$ (see Corollary \ref{cor:homogenized-moment-cone}) and $\Ncone(F)$. Applying the result \eqref{eq:two-shadow-dual}, to show the next equivalence it suffices to show that \eqref{eq:two-shadow-dual-monomials} holds for any $t_1,\ldots, t_{19}$, which can be done by direct computation. Note that here the expression \eqref{eq:two-shadow-dual-monomials} is developed by rank-1 and rank-2 decompositions of the 6 semidefinite matrices described in Theorem \ref{thm:two-shadow-hull} with $X_{11} = x_1^2$, $X_{12}=x_1x_2$, $X_{22} = x_2^2$, $s_1=x_1^2x_2$, and $s_2 = x_1x_2^2$ (see also the remark after Theorem \ref{thm:two-shadow-hull}).
\end{proof}

By the results above, the condition $q\in T$ is a necessary and sufficient condition for certifying nonnegativity over the non-compact set $\Fset$. Moreover, our certificate has bounded degree: the degree of $\sigma_{e_1,e_2,e_3}x_1^{e_1}x_2^{e_2}(1-x_1x_2)^{e_3}$ can be bounded by $4$ and all the SOS polynomials $\sigma_{e_1,e_2,e_3}$ involved are quadratic. It should be noted that a bounded-degree preordering certificate  could also be derived by first describing the polynomial $\Poct_{R,\vr,p}$ using Gram matrices and then revert the parameterization \eqref{eq:uv}. However, such certificate has a higher degree bound (which is 6) and hence is skipped.

\subsection{A bounded-degree certificate for certain quartics on the half-strip}
\label{subsec:strip}

In \cite{Marshall2010} a long-standing problem in algebraic geometry concerning non-compact semi-algebraic sets is settled. Specifically, it  shows that $f(z_1,z_2)$ is a nonnegative polynomial over the strip $[0,1]\times\mathbb{R}$ if and only if $f(z_1,z_2) = \sigma_1(z_1,z_2) + \sigma_{2}(z_1,z_2)z_1(1-z_1)$, where $\sigma_1$ and $\sigma_2$ are SOS. Extension to the half-strip $[0,1]\times[0,\infty)$ and other non-compact sets is studied in \cite{NguyenPowers2012}. However, the degree bounds of the strip and half-strip cases are rarely studied. To the best of our knowledge, the only degree bound in the literature is concerning the strip: in \cite{EscorcieloPerrucci2021} a degree bound is established when $f$ is quadratic with respect to $z_2$. In this subsection, we show that Theorem \ref{thm:two-shadow-dual} in the previous section yields an interesting bounded-degree certificate for a family of quartics over the half-strip.

We reparameterize our variables by $z_1=x_1x_2\in[0,1]$ and $z_2=x_1$ so the underlying set of interest becomes the half-strip.
For \(z_2>0\), we then have \(x_2=z_1/z_2\).  Clearing denominators gives
the polynomial
\begin{equation}
\label{eq:f_quartic}
\begin{aligned}
f_{R,\vr,p}(z_1,z_2)
&:=z_2^2q_{R,\vr,p}\left(z_2,\frac{z_1}{z_2}\right)\\
&=R_{11}z_2^4+2r_1z_2^3+pz_2^2+2R_{12}z_1z_2^2
  +2r_2z_1z_2+R_{22}z_1^2.
\end{aligned}
\end{equation}
In the following theorem, we show that for the family of quartic polynomials of form  \eqref{eq:f_quartic}, we have a bound-degree certificate of nonnegativity over the half-strip. Note that simply replacing $z_2$ by $z_2^2$ in the theorem would yield a bounded-degree certificate of nonnegativity for a family of octics over the strip. The bounded-degree result for such octics could not be achieved by \cite{EscorcieloPerrucci2021} since the polynomial is no longer quadratic with respect to $z_2$. We skip the description on this certificate on the strip since it follows straightforwardly from the theorem.

\begin{theorem}
\label{thm:half-strip-degree}
\(f_{R,\vr,p}\) is nonnegative on the half-strip if and only if there exists $t_1,\ldots,t_{19}$ such that $\Lambda_1,\ldots,\Lambda_6\succeq 0$ and 
\begin{align}
    \label{eq:qDecomp-half-strip}
    \begin{aligned}    
    f_{R,\vr,p}(z_1,z_2) ={}&
    \begin{pmatrix}
        z_2 \\ z_2^2 \\ z_1
    \end{pmatrix}^{\!\top}\Lambda_1
    \begin{pmatrix}
        z_2 \\ z_2^2 \\ z_1
    \end{pmatrix}
    +z_2\begin{pmatrix}
        z_2 \\ z_1
    \end{pmatrix}^{\!\top}\Lambda_2
    \begin{pmatrix}
        z_2 \\ z_1
    \end{pmatrix}
    +z_1z_2(1-z_1)\begin{pmatrix}
        0 \\ 1
    \end{pmatrix}^{\!\top}\Lambda_2
    \begin{pmatrix}
        0 \\ 1
    \end{pmatrix}
    \\
    &+z_1z_2\begin{pmatrix}
        1 \\ z_2
    \end{pmatrix}^{\!\top}\Lambda_3
    \begin{pmatrix}
        1 \\ z_2
    \end{pmatrix}
    +z_1\begin{pmatrix}
        z_2^2 \\ z_2 \\ 0
    \end{pmatrix}^{\!\top}\Lambda_4
    \begin{pmatrix}
        z_2^2 \\ z_2 \\ 0
    \end{pmatrix}
    +(1-z_1)\begin{pmatrix}
        z_2^2 \\ 0 \\ z_2
    \end{pmatrix}^{\!\top}\Lambda_4
    \begin{pmatrix}
        z_2^2 \\ 0 \\ z_2
    \end{pmatrix}
    \\
    &+z_2\begin{pmatrix}
        z_2 \\ z_1
    \end{pmatrix}^{\!\top}\Lambda_5
    \begin{pmatrix}
        z_2 \\ z_1
    \end{pmatrix}
    +z_1\begin{pmatrix}
        z_1 \\ z_2 \\ 0
    \end{pmatrix}^{\!\top}\Lambda_6
    \begin{pmatrix}
        z_1 \\ z_2 \\ 0
    \end{pmatrix}
    +(1-z_1)\begin{pmatrix}
        z_1 \\ 0 \\ z_2
    \end{pmatrix}^{\!\top}\Lambda_6
    \begin{pmatrix}
        z_1 \\ 0 \\ z_2
    \end{pmatrix},
    \end{aligned}
\end{align}
where $\Lambda_1,\ldots,\Lambda_6$ are defined in Theorem \ref{thm:two-shadow-dual}.
\end{theorem}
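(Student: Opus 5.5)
The plan is to reduce the statement to Theorem~\ref{thm:two-shadow-dual} through the change of variables $z_1=x_1x_2$, $z_2=x_1$. The argument has three steps: an equivalence of nonnegativity, an equivalence of identities, and a check of the degenerate line $z_2=0$.

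\emph{Step 1: $f_{R,\vr,p}\ge0$ on the half-strip if and only if $q_{R,\vr,p}\in\Ncone(\Fset)$.}

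Define the map $\phi(x_1,x_2)=(x_1x_2,x_1)$. It sends $\Fset\cap\{x_1>0\}$ bijectively onto $(0\text{ or }[0,1])\times(0,\infty)$; more precisely, onto $\{(z_1,z_2): 0\le z_1\le1,\ z_2>0\}$, with inverse $x_2=z_1/z_2$. On this set, $f_{R,\vr,p}(z_1,z_2)=z_2^2\,q_{R,\vr,p}(z_2,z_1/z_2)$ and $z_2^2>0$. Therefore nonnegativity of $q$ on $\Fset\cap\{x_1>0\}$ is equivalent to nonnegativity of $f$ on $[0,1]\times(0,\infty)$.

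Both domains are dense in their closures, $\Fset$ and $[0,1]\times[0,\infty)$ respectively, and both polynomials are continuous. Closing up on each side gives the equivalence. This step is routine.

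\emph{Step 2: the two identities are equivalent.}

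Substitute $x_1=z_2$, $x_2=z_1/z_2$ into the identity \eqref{eq:two-shadow-dual-monomials} and multiply through by $z_2^2$. The left side becomes $f_{R,\vr,p}$. On the right, I will check term by term that each summand becomes the corresponding summand of \eqref{eq:qDecomp-half-strip}:
\begin{itemize}
\item $z_2^2(1,x_1,x_2)\Lambda_1(\cdot)^\top$ becomes the quadratic form of $\Lambda_1$ in $(z_2,z_2^2,z_1)$.
\item $z_2^2\,x_1(1,x_2)\Lambda_2(\cdot)$ becomes $z_2(z_2,z_1)\Lambda_2(\cdot)$.
\item $z_2^2\,x_2(1-x_1x_2)$ becomes $z_1z_2(1-z_1)$.
\item $z_2^2\,x_2(1,x_1)\Lambda_3(\cdot)$ becomes $z_1z_2(1,z_2)\Lambda_3(\cdot)$.
\item $z_2^2\,x_1x_2(x_1,1,0)\Lambda_4(\cdot)$ becomes $z_1(z_2^2,z_2,0)\Lambda_4(\cdot)$.
\item The $\Lambda_5$ and $\Lambda_6$ terms are handled analogously. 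For example, $z_2^2\,x_1x_2(x_2,1,0)$ becomes $z_1(z_1,z_2,0)$.
\end{itemize}
It follows that the identity \eqref{eq:qDecomp-half-strip} holds as a polynomial identity if and only if \eqref{eq:two-shadow-dual-monomials} holds as a rational identity on $z_2\neq0$. The latter is equivalent to \eqref{eq:two-shadow-dual-monomials} itself, since two polynomials in $\vx$ agreeing on the open set $x_1>0$ agree everywhere.

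\emph{Step 3: combine the two steps.}

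Theorem~\ref{thm:two-shadow-dual} says that $q\in\Ncone(\Fset)$ if and only if suitable $t_1,\dots,t_{19}$ exist with $\Lambda_i\succeq0$ and \eqref{eq:two-shadow-dual-monomials} holding. Combined with Steps 1 and 2, this gives the theorem.

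The sufficiency direction can also be seen directly. Each summand in \eqref{eq:qDecomp-half-strip} is a PSD quadratic form multiplied by a product of $z_1$, $z_2$, $1-z_1$, all of which are nonnegative on the half-strip.

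\emph{Expected obstacle.}

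The main point of care is the bookkeeping in Step 2. One must confirm that every multiplier times $z_2^2$ yields exactly the stated vectors and multipliers, with no leftover negative powers of $z_2$. The delicate cases are the terms carrying $x_2^2=z_1^2/z_2^2$, namely the $\Lambda_6$ blocks and the $(2,2)$ entries. In each of these, the prefactor must absorb the $z_2^{-2}$.
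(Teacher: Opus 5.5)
Your proposal is correct and follows the paper's route: you show that nonnegativity of $f_{R,\vr,p}$ on the half-strip is equivalent to $q_{R,\vr,p}\in\Ncone(\Fset)$ via $z_1=x_1x_2$, $z_2=x_1$, and then invoke Theorem~\ref{thm:two-shadow-dual}. The differences are minor: you treat the line $z_2=0$ by density and continuity instead of using $R_{22}\ge 0$, and you obtain \eqref{eq:qDecomp-half-strip} by substituting into \eqref{eq:two-shadow-dual-monomials} and multiplying by $z_2^2$ (your term-by-term bookkeeping checks out) instead of by a separate direct computation.
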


\begin{proof}
    We show first that \(f_{R,\vr,p}\) is nonnegative on the half-strip $[0,1]\times[0,\infty)$ if and only if $q_{R,\vr,p}\in\Ncone(\Fset)$. Suppose first \(q_{R,\vr,p}\) is nonnegative on \(\Fset\).  For any $(z_1,z_2)$ on the half-strip, if
\(z_2>0\), then \((z_2,z_1/z_2)\in\Fset\) and hence $f_{R,\vr,p}(z_1,z_2)
=z_2^2q_{R,\vr,p}\left(z_2,{z_1}/{z_2}\right)\ge 0$. Moreover, by Theorem \ref{thm:two-shadow-dual} we can observe that \(R_{22}\geq0\).  Hence when $z_2=0$ we also have $
f_{R,\vr,p}(z_1,0)=R_{22}z_1^2\geq0$.
Conversely, for any \((x_1,x_2)\in\Fset\) with \(x_1>0\), choosing
\(z_2=x_1\) and \(z_1=x_1x_2\) and noting that $(z_1,z_2)$ is on the half-strip, we have $
q_{R,\vr,p}(x_1,x_2)=f_{R,\vr,p}(z_1,z_2)/z_2^2\geq0. 
$
By continuity we also have $q_{R,\vr,p}(x_1,x_2)\ge 0$ at $x_2=0$. Hence $q_{R,\vr,p}\in\Ncone(\Fset)$.

If $f_{R,r,p}$ is nonnegative over the half-strip, then $q_{R,\vr,p}\in\Ncone(\Fset)$, and by Theorem \ref{thm:two-shadow-dual} we know that \eqref{eq:two-shadow-dual} holds, i.e., there exists $t_1,\ldots,t_{19}$ such that $\Lambda_1,\ldots,\Lambda_6\succeq0$. By direct computation we also know that the relation \eqref{eq:qDecomp-half-strip} holds for all $t_1,\ldots,t_{19}$. 
Moreover, if $\Lambda_1,\ldots,\Lambda_6$, it is clear from \eqref{eq:qDecomp-half-strip} that $f_{R,r,p}$ is nonnegative over the half-strip.
\end{proof}

It should be pointed out that the SOS terms appearing in \eqref{eq:qDecomp-half-strip} has degree bound of at most $4$. Since $f_{R,\vr,p}$ is already quartic, such degree bound is optimal.

\section{Extreme rays of \texorpdfstring{$\Ncone(\Fset)$}{N(F)}}\label{sec:NF}

In this section, we prove the major result Theorem~\ref{thm:main}. 
The proof is divided to three parts. In the first part, we show that the rays described in Theorem \ref{thm:main} are all extreme rays of $\Ncone(\Fset)$. Consequently, it suffices to show that any extreme ray of $\Ncone(\Fset)$ is a positive multiple of one listed in the theorem. 
In the second part, we show that any extreme ray $q_{R,\vr,p}\in\Ncone(\Fset)$ with positive semidefinite $R$ has to be a positive multiple of an interior affine-square ray, an axis-square ray, $x_1$, or $x_2$. The proof is based on the conic hull characterization of all convex quadratics $q_{R,\vr,p}$ that spans an extreme rays in $\Ncone(\Fset)$.
In the third part, we show that any extreme ray $q_{R,\vr,p}\in\Ncone(\Fset)$ with $R\not\succeq 0$ are either $x_1x_2$, $1-x_1x_2$, a lifted tangent ray, or a bitangent ray. The last part is the most challenging. Unlike the second part, due to nonconvexity we could not easily characterize the set of all nonconvex quadratics that spans an extreme ray. Interestingly, the challenge from nonconvexity also brings us an opportunity that never appeared in the convex case: the nonnegativity on $F$ is equivalent to that on $\partial F$ for nonconvex quadratics. In the non-PSD case, such boundary property along with extremality enforce great restrictions on where the zeros of an extreme ray could appear. Consequently, we are able to decompose the characterization task to a small number of scenarios. To the best of our knowledge, our strategy in the non-PSD case has not yet appeared in any literature on characterizing lifted convex hulls. We believe that this strategy, in its own right, merits further examination and may inform future research.

We first establish
membership and extremality of all listed rays and then prove exhaustiveness,
treating the PSD and non-PSD quadratic parts separately. Throughout, we
identify \(q_{R,\vr,p}\) with its coefficient tuple and, when no confusion
can arise, with the ray it generates.

\subsection{Membership and extremality}
In this subsection, we show that all rays listed in Theorem \ref{thm:main} are extreme rays of $\Ncone(\Fset)$. We start by showing that they all belong to $\Ncone(\Fset)$.

\begin{proposition}[Membership of the candidate rays]
\label{prop:membership}
Every polynomial listed in Theorem~\ref{thm:main} is nonnegative on
\(\Fset\).
\end{proposition}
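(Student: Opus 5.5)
The plan is to check each family in Theorem~\ref{thm:main} directly on $\Fset$, writing each polynomial as a sum of terms that are visibly nonnegative there.

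\textbf{Families 1--4.} These are immediate. Squares of affine functions and $x_1^2$, $x_2^2$ are nonnegative on all of $\R^2$. The functions $x_1$, $x_2$ and $x_1x_2$ are nonnegative because $x_1,x_2\ge0$ on $\Fset$. Finally, $1-x_1x_2\ge0$ holds by the definition of $\Fset$.

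\textbf{Lifted tangent rays.} For $h_w$ with $w>0$, I would complete a square in the variables $\sqrt{x_1},\sqrt{x_2}$ and use the hyperbola constraint:
\[
h_w(\vx)=x_1+w^2x_2-2wx_1x_2=\bigl(\sqrt{x_1}-w\sqrt{x_2}\bigr)^2+2w\sqrt{x_1x_2}\bigl(1-\sqrt{x_1x_2}\bigr).
\]
Both terms are nonnegative on $\Fset$, since $0\le\sqrt{x_1x_2}\le1$ there. Alternatively, one can pull back the SOS identity $AB+w^2BC-2wAC=\cdots$ from the proof of Theorem~\ref{thm:PisSOS} through the surjection $f_0$, using the density argument of Lemma~\ref{lem:q-P}.

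\textbf{Bitangent rays.} This is the main obstacle, since $g^1_{u,v}$ has a negative $x_1x_2$ coefficient. The cleanest route is to reuse Theorem~\ref{thm:PisSOS}. There, $B^2g^1_{u,v}(A/B,C/B)$ was written explicitly as a sum of squares with nonnegative weights $2(v-u)$ and $(7v+u)(v-u)^2/8$, which are nonnegative for $0\le u<v$. By Lemma~\ref{lem:q-P}, nonnegativity of the octic $P$ is equivalent to nonnegativity of $q$ on $\Fset$, so $g^1_{u,v}\in\Ncone(\Fset)$. The family $g^2_{u,v}$ follows the same way from its displayed SOS identity, or directly by the symmetry $x_1\leftrightarrow x_2$, which preserves $\Fset$.

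If a self-contained argument avoiding the octic is preferred, I would instead use the certificate structure of Proposition~\ref{prop:family-II-sdr} evaluated at a rank-one point. Take $X=\vx\vx^\top$ and $s=x_1^2x_2$. The identity in that proof then gives
\[
g^1_{u,v}(\vx)=x_1x_2\,(x_1-v)^2+(1-x_1x_2)(x_1-u)^2+2(v-u)\,x_2\,(vx_1-x_1)^2\text{-type terms}.
\]
More precisely, it is the quadratic form of the matrices in \eqref{eq:CHG1_sdr} at the vectors $(-1,v,u)$ and $(v,-1)$. Each term is a product of a constraint nonnegative on $\Fset$ (namely $x_1x_2$, $1-x_1x_2$, or $x_2$) with a square, and the coefficient $2(v-u)$ is nonnegative.

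I expect the only real work to be verifying this polynomial identity by expansion. Assuming the displayed computations in Theorem~\ref{thm:PisSOS} and Proposition~\ref{prop:family-II-sdr} as given, the argument is short.
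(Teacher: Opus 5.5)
Your argument is correct. For families 1 through 5 it is the paper's argument: your completed square for $h_w$ is just the AM--GM step $x_1+w^2x_2\ge 2w\sqrt{x_1x_2}\ge 2wx_1x_2$.

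For the bitangent rays you take a different route. The paper fixes $x_1>0$ and notes that $g^1_{u,v}$ is affine in $x_2$ on $[0,x_1^{-1}]$. It then checks the two endpoints via $g^1_{u,v}(x_1,0)=(x_1-u)^2$ and $x_1g^1_{u,v}(x_1,x_1^{-1})=(x_1-v)^2\bigl(x_1+2(v-u)\bigr)$. This is elementary and uses nothing from later sections.

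Your first route pulls back the octic identity from Theorem~\ref{thm:PisSOS} through the reverse direction of Lemma~\ref{lem:q-P}. It is not circular: the identity is a pure polynomial expansion, and it does check out coefficient by coefficient. That direction of the lemma uses only surjectivity of $f_0$ and continuity. It is, however, much heavier machinery than the claim needs.

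Your second route is the best of the three once the display is repaired. The factor $(vx_1-x_1)^2$ is wrong: pairing $(v,-1)$ with the rank-one block $x_2(1,x_1)^\top(1,x_1)$ gives $x_2(x_1-v)^2$. The exact identity is
\[
g^1_{u,v}(\vx)=x_1x_2(x_1-v)^2+(1-x_1x_2)(x_1-u)^2+2(v-u)\,x_2(x_1-v)^2,
\]
in which the cubic terms $\pm2(v-u)x_1^2x_2$ cancel. This certificate has three advantages over the paper's proof:
\begin{enumerate}
\item it avoids the case split $x_1=0$ versus $x_1>0$;
\item it displays the contacts $(u,0)$ and $(v,v^{-1})$ directly;
\item it is exactly a certificate of the form in Theorem~\ref{thm:two-shadow-dual}, with $\Lambda_4=(-1,v,u)^\top(-1,v,u)$, $\Lambda_3=2(v-u)(v,-1)^\top(v,-1)$, and all other blocks zero.
\end{enumerate}
You should state it in this precise form rather than as ``type terms.''
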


\begin{proof}
The affine-square and axis-square polynomials are nonnegative on all of
\(\mathbb R^2\). The polynomials
$
x_1,x_2,x_1x_2$, and $1-x_1x_2
$
are nonnegative on \(\Fset\) by their respective defining inequalities. For lifted tangent rays $h_w$, let us fix any \(w>0\). For every \(\vx\in\Fset\), the
arithmetic--geometric mean inequality and the fact that \(0\leq x_1x_2\leq1\) give
$
x_1+w^2x_2\geq 2w\sqrt{x_1x_2}\geq 2wx_1x_2.
$
Hence \(h_w\) is nonnegative on \(\Fset\).
For bitangent rays, first consider $g_{u,v}^{1}$ with any fixed \(0\leq u<v\). If \(\vx\in\Fset\) and \(x_1=0\), then
$
g^{1}_{u,v}(0,x_2) = u^2+2v^2(v-u)x_2 \geq 0.
$
If $\vx\in \Fset$ and $x_1>0$, noting that 
$
x_2\in[0, x_1^{-1}]
$
and that \(g^{1}_{u,v}\) is affine in \(x_2\),
it suffices to check nonnegativity at the two endpoints of the interval. At \(x_2=0\), we have
$
g^{1}_{u,v}(x_1,0)=(x_1-u)^2 \geq 0.
$
At \(x_2=x_1^{-1}\), direct factorization gives
$
x_1\,g^{1}_{u,v}(x_1,x_1^{-1}) = (x_1-v)^2\bigl(x_1+2(v-u)\bigr) \geq 0.
$
Thus \(g^{1}_{u,v}\) is nonnegative on \(\Fset\). The claim for \(g^{2}_{u,v}\)
follows by exchanging \(x_1\) and \(x_2\) in the above argument.
\end{proof}

Next, we establish extremality of every rays of interest. Here we will repeatedly use the following observation: If
$
q=q_1+q_2$ where $q_1,q_2 \in \Ncone(\Fset),
$
then every zero \((x_1,x_2)\) of \(q\) in \(\Fset\) has to be a zero of both \(q_1\) and
\(q_2\), i.e., $q_1(x_1,x_2)=q_2(x_1,x_2)=0$. 
In the following proposition, we establish extremality of rays with elementary structure of zeros; their zeros on $\Fset$ form a line
segment, a coordinate axis, or the positive hyperbola.

\begin{proposition}[Extremality of the elementary rays]
\label{prop:elementary-extreme}
The following polynomials span extreme rays of \(\Ncone(\Fset)\):
\begin{enumerate}
    \item \(\ell^2\), where \(\ell\) is a nonconstant affine function
    whose zero set meets \(\operatorname{int}(\Fset)\);
    \item \(x_1^2\), \(x_2^2\), \(x_1\), \(x_2\), and \(x_1x_2\);
    \item \(1-x_1x_2\).
\end{enumerate}
\end{proposition}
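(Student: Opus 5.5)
Each case follows the same template. Suppose $q=q_1+q_2$ with $q_1,q_2\in\Ncone(\Fset)$. By the observation preceding the proposition, every zero of $q$ in $\Fset$ is a zero of $q_1$ and of $q_2$. Beyond that, at a zero $\vx^*$ of $q$ we also get first-order information. If $\vx^*\in\operatorname{int}(\Fset)$, then $\nabla q_i(\vx^*)=0$. If $\vx^*$ lies on a smooth boundary arc, then the tangential derivative of $q_i$ vanishes there, since $q_i\ge0$ on $\Fset$ and $q_i(\vx^*)=0$. These vanishing conditions are linear in the six coefficients $(R,\vr,p)$ of $q_i$. We then show that the linear conditions, together with $q_i\ge0$ on $\Fset$, force $q_i$ to be a nonnegative multiple of $q$.

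For $\ell^2$, the zero set $\{\ell=0\}$ is a line meeting $\operatorname{int}(\Fset)$, so it contains a nondegenerate open segment of interior points. A quadratic $q_i$ vanishing on an open segment of a line vanishes on the whole line, so $q_i=\ell\cdot m$ with $m$ affine. Since $q_i\ge0$ near interior points on both sides of the line, $m$ must change sign exactly where $\ell$ does. Hence $m=c\ell$ with $c\ge0$.

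The case $1-x_1x_2$ is similar. Here $q_i$ vanishes on the whole arc $\{x_1x_2=1,\ x_1>0\}$. Substituting $x_2=1/x_1$ and multiplying by $x_1^2$ gives a polynomial in $x_1$ of degree at most $4$ with infinitely many zeros, so it is identically zero. Comparing coefficients shows $q_i=c(1-x_1x_2)$, and $c\ge0$ follows from evaluating at the origin.

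For the coordinate-type rays we use the zeros on the axes. Take $x_1$ first. It vanishes on the entire ray $\{x_1=0,\ x_2\ge0\}$, so $q_i(0,x_2)=R_{22}x_2^2+2r_2x_2+p\equiv0$ and $q_i=x_1(R_{11}x_1+2R_{12}x_2+2r_1)$. Then $q_1+q_2=x_1$ forces $R_{11}^{(1)}+R_{11}^{(2)}=0$ and $R_{12}^{(1)}+R_{12}^{(2)}=0$. Nonnegativity of $q_i$ along $x_2=0$ with $x_1\to\infty$ gives $R_{11}^{(i)}\ge0$, so both vanish. For $R_{12}$ we restrict to the hyperbola. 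Near $x_1\to0^+$ with $x_2=1/x_1$, the quantity $q_i/x_1=2R_{12}x_2+2r_1$ must stay nonnegative, which forces $R_{12}^{(i)}\ge0$, and so both vanish too. Hence each $q_i$ is a multiple of $x_1$.

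The ray $x_2$ is symmetric. For $x_1x_2$, the polynomial vanishes on both axes, which forces $q_i=c\,x_1x_2$, and $c\ge0$ is checked at an interior point. For $x_1^2$, vanishing on $\{x_1=0\}$ gives $q_i=x_1 m$ with $m$ affine. Nonnegativity for small $x_1>0$ at every $x_2\ge0$ gives $m(0,x_2)\ge0$. Since $q_1+q_2=x_1^2$, we have $m_1(0,\cdot)+m_2(0,\cdot)\equiv0$, so each $m_i(0,\cdot)\equiv0$ and $q_i=c\,x_1^2$.

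The only delicate point is the first-order argument along a boundary component, which is exactly where the sign constraints from the unbounded axes and from the hyperbola have to be used. I expect the $R_{12}$ sign argument in the $x_1$ case to be the main thing to get right. Everything else is coefficient comparison.
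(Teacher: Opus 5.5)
Your proposal is correct and is essentially the paper's argument. It uses divisibility by $\ell$ plus a sign argument across the line for $\ell^2$, and the identically vanishing quartic $t^2q_i(t,t^{-1})$ for $1-x_1x_2$. For the coordinate rays it uses vanishing on the $x_2$-axis together with sign limits along the $x_1$-axis and the hyperbola.

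The differences are cosmetic. You handle $x_1$, $x_1x_2$ and $x_1^2$ separately, using both axes for $x_1x_2$ and the sign of $m_i(0,\cdot)$ for $x_1^2$. The paper instead shows once that every summand lies in $\cone\{x_1^2,x_1x_2,x_1\}$ and then compares coefficients. Also, the first-order (tangential-derivative) conditions announced in your opening template are never actually used.
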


\begin{proof}
Let
$
q=q_1+q_2$ where $q_1,q_2\in\Ncone(\Fset).
$
We will show that, if $q$ is any ray listed in the proposition, then $q_1$ and $q_2$ are both nonnegative multiples of $q$. 

For interior affine-square rays $\ell^2$, suppose that \(q=\ell^2\), and consider the line
$
L:=\{\vx \in \R^2 \mid \ell(\vx)=0\}.
$
Observe that the zeros of $\ell^2$ on $F$, denoted by \(I:=L\cap\operatorname{int}(\Fset)\), is a
nondegenerate relatively open line segment, and that each \(q_i\) vanishes on \(I\).
A quadratic polynomial that vanishes on a nondegenerate segment of the
line \(L\) is divisible by its defining affine linear polynomial.
Consequently,
$
q_i=\ell m_i
$
for some affine linear polynomial \(m_i\).
Note that \(m_i\) has to vanish on \(I\), since we can derive a contradiction if there would be
a point \(z\in I\) at which \(m_i(z)\neq0\), elaborated as follows.  In a sufficiently small relative
neighborhood of \(z\), contained in \(\operatorname{int}(\Fset)\), the
function \(m_i\) would retain a fixed nonzero sign, whereas \(\ell\)
changes sign across \(L\).  The product \(\ell m_i\) would then be
negative on one side of \(L\), contradicting \(q_i\geq0\) on \(\Fset\).
Thus affine linear polynomials \(m_i\) and $\ell$ both vanish on the nondegenerate segment \(I\). This implies that
$
m_i=\lambda_i\ell
$
for some scalar \(\lambda_i\) and hence
$
q_i=\lambda_i\ell^2
$ for $i\in\{1,2\}$.
Nonnegativity of \(q_i\) gives \(\lambda_i\geq0\), so the ray generated
by \(\ell^2\) is extreme.

For axis-square and coordinate rays, 
Suppose that
\(
q\in\{x_1^2,x_1,x_1x_2\}.
\)
Observe that each such \(q\) vanishes on the nonnegative \(x_2\)-axis; hence each
summand \(q_i\), $i\in\{1,2\}$, vanishes there as well. Thus \(q_i\) has to be of form \(
q_i(\vx)=x_1\bigl(a_i x_1+b_i x_2+c_i\bigr)
\)
for some scalars \(a_i,b_i,c_i\).
Nonnegativity on the positive \(x_1\)-axis and the positive hyperbola gives
\[
q_i(t,0)=t(a_it+c_i)\geq0,
\qquad
q_i(t,t^{-1})=a_it^2+b_i+c_it\geq0
\qquad(t>0).
\]
Dividing the first inequality by \(t\) and letting \(t\downarrow0\) and
\(t\to\infty\) gives \(c_i,a_i\geq0\), respectively. Letting
\(t\downarrow0\) in the second gives
\(
b_i\geq0.
\)
Thus every \(q_i\) belongs to
\(
\cone\{x_1^2,x_1x_2,x_1\}.
\)
Coefficient comparison proves extremality of each of the three
generators. Exchanging \(x_1\) and \(x_2\) proves the claims for
\(x_2^2\) and \(x_2\).

Finally, for the hyperbola ray, suppose that
$
q=1-x_1x_2.
$
Observe that \(q\) vanishes on the entire positive hyperbola
$
\{(t,t^{-1}):t>0\}
$
and hence $q_1$ and $q_2$ both vanish there.  For any $i\in \{1,2\}$, writing
\(
q_i(\vx)
=a_i x_1^2+b_i x_1x_2+c_i x_2^2+d_i x_1+e_i x_2+f_i,
\)
from the observation we have that 
\(
t^2q_i(t,t^{-1})
=a_i t^4+d_i t^3+(b_i+f_i)t^2+e_i t+c_i
\)
vanishes for every \(t>0\). Hence $t^2q_i(t,t^{-1})$ is the zero polynomial, i.e., 
$a_i=c_i=d_i=e_i=0$ and 
$b_i+f_i=0$.
It follows that
\(
q_i=-b_i(1-x_1x_2).
\)
Since
\(
-b_i=q_i(0,0)\geq0,
\)
$q_i$ is a nonnegative multiple of \(1-x_1x_2\).  Thus \(1-x_1x_2\)
also spans an extreme ray.
\end{proof}

The extremality of the lifted tangent and bitangent rays are more involved. We will prove the results in the following propositions.

\begin{proposition}[Extremality of the lifted tangent rays]
\label{prop:tangent-extreme}
For every \(w>0\), the polynomial
\(
h_w(\vx)=x_1+w^2x_2-2wx_1x_2
\)
spans an extreme ray of \(\Ncone(\Fset)\).
\end{proposition}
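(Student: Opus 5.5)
We follow the same zero-set strategy as in Proposition~\ref{prop:elementary-extreme}. Suppose \(h_w=q_1+q_2\) with \(q_1,q_2\in\Ncone(\Fset)\). Then every zero of \(h_w\) in \(\Fset\) is a zero of both \(q_i\). Moreover, since \(q_i\le h_w\) on \(\Fset\), each \(q_i\) must vanish to the same order at any zero of \(h_w\) lying in the relative interior of a smooth boundary piece. Concretely, at such a zero the directional derivatives of \(q_i\) tangent to that boundary piece must vanish.

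The first step is to locate the zeros of \(h_w\) on \(\Fset\). From the AM--GM chain \(x_1+w^2x_2\ge 2w\sqrt{x_1x_2}\ge 2wx_1x_2\), equality forces \(x_1=w^2x_2\) together with either \(x_1x_2\in\{0,1\}\). This gives two zeros: the origin, and the point \(z_w:=(w,w^{-1})\) on the positive hyperbola. At \(z_w\) we also have tangency: along the hyperbola, \(t\,h_w(t,t^{-1})=(t-w)^2\), a double zero at \(t=w\).

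Next we write \(q_i(\vx)=a_ix_1^2+b_ix_1x_2+c_ix_2^2+d_ix_1+e_ix_2+f_i\) and impose these conditions.
\begin{enumerate}
\item Vanishing at the origin gives \(f_i=0\).
\item On the two axes, \(q_i(t,0)=a_it^2+d_it\ge0\) and \(q_i(0,t)=c_it^2+e_it\ge0\) for \(t\ge0\). These force \(a_i,c_i,d_i,e_i\ge0\).
\item The restriction to the hyperbola, \(\varphi_i(t):=t^2q_i(t,t^{-1})=a_it^4+d_it^3+b_it^2+e_it+c_i\), is nonnegative for \(t>0\) and vanishes at \(t=w\). Hence \(t=w\) is at least a double root.
\item Summing, \(\varphi_1+\varphi_2=t^2h_w(t,t^{-1})=t(t-w)^2\), a cubic with vanishing \(t^4\) and constant coefficients. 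Since \(a_i,c_i\ge0\) and they sum to zero, \(a_i=c_i=0\).
\end{enumerate}

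After this, \(\varphi_i(t)=d_it^3+b_it^2+e_it\) is divisible by \(t(t-w)^2\). Hence \(\varphi_i=\lambda_it(t-w)^2\), which yields \(d_i=\lambda_i\), \(b_i=-2w\lambda_i\), and \(e_i=w^2\lambda_i\). Therefore \(q_i=\lambda_ih_w\). Finally, \(d_i\ge0\) gives \(\lambda_i\ge0\), so the ray generated by \(h_w\) is extreme.

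The main obstacle is the coefficient argument in items 2 and 4: showing that the pure quadratic terms \(a_i\) and \(c_i\) are absent. If the additive cancellation argument failed to give \(a_i=c_i=0\) directly, the fallback is to use nonnegativity on the unbounded axes, together with \(a_1+a_2=c_1+c_2=0\), which again forces both to vanish. The double-root step is standard: a nonnegative polynomial on \(t>0\) with an interior zero at \(t=w\) has a zero of even order there.
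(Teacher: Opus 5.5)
Your proposal is correct and follows essentially the same route as the paper: the origin contact gives $f_i=0$, the axes give $a_i,c_i,d_i,e_i\ge0$, coefficient comparison forces $a_i=c_i=0$, and the double contact at $(w,w^{-1})$ pins down $b_i=-2wd_i$ and $e_i=w^2d_i$. The only difference is cosmetic: you express the last step as divisibility of $t^2q_i(t,t^{-1})$ by $t(t-w)^2$, whereas the paper imposes $\phi_i(w)=\phi_i'(w)=0$ on $\phi_i(t)=q_i(t,t^{-1})$.
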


\begin{proof}
Suppose that
\(
h_w=q_1+q_2\) where \(q_1,q_2\in\Ncone(\Fset).
\)
We will first show that for $i\in\{1,2\}$, $q_i$ has to be of form
\(
q_i(\vx)=b_i x_1x_2+d_i x_1+e_i x_2.
\)
Let us write
\(
q_i(\vx)
=a_i x_1^2+b_i x_1x_2+c_i x_2^2+d_i x_1+e_i x_2+f_i.
\)
Observe that \(h_w(0,0)=0\) and hence $q_1$ and $q_2$ both vanish at the origin. Thus
\(
f_i=0.
\)
Also, on the two coordinate axes we have
\(
q_i(t,0)=a_it^2+d_it\geq0\) and \(
q_i(0,t)=c_it^2+e_it\geq0\) for all $
t\geq0$.
It follows that
\(
a_i,c_i,d_i,e_i\geq0.
\)
Because \(h_w\) has no \(x_1^2\)- or \(x_2^2\)-term, coefficient comparison
in \(h_w=q_1+q_2\) gives
\(
a_1+a_2=0\) and \(
c_1+c_2=0.
\)
Thus
\(
a_i=c_i=0.
\)
Therefore, $q_i$ has to be of form
\(
q_i(\vx)=b_i x_1x_2+d_i x_1+e_i x_2
\) where $d_i,e_i\ge 0$.

Next, we will show that $q_i$'s of the above form has to be nonnegative multiples of $h_w$. 
Observing that \(h_w\) vanishes at the hyperbola point
\(
(w,w^{-1}),
\)
each \(q_i\) vanishes there as well. Consider the restriction at the hyperbola
\(
\phi_i(t):=q_i(t,t^{-1})
=b_i+d_it+\frac{e_i}{t},
\)
which is nonnegative on \((0,\infty)\) and satisfies \(\phi_i(w)=b_i+d_iw+\frac{d_iw^2}{w}=0\).
Since \(w\) is an interior local minimizer of \(\phi_i\), we also have 
\(\phi_i'(w)=d_i - \frac{e_i}{w^2}=0.
\)
Solving the $b_i$ and $e_i$ from the inequalities $\phi(w)=\phi'(x)=0$ we have
\(
e_i=d_iw^2
\)
and
\(
b_i=-2wd_i.
\)
Consequently,
\(
q_i
=d_i\bigl(x_1+w^2x_2-2wx_1x_2\bigr)
=d_i h_w.
\)
Since \(d_i\geq0\), $q_i$ is a nonnegative multiple of \(h_w\).
Thus \(h_w\) spans an extreme ray.
\end{proof}

\begin{proposition}[Extremality of the bitangent rays]
\label{prop:bitangent-extreme}
For every \(0\leq u<v\), the polynomial
\(
g^{1}_{u,v}(\vx)
=(x_1-u)^2+(v-u)x_2\bigl(2v^2-(3v-u)x_1\bigr)
\)
spans an extreme ray of \(\Ncone(\Fset)\). The same is also
true for \(g^{2}_{u,v}\).
\end{proposition}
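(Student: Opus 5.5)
The plan is to take a decomposition \(g^{1}_{u,v}=q_1+q_2\) with \(q_1,q_2\in\Ncone(\Fset)\), write
\[
q_i(\vx)=a_i x_1^2+b_i x_1x_2+c_i x_2^2+d_i x_1+e_i x_2+f_i,
\]
and show that \(q_i=a_i\,g^{1}_{u,v}\) with \(a_i\ge0\). As in the proofs of Propositions~\ref{prop:elementary-extreme} and~\ref{prop:tangent-extreme}, the main tool is the observation that every zero of \(g^1_{u,v}\) on \(\Fset\) is a zero of each \(q_i\). At an interior minimizer of a one-variable restriction, the derivative of each \(q_i\) must also vanish. From the proof of Proposition~\ref{prop:membership}, \(g^1_{u,v}\) has two contacts: it restricts to \((x_1-u)^2\) on the \(x_1\)-axis, and \(x_1g^1_{u,v}(x_1,x_1^{-1})=(x_1-v)^2(x_1+2(v-u))\) on the positive hyperbola. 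This gives a double zero at \((v,v^{-1})\), which lies on the open branch since \(v>0\).

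\emph{Step 1: remove \(x_2^2\).} Nonnegativity of \(q_i(0,s)\) as \(s\to\infty\) forces \(c_i\ge0\). Since \(g^1_{u,v}\) has no \(x_2^2\) term, \(c_1+c_2=0\), and therefore \(c_1=c_2=0\).

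\emph{Step 2: the \(x_1\)-axis contact.} The restriction \(q_i(t,0)=a_it^2+d_it+f_i\) is nonnegative for \(t\ge0\), and the two restrictions sum to \((t-u)^2\).
\begin{itemize}
\item If \(u>0\), then \(t=u\) is an interior zero of each restriction, hence a double root, so \(q_i(t,0)=a_i(t-u)^2\).
\item If \(u=0\), the origin forces \(f_i=0\). Nonnegativity near \(t=0^+\) then gives \(d_i\ge0\), while \(d_1+d_2=0\). So again \(q_i(t,0)=a_it^2\).
\end{itemize}
In both cases \(d_i=-2ua_i\) and \(f_i=u^2a_i\). Letting \(t\to\infty\) gives \(a_i\ge0\).

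\emph{Step 3: the hyperbola contact.} Since \(c_i=0\), the function \(\psi_i(t):=t\,q_i(t,t^{-1})=a_it^3+d_it^2+(b_i+f_i)t+e_i\) is a cubic polynomial, nonnegative on \((0,\infty)\), with an interior zero at \(t=v\). Hence \(\psi_i(t)=(t-v)^2(a_it+\beta_i)\) for some \(\beta_i\). Comparing the \(t^2\)-coefficients gives \(\beta_i=d_i+2va_i=2(v-u)a_i\). The \(t\)-coefficient then determines \(b_i=v^2a_i-2v\beta_i-f_i\), and the constant term gives \(e_i=v^2\beta_i\). Thus all six coefficients of \(q_i\) are fixed multiples of \(a_i\). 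Because \(g^1_{u,v}\) itself satisfies every relation above with \(a=1\), it follows that \(q_i=a_i\,g^1_{u,v}\) with \(a_i\ge0\), which proves extremality. The case of \(g^2_{u,v}\) follows by exchanging \(x_1\) and \(x_2\).

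The main obstacle is making sure that the contact conditions leave a one-dimensional solution space. This needs the \(x_2^2\) coefficient to be removed first (otherwise \(\psi_i\) acquires a \(c_i/t\) term and is not a polynomial) and the degenerate case \(u=0\) to be treated separately, where the axis contact sits at the corner rather than in the interior. The parameter count (six coefficients, minus one from \(c_i\), two from the axis and two from the hyperbola) shows that nothing else is needed.
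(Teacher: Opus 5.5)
Your proof is correct and follows the paper's argument: first force \(q_i(t,0)=a_i(t-u)^2\) with \(a_i\ge0\), treating the cases \(u>0\) and \(u=0\) separately; then kill the \(x_2^2\) coefficient using the unbounded \(x_2\)-axis; then use the zero at \((v,v^{-1})\) to pin down the remaining two coefficients. The only cosmetic difference is in that last step: you clear the denominator and factor the cubic \(t\,q_i(t,t^{-1})\) by \((t-v)^2\), whereas the paper imposes \(\phi_i(v)=\phi_i'(v)=0\), and both give the same two linear conditions.
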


\begin{proof}
Suppose that
\(
g^{1}_{u,v}=q_1+q_2\) where \(
q_1,q_2\in\Ncone(\Fset).
\)
On the nonnegative \(x_1\)-axis we have
\(
g^{1}_{u,v}(t,0)=(t-u)^2.
\)
We first prove that, for each \(i\in\{1,2\}\),
\begin{equation}
q_i(t,0)=\lambda_i(t-u)^2
\qquad\text{for some }\lambda_i\geq0.
\label{eq:bitangent-axis-summands}
\end{equation}
In fact, if \(u>0\), then observing that $(u,0)$ is a zero of $g_{u,v}^1$ we have \(q_i(u,0)=0\). Thus \(u\) is an interior zero of the
nonnegative quadratic \(q_i(\,\cdot\,,0)\) and is therefore a double
root, implying \eqref{eq:bitangent-axis-summands}.
If \(u=0\), then we can observe that $g_{u,v}^1$ vanishes at the original and hence $q_1$ and $q_2$ also vanishes there. Thus $q_i(t,0)$ has to be of form
\(
q_i(t,0)=t(\alpha_i t+\beta_i).
\)
As $q_i(t,0)\ge 0$ for all $t\ge 0$ we have $\alpha_i,\beta_i\ge 0$. But $q_1(t,0) + q_2(t,0)  = q_{u,v}^1(t,0) = t^2$, so $\beta_1+\beta_0=0$ and consequently $\beta_1=\beta_2=0$. Hence $q_1$ and $q_2$ are both nonnegative multiple of $t^2$, proving \eqref{eq:bitangent-axis-summands}.

Based on \eqref{eq:bitangent-axis-summands}, we can continue to show that for any $i\in\{1,2\}$, the form of $q_i$ has to be
\(q_i(\vx)
=\lambda_i(x_1-u)^2+b_i x_1x_2+e_i x_2
\)
for some $\lambda_i$, $b_i$, and $e_i$, where $\lambda_i\ge 0$. 
To see this, let \(c_i\) denote the coefficient of \(x_2^2\) in \(q_i\).  Because
\(q_i\) is nonnegative on the unbounded nonnegative \(x_2\)-axis, one must
have \(c_i\geq0\). Since \(g^{1}_{u,v}\) has no \(x_2^2\)-term,
\(c_1+c_2=0\), and hence \(c_i=0\). Together with
\eqref{eq:bitangent-axis-summands}, this gives
the desired form \(
q_i(\vx)
=\lambda_i(x_1-u)^2+b_i x_1x_2+e_i x_2.
\)

To finish the proof it suffices to show that any $q_i$ of the above form is a nonnegative multiplier of $g_{u,v}^1$. Observe that \(g^{1}_{u,v}\) vanishes at
\(
(v,v^{-1}).
\)
and hence each \(q_i\) vanishes there as well.  Its restriction to the
hyperbola is
\(
\phi_i(t):=q_i(t,t^{-1})
=\lambda_i(t-u)^2+b_i+\frac{e_i}{t}.
\)
This function is nonnegative on \((0,\infty)\) and satisfies
\(\phi_i(v)=0\) at $v>0$. Solving $b_i$ and $e_i$ from the relation $\phi(v) = \phi'(v) = 0$ we have
\(
b_i=-\lambda_i(v-u)(3v-u).
\)
and
\(
e_i=2\lambda_i v^2(v-u).
\)
It follows that
\[
q_i(\vx)
=\lambda_i\!\left[
(x_1-u)^2-(v-u)(3v-u)x_1x_2+2v^2(v-u)x_2
\right]
=\lambda_i g^{1}_{u,v}(\vx).
\]
Since \(\lambda_i\geq0\), both summands are nonnegative multiples of
\(g^{1}_{u,v}\).  

We can now conclude that \(g^{1}_{u,v}\) spans an extreme ray. Exchanging \(x_1\) and \(x_2\) proves the corresponding statement for
\(g^{2}_{u,v}\).
\end{proof}

\subsection{The PSD case}
\label{subsec:psd-case}

In this subsection, we classify all the extreme rays $q_{R,\vr,p}\in\Ncone(\Fset)$ in which $R\succeq 0$. To accomplish the task, we will first classify all rays $q_{R,\vr,p}\in\Ncone(\Fset)$ with $R\succeq 0$, and then remove all the non-extreme rays among them. 

The classification for all rays with $R\succeq 0$ relies on the following convex geometry observation: for convex quadratics $q_{R,\vr,p}\in\Ncone(\Fset)$, its nonnegativity can be extended to one
of the two simpler supersets of $F$: $q_{R,\vr,p}$ would either be nonnegative on $G_1:=\{\vx\in\R^2 \mid x_1x_2 \leq 1\}$ or $G_2:=\R_+^2$. 

\begin{lemma}
\label{lem:convex-extension}
Suppose that \(q_{R,\vr,p}\in \Ncone(\Fset)\) satisfies \(R\succeq0\). Then \(q_{R,\vr,p}\) is
nonnegative on either \(\Gone\) or \(\Gtwo\).
\end{lemma}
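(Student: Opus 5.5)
The plan is a short contrapositive argument. It uses only the convexity of $q:=q_{R,\vr,p}$, which follows from $R\succeq0$, and the way $\Fset$ sits inside $\Gone$ and $\Gtwo$. Suppose $q$ is nonnegative on neither set. Then there are points $\mathbf a\in\Gone$ and $\mathbf b\in\Gtwo$ with $q(\mathbf a)<0$ and $q(\mathbf b)<0$. Since $q\geq0$ on $\Fset$, neither point lies in $\Fset$. We will find a point of $\Fset$ on the segment $[\mathbf a,\mathbf b]$ where $q<0$, which is a contradiction.

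First we locate the two points. Note that $\Gone\cap\Gtwo=\Fset$. Hence $\mathbf a\in\Gone\setminus\Fset$ forces $\mathbf a\notin\R_+^2$, so $a_1<0$ or $a_2<0$. Likewise $\mathbf b\in\Gtwo\setminus\Fset$ forces $b_1,b_2\geq0$ and $b_1b_2>1$. The last inequality gives $b_1,b_2>0$, so $\mathbf b$ lies in the interior of $\R_+^2$.

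Next we use connectedness along the segment. The path $\lambda\mapsto(1-\lambda)\mathbf a+\lambda\mathbf b$, $\lambda\in[0,1]$, starts outside the closed quadrant $\R_+^2$ and ends in its interior. It therefore meets the boundary $\partial\R_+^2$ at some $\lambda^*\in(0,1)$, at a point $\mathbf z$ of the form $(t,0)$ or $(0,t)$ with $t\geq0$. Concretely, take $\lambda^*$ to be the largest $\lambda$ with $\min\{z_1(\lambda),z_2(\lambda)\}\le 0$; by continuity this minimum equals $0$ there. Every such point satisfies $z_1z_2=0\leq1$, so $\mathbf z\in\Fset$.

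Finally, convexity of $q$ along the segment gives
\[
q(\mathbf z)\leq(1-\lambda^*)q(\mathbf a)+\lambda^*q(\mathbf b)<0,
\]
which contradicts $q\in\Ncone(\Fset)$. Hence $q$ is nonnegative on $\Gone$ or on $\Gtwo$.

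The only step that needs any care is the observation that the whole boundary of the quadrant (both nonnegative axes) lies in $\Fset$, together with the identity $\Gone\cap\Gtwo=\Fset$ used to place $\mathbf a$ outside the quadrant. Once these are noted, the argument is a one-line convexity estimate, and I do not expect any further obstacle.
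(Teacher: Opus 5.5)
Your proof is correct and follows the same route as the paper. Both arguments assume $q<0$ at some $\mathbf a\in\Gone\setminus\Gtwo$ and some $\mathbf b\in\Gtwo\setminus\Gone$, find a point of $\Fset$ on the segment $[\mathbf a,\mathbf b]$, and then use convexity of $q$ to get a contradiction.

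The only difference is which point of $\Fset$ is used. You take the segment's crossing with the nonnegative axes directly; it lies in $\Fset$ because $z_1z_2=0$. The paper passes through that same first-exit point from $\R_{++}^2$, but only as a step in an intermediate value argument on $x_1x_2$ that yields a point on the hyperbola. Your version is therefore slightly more direct.
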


\begin{proof}
Suppose by the contrary that there exists $q:=q_{R,\vx,p}\in \Ncone(\Fset)$ with $R\succeq 0$, but neither conclusion holds. Observing that
\(\Fset=\Gone\cap\Gtwo\), there must exist
\(
\vx^-\in\Gone\setminus\Gtwo\)
and
\(\vx^+\in\Gtwo\setminus\Gone
\)
such that \(q(\vx^-),q(\vx^+)<0\). In particular,
\(\vx^+\in\R_{++}^2\) and \(x_1^+x_2^+>1\), whereas
\(\vx^-\notin\R_+^2\).

We claim that there exists \(\bar\vx\in\Fset\) that lies on the line segment between \(\vx^+\) and \(\vx^-\). By the claim and the convexity of $q$, we have \(\bar\vx=t\vx^-+(1-t)\vx^+\) for some
\(t\in(0,1)\) and
$
q(\bar\vx)
\leq t q(\vx^-)+(1-t)q(\vx^+)<0,
$
contradicting the assumption that $q$ is nonnegative on \(\Fset\).

To prove the claim, follow the segment from \(\vx^+\) to \(\vx^-\). At its first exit from
\(\R_{++}^2\), one coordinate is zero. Along the preceding portion, the
coordinate product varies continuously from a value greater than \(1\)
to \(0\). Hence the segment contains a point
\(
\bar\vx\in\R_{++}^2\) that satisfies \(
\bar x_1\bar x_2=1.
\)
Thus \(\bar\vx\in\Fset\). 
\end{proof}

By the above convex geometry observation, we are ready to characterize all rays $q_{R,\vr,p}\in\Ncone(\Fset)$ with $R\succeq 0$.

\begin{lemma}
\label{lem:psd-conic-reduction}
Suppose that \(q_{R,\vr,p}\in \Ncone(\Fset)\) satisfies \(R\succeq0\), then \(q_{R,\vr,p}\) is a conic combination
of affine square rays and members of
\[
\{x_1,x_2,x_1x_2,1-x_1x_2\}.
\]
\end{lemma}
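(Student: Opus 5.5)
The plan is to start from Lemma~\ref{lem:convex-extension}. Since $R\succeq0$, the quadratic $q:=q_{R,\vr,p}$ is nonnegative on $\Gone$ or on $\Gtwo$. I will handle the two alternatives separately. In each case the aim is to split $q$ into a part that is nonnegative on all of $\R^2$ and a part that is visibly a conic combination of $x_1,x_2,x_1x_2,1-x_1x_2$.

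\emph{Case $q\geq0$ on $\Gone=\{1-x_1x_2\geq0\}$.} I will apply the S-lemma. The constraint $1-x_1x_2\geq0$ is a single quadratic inequality with a Slater point (the origin), so there is $\lambda\geq0$ such that $q(\vx)-\lambda(1-x_1x_2)\geq0$ for all $\vx\in\R^2$. A quadratic polynomial nonnegative on $\R^2$ has a positive semidefinite homogenized coefficient matrix $\begin{pmatrix}p'&\vr'^\top\\ \vr'&R'\end{pmatrix}$. A spectral decomposition of this matrix writes the quadratic as $\sum_k \ell_k(\vx)^2$ with each $\ell_k$ affine. Hence $q=\sum_k\ell_k^2+\lambda(1-x_1x_2)$, as required. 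A constant square $c^2$ is harmless, since $c^2=c^2(1-x_1x_2)+c^2x_1x_2$.

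\emph{Case $q\geq0$ on $\Gtwo=\R_+^2$.} I will homogenize. Put $M:=\begin{pmatrix}p&\vr^\top\\ \vr&R\end{pmatrix}$. For $(t,\vx)\in\R_+^3$ with $t>0$, we have $(t,\vx^\top)M(t,\vx^\top)^\top=t^2q(\vx/t)\geq0$. For $t=0$ the form equals $\vx^\top R\vx\geq0$, because $R\succeq0$. So $M$ is a $3\times3$ copositive matrix. By Diananda's theorem, copositivity coincides with PSD $+$ entrywise nonnegative in dimension at most $4$, so $M=S+N$ with $S\succeq0$ and $N\geq0$ entrywise.

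The $S$ part gives a sum of affine squares, exactly as in the previous case. The $N$ part gives
\[
N_{00}+2N_{01}x_1+2N_{02}x_2+N_{11}x_1^2+N_{22}x_2^2+2N_{12}x_1x_2 .
\]
Each of these monomials lies in the required cone: $x_1^2,x_2^2$ are affine squares, $x_1,x_2,x_1x_2$ are listed generators, and the constant is handled as above.

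The main obstacle is the $\Gtwo$ case. The key points there are recognizing copositivity of the homogenized matrix, which needs $R\succeq0$ to cover the points at infinity, and invoking the small-dimension decomposition $\mathrm{COP}_3=\Smat_+^3+\mathcal N_3$. If one wants to avoid citing Diananda, I would instead argue directly: if $M\succeq0$ we are done; otherwise some off-diagonal entry is positive. Then I would subtract the largest multiple of the corresponding monomial ($x_1$, $x_2$ or $x_1x_2$) that keeps $M$ copositive, and induct on the number of positive off-diagonal entries. I expect this fallback route to be the more delicate to write out fully.
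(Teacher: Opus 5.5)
Your proof is correct and follows the paper's argument step for step: Lemma~\ref{lem:convex-extension}, then the S-lemma with the origin as Slater point on $\Gone$, and on $\Gtwo$ copositivity of the homogenized matrix (using $R\succeq0$ at $t=0$) followed by Diananda's decomposition. Your explicit treatment of the constant term via $c^2=c^2(1-x_1x_2)+c^2x_1x_2$ spells out what the paper only indicates by naming the hyperbola ray, and the fallback induction you sketch is not needed.
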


\begin{proof}
Fix any $q:=q_{R,\vx,p}\in \Ncone(\Fset)$ with $R\succeq 0$. By Lemma~\ref{lem:convex-extension}, \(q\) is nonnegative on \(\Gone\)
or on \(\Gtwo\).

Suppose first that $q$ is nonnegative on $G_1$. Noting that $G_1$ has non-empty interior, by the S-lemma (see, e.g., \cite{polik2007survey}), there exists \(\lambda\geq0\) such that the quadratic
\(
\sigma(\vx):=q(\vx)-\lambda(1-x_1x_2)
\)
is nonnegative everywhere. 
Every globally nonnegative quadratic is a sum
of squares of affine linear polynomials, so
\(
q(\vx)=\sigma(\vx)+\lambda(1-x_1x_2)
\)
is a conic combination of affine square rays and $1-x_1x_2$.

Now suppose that \(q\) is nonnegative on \(\Gtwo\). Observe that
\[
q(\vx) = \mathbf{z}^\top Q\mathbf{z},\text{ where } \mathbf{z}:= \begin{pmatrix}
    1\\ \vx
\end{pmatrix} \text{ and }Q:=
\begin{pmatrix}
p & \vr^\top\\
\vr & R
\end{pmatrix}.
\]
Here $Q$ is a copositive matrix: for \(y=(t,\mathbf u^\top)^\top\in\R_+^3\), if \(t>0\), then
$
\mathbf y^\top Q \mathbf y=t^2q(\mathbf u/t)\geq0,
$
whereas if \(t=0\), then
$
\mathbf y^\top Q \mathbf y=\mathbf u^\top R\mathbf u\geq0.
$
By \cite{diananda1962non}, $Q$ can be decomposed to $Q=S+N$ where \(S\succeq0\) and \(N\in\mathbb S^3\)
is entry-wise nonnegative. Therefore
\(
q(\vx)=\mathbf z^\top S\mathbf z+\mathbf z^\top N\mathbf z.
\)
Here the first quadratic $
\mathbf z^\top S\mathbf z$ is a conic combination of affine squares, while
$
\mathbf z^\top N\mathbf z
=
N_{11}+N_{22}x_1^2+N_{33}x_2^2
+2N_{12}x_1+2N_{13}x_2+2N_{23}x_1x_2
$
is a conic combination of axis-squares, hyperbola, and coordinate rays.
\end{proof}

With the results from the above lemma, we finish the characterization of extreme rays in the PSD case by ruling out all the non-extreme rays among the affine square rays.

\begin{proposition}[Extreme rays in the PSD case]
\label{prop:psd-extremes}
The extreme rays of $\Ncone(\Fset)$ whose quadratic part is positive
semidefinite are precisely the nonnegative multiple of either an interior affine-square ray, an axis-square ray, $x_1$, or $x_2$.
\end{proposition}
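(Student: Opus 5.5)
The argument has two directions. For the ``if'' direction, Proposition~\ref{prop:elementary-extreme} already shows that interior affine-squares, $x_1^2$, $x_2^2$, $x_1$ and $x_2$ span extreme rays. Each of these has positive semidefinite quadratic part: $\ell^2$ has $R=aa^\top$, the axis squares have diagonal $R\succeq0$, and $x_1,x_2$ have $R=0$.

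For the ``only if'' direction, let $q=q_{R,\vr,p}$ span an extreme ray of $\Ncone(\Fset)$ with $R\succeq0$.
\begin{enumerate}
\item By Lemma~\ref{lem:psd-conic-reduction}, $q$ is a finite conic combination of affine squares $\ell^2$ (with $\ell$ nonconstant or constant) and the polynomials $x_1$, $x_2$, $x_1x_2$, $1-x_1x_2$.
\item Each summand lies in $\Ncone(\Fset)$, so extremality forces every nonzero summand to be a positive multiple of $q$.
\item Hence $q$ is a positive multiple of one of: $x_1$; $x_2$; $x_1x_2$; $1-x_1x_2$; a positive constant; or $\ell^2$ with $\ell$ nonconstant.
\end{enumerate}

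We then discard the candidates that are impossible or not extreme.
\begin{enumerate}
\item $x_1x_2$ and $1-x_1x_2$ have quadratic part $\pm\tfrac12\begin{pmatrix}0&1\\1&0\end{pmatrix}$, which is indefinite. This contradicts $R\succeq0$, so these cannot occur. Note that they are genuinely extreme, but they belong to the non-PSD case.
\item A positive constant is not extreme, since $1=(1-x_1x_2)+x_1x_2$ with the two summands not proportional.
\item What remains is $\ell^2$ with $\ell$ nonconstant. If the zero line $L$ of $\ell$ meets $\operatorname{int}(\Fset)$, we have an interior affine-square ray, as claimed.
\end{enumerate}

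The main obstacle is the remaining case, where $L\cap\operatorname{int}(\Fset)=\emptyset$. Here we must show $\ell^2$ is either a multiple of $x_1^2$ or $x_2^2$, or not extreme.
\begin{enumerate}
\item If $L$ is a coordinate axis, then $\ell^2$ is a multiple of $x_1^2$ or $x_2^2$.
\item Otherwise $\ell$ has constant sign on the connected set $\operatorname{int}(\Fset)$, hence on $\Fset$. After possibly replacing $\ell$ by $-\ell$, assume $\ell\ge0$ on $\Fset$.
\item Since $\ell\ge0$ on the unbounded axes, writing $\ell=a x_1+b x_2+c$ gives $a,b\ge0$ and $c\ge0$ from the origin. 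So $\ell\in\cone\{1,x_1,x_2\}$.
\item Expanding, $\ell^2=a^2x_1^2+b^2x_2^2+c^2+2ab\,x_1x_2+2ac\,x_1+2bc\,x_2$, which is a sum of elements of $\Ncone(\Fset)$. If at least two of $a,b,c$ are positive, $\ell^2$ is a sum of non-proportional elements (for instance $2ab\,x_1x_2$ alongside $a^2x_1^2$), so it is not extreme.
\item If only one of $a,b,c$ is positive, then $\ell^2$ is a multiple of $x_1^2$, of $x_2^2$, or a constant, and the constant case was already excluded.
\end{enumerate}
This case analysis completes the classification.
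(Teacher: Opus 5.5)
Your proposal is correct and follows essentially the same route as the paper: you invoke Lemma~\ref{lem:psd-conic-reduction} and extremality to reduce to a single summand, exclude $x_1x_2$ and $1-x_1x_2$ by indefiniteness, and dispose of non-interior affine squares by the sign argument giving $\ell\in\cone\{1,x_1,x_2\}$ together with $1=(1-x_1x_2)+x_1x_2$. The only differences are cosmetic: you treat the constant and coordinate-axis cases separately, whereas the paper folds them into the single ``exactly one coefficient nonzero'' step.
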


\begin{proof}
Each ray listed in this proposition clearly has a positive-semidefinite quadratic part; it also spans an extreme ray by the results in Proposition~\ref{prop:elementary-extreme}. 

Conversely, suppose that \(q:=q_{R,\vr,p}\) spans an extreme ray of
\(\Ncone(\Fset)\) and \(R\succeq0\). By Lemma~\ref{lem:psd-conic-reduction}, \(q\) is a
conic combination of affine squares and members of
\(
\{x_1,x_2,x_1x_2,1-x_1x_2\}.
\)
All rays forming the conic combination clearly belong to $\Ncone(\Fset)$. 
Extremality and convexity of $q$ forces it to be a nonnegative multiple of an affine square ray, $x_1$, or $x_2$.

It remains to determine which affine squares can be extreme. By Proposition \ref{prop:elementary-extreme} we already know that interior affine square rays are extreme rays; it suffices to study on the ones whose zeros that do not meet $\operatorname{int}(F)$. Let
\[
\ell(\vx)=\alpha x_1+\beta x_2+\gamma
\]
be nonzero, and suppose that its zero set does not meet
\(\operatorname{int}(\Fset)\). Since \(\operatorname{int}(\Fset)\) is
connected, \(\ell\) has a constant sign in there. Replacing \(\ell\) by
\(-\ell\) if necessary and noting the fact that 
\(\Fset=\operatorname{cl}(\operatorname{int}(\Fset))\), we may assume that 
\[
\ell(\vx)\geq0
\qquad\forall\,\vx\in\Fset.
\]
Evaluating \(\ell\) on the two unbounded coordinate axes gives $\alpha,\beta,\gamma\ge 0$. 
Consequently, we have the following linear combination description of the affine square ray $\ell^2$:
\[
\ell(\vx)^2 =
\alpha^2x_1^2+\beta^2x_2^2+\gamma^2
+2\alpha\beta x_1x_2
+2\alpha\gamma x_1
+2\beta\gamma x_2.
\]
Every term on the right is nonnegative on \(\Fset\). If at least two of
\(\alpha,\beta,\gamma\) are nonzero, separating one of the nonzero
square terms from the remainder gives a nontrivial conic decomposition
of \(\ell^2\).
If exactly one coefficient is nonzero, then \(\ell^2\) is proportional to
\(x_1^2\), \(x_2^2\), or \(1\). The first two are the axis-square rays
listed. The constant polynomial is not extreme because
\(
1=(1-x_1x_2)+x_1x_2.
\)
Thus the only extreme affine squares whose zero sets do not meet
\(\operatorname{int}(\Fset)\) are \(x_1^2\) and \(x_2^2\).
\end{proof}

\subsection{The non-PSD case} \label{subsec:non-psd}

In this section we classify all the extreme rays $q_{R,\vr,p}\in \Ncone(\Fset)$ in which $R\not\succeq 0$. Unlike the PSD case in which we can start by characterizing the conic hull of all convex rays in $\Ncone(\Fset)$, the rays $q_{R,\vr,p}\in \Ncone(\Fset)$ with $R\not\succeq 0$ no longer form a conic hull. Therefore, the strategy used throughout the previous subsection could not be repeated. Luckily, the nonconvexity of rays $q_{R,\vr,p}$ in $\Ncone(\Fset)$ has an interesting boundary property that is not possessed by convex ones: any nonconvex $q_{R,\vr,p}$ is nonnegative on $F$ if and only if it is nonnegative on the boundary $\partial F$. In this subsection, we will rely on this crucial property to finish the characterization all of extreme rays in the non-PSD case. Specifically, we will show that the boundary property allows us to decompose the extreme ray classification task to a small number of different scenarios, depending on where the zero of the extreme ray is on the boundary.

\subsubsection{Scenario decomposition and boundary property}

We will decompose the extreme ray classification task to a small number of different scenarios. We start with an observation of any $q_{R,\vr,p}\in \Ncone(\Fset)$. Restricting on either the $x_1$- or $x_2$- axis we have $q_{R,\vr,p}(t,0) = R_{11}t^2+2r_1t+p$ and $q_{R,\vr,p}(0,t)
=R_{22}t^2+2r_2t+p.$
Using the fact that a quadratic \(at^2+2bt+c\) is nonnegative on \([0,\infty)\) if and
only if
$
a\geq0$,
 $
c\geq0,$ and 
$b\geq-\sqrt{ac}$, 
we have
\begin{align}
\label{eq:non-PSD-cond}
R_{11},R_{22},p\geq0,
\qquad
r_1\geq-\sqrt{pR_{11}},
\qquad
r_2\geq-\sqrt{pR_{22}}.
\end{align}
If $R\not\succeq 0 $, then $R_{11}R_{22} < R_{12}^2 $. based on the above observation, we have either $R_{12}>\sqrt{R_{11}R_{22}}$ or $R_{12}<-\sqrt{R_{11}R_{22}}$. 
We will show that the former case of $R_{12}$ yields a simple scenario of extreme ray classification, as stated below. 

\begin{proposition}[The simple scenario]
\label{prop:positive-offdiagonal}
Let \(q=q_{R,\vr,p}\) span an extreme ray of \(\Ncone(\Fset)\).  If 
\(R\not\succeq0\) and
\(
R_{12}>\sqrt{R_{11}R_{22}},
\)
then \(q\) is a positive multiple of \(x_1x_2\).
\end{proposition}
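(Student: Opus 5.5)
Assume $R\not\succeq0$ and $R_{12}>\sqrt{R_{11}R_{22}}\ge0$. The plan is to write $q$ as an explicit conic combination of members of $\Ncone(\Fset)$ in which $x_1x_2$ appears with a positive coefficient. Extremality will then force $q$ to be a multiple of $x_1x_2$.

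The natural splitting is
\[
q(\vx)=\bigl(R_{11}x_1^2+2\sqrt{R_{11}R_{22}}\,x_1x_2+R_{22}x_2^2+2r_1x_1+2r_2x_2+p\bigr)+2\bigl(R_{12}-\sqrt{R_{11}R_{22}}\bigr)x_1x_2 .
\]
The second summand is a positive multiple of the coordinate ray $x_1x_2$. The first summand $\tilde q$ has quadratic part $\tilde R$ with $\det\tilde R=0$ and $\tilde R_{11},\tilde R_{22}\ge0$, so $\tilde R\succeq0$.

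The main step is to show $\tilde q\in\Ncone(\Fset)$. I would in fact show $\tilde q\ge0$ on all of $\R_+^2$, which is stronger. The key is that its homogenized matrix
\[
\tilde Q=\begin{pmatrix} p&r_1&r_2\\ r_1&R_{11}&\sqrt{R_{11}R_{22}}\\ r_2&\sqrt{R_{11}R_{22}}&R_{22}\end{pmatrix}
\]
is copositive. Write $\mathbf y=(t,y_1,y_2)\ge0$ and set $a=\sqrt{R_{11}}y_1+\sqrt{R_{22}}y_2\ge0$. Then
\[
\mathbf y^\top\tilde Q\mathbf y=pt^2+2t(r_1y_1+r_2y_2)+a^2 .
\]
By \eqref{eq:non-PSD-cond}, $r_1y_1+r_2y_2\ge-\sqrt p\,a$. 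Hence, using $t\ge0$,
\[
\mathbf y^\top\tilde Q\mathbf y\ge pt^2-2\sqrt p\,ta+a^2=(\sqrt p\,t-a)^2\ge0 .
\]
This gives $\tilde q\ge0$ on $\R_+^2\supseteq\Fset$, so $\tilde q\in\Ncone(\Fset)$. Alternatively one could invoke Lemma~\ref{lem:psd-conic-reduction} once membership is established.

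To conclude, note that $q$ spans an extreme ray and $q=\tilde q+c\,x_1x_2$ with $c>0$, where both summands lie in $\Ncone(\Fset)$. Extremality forces $c\,x_1x_2$ to be a nonnegative multiple of $q$. Since $c>0$, it is a positive multiple, so $q$ is a positive multiple of $x_1x_2$. (Consistently, $\tilde q$ must then also be a multiple of $x_1x_2$; since $\tilde q$ has PSD quadratic part, this means $\tilde q=0$.)

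The only step that needs care is the copositivity bound for $\tilde q$, specifically handling the cases $p=0$ or $R_{ii}=0$. The inequalities in \eqref{eq:non-PSD-cond} still apply there: $r_i\ge0$ when $pR_{ii}=0$, so the same estimate goes through without change.
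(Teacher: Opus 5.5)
Your proof is correct and follows essentially the same approach as the paper. The paper splits off the same term $2(R_{12}-\sqrt{R_{11}R_{22}})x_1x_2$ and writes the remainder explicitly as $s(\vx)^2+2(r_1+\sqrt{pR_{11}})x_1+2(r_2+\sqrt{pR_{22}})x_2$ with $s(\vx)=\sqrt p-\sqrt{R_{11}}x_1-\sqrt{R_{22}}x_2$; your copositivity bound is the homogenized form of this identity, so the only difference is that you verify $\tilde q\ge0$ by an inequality instead of exhibiting it as a conic combination of $s^2$, $x_1$, and $x_2$.
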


\begin{proof}
Attempting a completing the square on $q$, we have
\begin{align*}
q(\vx)
={}&s(\vx)^2
+2\bigl(r_1+\sqrt{pR_{11}}\bigr)x_1
+2\bigl(r_2+\sqrt{pR_{22}}\bigr)x_2\\
&+2\bigl(R_{12}-\sqrt{R_{11}R_{22}}\bigr)x_1x_2,
\end{align*}
where the complete square is $
s(\vx):=\sqrt p-\sqrt{R_{11}}x_1-\sqrt{R_{22}}x_2.
$
In the above completing-square equality, Every term at the right belongs to \(\Ncone(\Fset)\), every coefficient
is nonnegative due to \eqref{eq:non-PSD-cond}, and the coefficient of \(x_1x_2\) is strictly positive.
Extremality of $q$ then forces it to be only a positive multiple of $x_1x_2$.
\end{proof}

It remains to consider the case when 
\begin{equation}
R_{12}<-\sqrt{R_{11}R_{22}}.
\label{eq:negative-branch}
\end{equation}
For this case, the following boundary property will be crucial for us to decompose the extreme ray classification task to a small number of scenarios.

\begin{lemma}[Boundary property]
\label{lem:boundary-reduction}
Suppose that \(R\not\succeq0\). Then $q_{R,\vr,p}$ is nonnegative on $F$ if and only if it is nonnegative on $\partial F$.
\end{lemma}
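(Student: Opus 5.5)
The forward direction is trivial, since $F$ is closed and $\partial F\subseteq F$. For the converse, I would argue by contradiction using the fact that an indefinite or negative semidefinite quadratic form has a direction of nonpositive curvature. Suppose $q:=q_{R,\vr,p}\geq0$ on $\partial F$ but $q(\bar\vx)<0$ for some $\bar\vx\in F$. Then necessarily $\bar\vx\in\operatorname{int}(F)$. Since $R\not\succeq0$, there is a nonzero direction $\mathbf d\in\R^2$ with $\mathbf d^\top R\mathbf d<0$.

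Consider the restriction $\phi(t):=q(\bar\vx+t\mathbf d)$, a univariate quadratic with leading coefficient $\mathbf d^\top R\mathbf d<0$; it is therefore strictly concave. I then need to show that the line $\{\bar\vx+t\mathbf d\}$ leaves $F$ in both directions, i.e. that $T_+:=\sup\{t\ge0:\bar\vx+s\mathbf d\in F\ \forall s\in[0,t]\}$ and the analogous $T_-$ are finite. At those exit parameters the points $\bar\vx+T_+\mathbf d$ and $\bar\vx-T_-\mathbf d$ lie in $\partial F$ (by closedness of $F$ and maximality), so $\phi(T_+),\phi(-T_-)\ge0$. Concavity gives $\phi(0)\ge\min\{\phi(T_+),\phi(-T_-)\}\ge0$ because $0$ is a convex combination of $T_+$ and $-T_-$. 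This contradicts $\phi(0)<0$.

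The key step, and the one I expect to be the main obstacle, is guaranteeing finite exits in both directions. A ray $\bar\vx+t\mathbf d$, $t\ge0$, stays in $F$ forever only if $\mathbf d$ lies in the recession cone of $F$. That cone consists of the two nonnegative coordinate axes: a direction with both components positive eventually violates $x_1x_2\le1$, and any negative component eventually violates nonnegativity. So I must choose $\mathbf d$ so that neither $\pm\mathbf d$ is a multiple of $(1,0)$ or $(0,1)$. This is possible because the set $\{\mathbf d:\mathbf d^\top R\mathbf d<0\}$ is a nonempty open cone, so it contains directions off the two coordinate axes. Alternatively, I would handle the case directly: if $\mathbf d=e_1$, then $R_{11}<0$, which already contradicts the axis condition $R_{11}\ge0$ in \eqref{eq:non-PSD-cond}; the same holds for $e_2$. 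I would rather perturb $\mathbf d$ slightly using openness, since \eqref{eq:non-PSD-cond} was derived under membership in $\Ncone(F)$, which is not yet assumed here. With $\mathbf d$ chosen in this way, both rays exit $F$ and the concavity argument closes the proof.
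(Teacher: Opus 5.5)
Your proposal is correct and follows the same argument as the paper. Both pick $\mathbf d$ with $\mathbf d^\top R\mathbf d<0$, take the connected component of the chord of $F$ through $\bar\vx$, observe that its endpoints lie on $\partial F$, and use concavity of $q$ along that line to reach a contradiction. Your extra step of steering $\mathbf d$ off the coordinate axes is harmless but not needed: from an interior point (where $\bar x_1,\bar x_2>0$), a line in direction $\pm e_1$ or $\pm e_2$ also leaves $F$ in both directions, because either $x_1x_2$ eventually exceeds $1$ or a coordinate becomes negative.
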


\begin{proof}
Only the reverse implication requires proof. Consider any fixed $q:=q_{R,\vr,p}$ that is nonnegative on $\partial F$ with $R\not\succeq 0$. Suppose by the contrary that \(q(\bar\vx)<0\) for some \(\bar\vx\in\operatorname{int}(\Fset)\). Since $R\not\succeq 0$, there exists 
 \(\mathbf d\neq0\) such
that \(\mathbf d^\top R\mathbf d<0\). 
Consider the connected component
\[
\{\alpha\in\mathbb R:\bar\vx+\alpha \mathbf d\in\Fset\}.
\]
This is clearly a bounded interval of from \([\alpha_-,\alpha_+]\) with
\(\alpha_-<0<\alpha_+\) and both endpoints \(\bar\vx + \alpha_\pm \mathbf d\in \partial\Fset\). Restricting on this interval, $q$ is concave, hence its minimum is achieved at an endpoint. Since $q(\bar\vx)<0$, at the endpoints either $q(\bar\vx + \alpha_- \mathbf d)<0 $ or $q(\bar\vx + \alpha_+ \mathbf d)<0$, contradicting the fact that $q$ is nonnegative on $\partial F$.
\end{proof}

With the help of the above boundary property lemma, we are ready to classify the extreme rays whose quadratic part is not positive
semidefinite and satisfy \eqref{eq:negative-branch}. A terminology and a few notations will be used in the sequel. We call \(\bar\vx\in\partial\Fset\) a \emph{contact}
of \(q\) if \(q(\bar\vx)=0\). Denote the union of both nonnegative axes by $\mathcal A$ and the hyperbola portion of the boundary by $\mathcal H$, i.e.,
\[
\mathcal A
:=
\{(t,0):t\geq0\}\cup\{(0,t):t\geq0\},
\qquad
\mathcal H
:=
\{(t,t^{-1}):t>0\},
\]
so that \(\partial\Fset=\mathcal A\cup\mathcal H\).  The restrictions of
\(q=q_{R,\vr,p}\) to these boundary components are encoded by
\begin{align}
A^1_q(t)
&:=q(t,0)
=R_{11}t^2+2r_1t+p,
\label{eq:Ax}\\
A^2_q(t)
&:=q(0,t)
=R_{22}t^2+2r_2t+p,
\label{eq:Ay}\\
P_q(t)
&:=t^2q(t,t^{-1})
=R_{11}t^4+2r_1t^3+(2R_{12}+p)t^2+2r_2t+R_{22}.
\label{eq:P}
\end{align}
Thus \(q\geq0\) on \(\partial\Fset\) if and only if
\[
A^1_q(t),A^2_q(t)\geq0\quad(t\geq0),
\qquad
P_q(t)\geq0\quad(t>0).
\]
In the following proposition we describe a second simple scenario of extreme rays, in which $q$ has no contact on any axis. We rely on the boundary property lemma for this classification.

\begin{proposition}[The scenario without axis contacts]
\label{prop:no-axis-contact}
Let \(q=q_{R,\vr,p}\) span an extreme ray of \(\Ncone(\Fset)\) and
satisfy \eqref{eq:negative-branch}.  If \(q\) has no contact on
\(\mathcal A\), then \(q\) is a positive multiple of
\(1-x_1x_2\).
\end{proposition}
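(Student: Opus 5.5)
The plan is to exhibit a nontrivial decomposition $q=(q-\varepsilon(1-x_1x_2))+\varepsilon(1-x_1x_2)$ for some small $\varepsilon>0$. The second summand is the hyperbola ray. If we can show that the first summand also lies in $\Ncone(\Fset)$, then extremality of $q$ forces $q$ to be a nonnegative multiple of $1-x_1x_2$. Since $q\neq 0$, the multiple is positive.

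\emph{Step 1: the axis restrictions are bounded away from zero.} By hypothesis $A^1_q(t)>0$ and $A^2_q(t)>0$ for all $t\geq0$; in particular $p=A^1_q(0)>0$. By \eqref{eq:non-PSD-cond} we have $R_{11}\geq0$, and we split on its sign.
\begin{itemize}
\item If $R_{11}>0$, then $A^1_q(t)\to\infty$ as $t\to\infty$, so $A^1_q$ attains a positive minimum on $[0,\infty)$.
\item If $R_{11}=0$, then $A^1_q(t)=2r_1t+p$. Nonnegativity on $[0,\infty)$ forces $r_1\geq0$, so $\inf_{t\geq0}A^1_q(t)=p>0$.
\end{itemize}
The same argument applies to $A^2_q$. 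Hence there is $\delta>0$ with $A^1_q,A^2_q\geq\delta$ on $[0,\infty)$.

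\emph{Step 2: the perturbed quadratic is nonnegative on $\partial\Fset$.} Set $\tilde q:=q-\varepsilon(1-x_1x_2)$ with $0<\varepsilon\leq\delta$. On $\mathcal A$ the function $1-x_1x_2$ equals $1$, so $\tilde q=A^i_q-\varepsilon\geq0$ there. On $\mathcal H$ the function $1-x_1x_2$ vanishes, so $\tilde q=q\geq0$ there. Hence $\tilde q\geq0$ on $\partial\Fset=\mathcal A\cup\mathcal H$.

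\emph{Step 3: pass from the boundary to all of $\Fset$.} The quadratic part of $\tilde q$ is $R$ with off-diagonal entry changed to $R_{12}+\varepsilon/2$. Because \eqref{eq:negative-branch} is a strict inequality, we can shrink $\varepsilon$ so that $R_{12}+\varepsilon/2<-\sqrt{R_{11}R_{22}}$ still holds. The quadratic part of $\tilde q$ is then still not PSD, and Lemma~\ref{lem:boundary-reduction} gives $\tilde q\in\Ncone(\Fset)$.

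\emph{Step 4: conclude.} We now have $q=\tilde q+\varepsilon(1-x_1x_2)$ with both summands in $\Ncone(\Fset)$. Extremality of $q$ says $\varepsilon(1-x_1x_2)$ is a nonnegative multiple of $q$. Since $\varepsilon(1-x_1x_2)\neq0$, that multiple is positive, so $q$ is a positive multiple of $1-x_1x_2$.

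The only delicate point is Step 1: a uniform positive lower bound is needed on the unbounded axes, not merely pointwise positivity. The case split on $R_{11}$ and $R_{22}$, combined with \eqref{eq:non-PSD-cond}, handles it.
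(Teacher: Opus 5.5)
Your proof is correct and follows essentially the same route as the paper. You perturb $q$ by $\varepsilon(1-x_1x_2)$, which vanishes on $\mathcal H$, use the positive minimum of $q$ on $\mathcal A$ and the strictness of \eqref{eq:negative-branch} to invoke Lemma~\ref{lem:boundary-reduction}, and conclude by extremality. The differences are cosmetic: the paper uses the symmetric pair $q_\pm=q\pm\varepsilon(1-x_1x_2)$ with $q=(q_++q_-)/2$, whereas you use the one-sided splitting $q=\tilde q+\varepsilon(1-x_1x_2)$, and you spell out the uniform positive lower bound on the axes that the paper only asserts.
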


\begin{proof}
Suppose that $q$ has no zero on $\mathcal A$. 
Noting \eqref{eq:non-PSD-cond} and the fact that $q\in\Ncone(\Fset)$ we can observe that $\min_{\vx\in \mathcal A}q(\vx)$ exists and is positive.
Hence, for all sufficiently small
\(\varepsilon>0\),
\[
q_\pm:=q\pm\varepsilon(1-x_1x_2)
\]
are nonnegative on \(\mathcal A\).  On \(\mathcal H\), the
perturbation vanishes, so \(q_\pm=q\).  By taking \(\varepsilon\)
sufficiently small, the quadratic parts of both perturbations still
satisfy \eqref{eq:negative-branch}.  Lemma~\ref{lem:boundary-reduction}
therefore gives \(q_\pm\in\Ncone(\Fset)\).

Since \(q=(q_++q_-)/2\), extremality implies that \(q_+\) and \(q_-\)
are proportional to \(q\).  Their difference then shows that
\(1-x_1x_2\) is proportional to \(q\), and the proportionality factor
is positive.
\end{proof}

With the help of the boundary condition, we can also show that if an extreme ray $q$ has  contact, the number of contacts is extreme small, and the local of the contacts are limited. In the lemma below, we will show that $q$ has at most one contact on any axis. Note that we do not even need $q$ to be an extreme ray for the following lemma to hold.
\begin{lemma}[At most one axis contact]
\label{lem:at-most-one-axis-contact}
If \(q=q_{R,\vr,p}\in\Ncone(\Fset)\) satisfies
\eqref{eq:negative-branch}, then $q$ has at most one zero on $\mathcal A$.
\end{lemma}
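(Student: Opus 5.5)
The plan is a proof by contradiction: assume that $q$ has two distinct zeros on $\mathcal A$, and produce a point of $\Fset$ where $q<0$. I will only use \eqref{eq:non-PSD-cond}, the standing assumption \eqref{eq:negative-branch} that $R_{12}<-\sqrt{R_{11}R_{22}}\le 0$, and one elementary fact: a quadratic that is nonnegative on $[0,\infty)$ has a double root at every zero $t_0>0$. Since the origin lies on both axes, two distinct contacts either lie on the same closed half-axis, or they are $(a,0)$ and $(0,b)$ with $a,b>0$. I treat these two cases separately.

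\emph{Same axis.} Say both contacts lie on $\{(t,0):t\ge0\}$ (the other axis is symmetric). Then $A^1_q$ is nonnegative on $[0,\infty)$ and has two distinct zeros $0\le a<b$. The zero $b>0$ is a double root, so $A^1_q(t)=R_{11}(t-b)^2$. The second zero then forces $R_{11}=0$, hence $A^1_q\equiv0$, i.e. $R_{11}=r_1=p=0$. By \eqref{eq:non-PSD-cond} this gives $r_2\ge-\sqrt{pR_{22}}=0$, and \eqref{eq:negative-branch} gives $R_{12}<0$. Now
\[
q(x_1,x_2)=x_2\bigl(2R_{12}x_1+R_{22}x_2+2r_2\bigr).
\]
Fix $x_1>r_2/|R_{12}|$ and let $x_2>0$ be small enough that $x_1x_2\le1$ and $R_{22}x_2<2|R_{12}|x_1-2r_2$. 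This point lies in $\Fset$ and has $q<0$, which is a contradiction.

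\emph{Different axes.} Now the contacts are $(a,0)$ and $(0,b)$ with $a,b>0$. Both are interior zeros of $A^1_q$ and $A^2_q$, so both are double roots. This gives $p=R_{11}a^2=R_{22}b^2$, $r_1=-\sqrt{pR_{11}}$ and $r_2=-\sqrt{pR_{22}}$. If $p=0$, then $R_{11}=R_{22}=0$ and $\vr=0$, so $q=2R_{12}x_1x_2$ with $R_{12}<0$, which is negative in $\operatorname{int}(\Fset)$. Otherwise, the completing-the-square identity from the proof of Proposition~\ref{prop:positive-offdiagonal} applies with vanishing linear remainders:
\[
q(\vx)=\bigl(\sqrt{R_{11}}x_1+\sqrt{R_{22}}x_2-\sqrt p\bigr)^2+2\bigl(R_{12}-\sqrt{R_{11}R_{22}}\bigr)x_1x_2 .
\]
Take a point on the segment $\sqrt{R_{11}}x_1+\sqrt{R_{22}}x_2=\sqrt p$ that is close to $(a,0)$. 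There $x_1,x_2>0$ and $x_1x_2$ is small, so the point lies in $\Fset$. The square term vanishes and the second term is strictly negative, so $q<0$, again a contradiction.

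The only delicate step is the case bookkeeping: the origin belongs to both axes, so a contact at the origin must be assigned to the same-axis case, and one must check that the constructed points really satisfy $x_1x_2\le1$. Neither is a serious obstacle. The real engine of the proof is the double-root rigidity at interior axis zeros combined with the sign of $R_{12}$.
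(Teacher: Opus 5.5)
Your proof is correct and follows the paper's structure. It uses the same split into a same-axis case, where double-root rigidity forces $A^1_q\equiv0$, and a two-open-axes case, where both axis restrictions are perfect squares, and it differs only in the final negativity witness: the paper uses $\lim_{t\to\infty}q(t,t^{-1})=2R_{12}<0$ and the one-sided derivative $\partial_{x_2}q(\xi,0)=2(R_{12}\xi+r_2)\ge0$, whereas you exhibit explicit points via $q=x_2(\cdots)$ and the completing-square identity, and you handle explicitly the origin contact and the degenerate case $p=0$, which the paper treats only implicitly.
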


\begin{proof}
We start by showing that $q$ would not have two distinct zeros on the nonnegative $x_1$-axis. 
Consider the restriction $A_q^1(t)$ defined in \eqref{eq:Ax}. Noting that $A_q^1(t)\ge 0$ for all $t\ge 0$, the existence of two distinct zeros $t_1,t_2>0$ with $A_q^1(t_1) = A_q^1(t_2)=0$ would force $A_q^1(t)\equiv 0$. Hence \(R_{11}=r_1=p=0\). However, restricting on the hyperbola we will then have
$\lim_{\to\infty}q(t,t^{-1})
=
2R_{12}.
$ Since $R_{12}<0$ by \eqref{eq:negative-branch}, there exists a point on the hyperbola at which $q$ is negative, contradicting the assumption that $q\in \Ncone(\Fset)$. By symmetry $q$ would also not have two distinct zeros on the nonnegative $x_2$-axis either.  

It remains to rule out the possibility of two zeros \((\xi,0)\) and \((0,\eta)\) of $q$ with
\(\xi,\eta>0\).  Such zeros are double zeros of the respective
axis restrictions, so the restriction $A_q^1(t)$ and $A_q^2(t)$ defined in \eqref{eq:Ax} and \eqref{eq:Ay} are 
$
A_q^1(t)=R_{11}(t-\xi)^2
$ and 
$A_q^2(t)=R_{22}(t-\eta)^2
$ respectively, 
where \(R_{11},R_{22}>0\).  In particular,
\(
r_2=-R_{22}\eta\) and
\(
R_{11}\xi^2=R_{22}\eta^2.
\)
Noting that \((\xi,s)\in\Fset\) for all sufficiently small \(s\geq0\), we can observe from $q(\xi,0)=0$ and $q(\xi,s)\ge 0$ that
\[
0\leq\partial_{x_2}q(\xi,0)
=2(R_{12}\xi+r_2).
\]
Consequently,
\(
R_{12}
\geq{R_{22}\eta}/{\xi}
=\sqrt{R_{11}R_{22}},
\)
contradicting \eqref{eq:negative-branch}.
\end{proof}

In the following lemma, we will further restrict the number of contacts on the hyperbola $\mathcal H$ through the boundary property lemma.

\begin{lemma}[At most one hyperbola contact]
\label{lem:at-most-one-hyperbola-contact}
Let \(q=q_{R,\vr,p}\) span an extreme ray of \(\Ncone(\Fset)\) and
satisfy \eqref{eq:negative-branch}.  If \(q\) is not a positive
multiple of \(1-x_1x_2\), then it has at most one contact on $\mathcal H$.
\end{lemma}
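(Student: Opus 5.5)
The plan is to argue by contradiction. Suppose $q=q_{R,\vr,p}$ spans an extreme ray, satisfies \eqref{eq:negative-branch}, is not a positive multiple of $1-x_1x_2$, and has two distinct contacts $(a,a^{-1})$ and $(b,b^{-1})$ on $\mathcal H$ with $0<a<b$. I will write $q$ explicitly as a conic combination of an affine square and the hyperbola ray. Extremality will then force $q$ onto one of these two rays, and both options are excluded.

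\textbf{Step 1: pin down $P_q$.} The quartic $P_q$ from \eqref{eq:P} is nonnegative on $(0,\infty)$ and vanishes at the interior points $a$ and $b$. Hence each of them is a root of multiplicity at least two, and $(t-a)^2(t-b)^2$ divides $P_q$.

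If $R_{11}=0$, then $P_q$ has degree at most three but four roots counted with multiplicity, so $P_q\equiv0$. This gives $R_{11}=r_1=r_2=R_{22}=0$ and $2R_{12}+p=0$, so $q=p(1-x_1x_2)$. That case is excluded by hypothesis (and $p=0$ would make $q=0$). Therefore $R_{11}>0$ and
\[
P_q(t)=R_{11}(t-a)^2(t-b)^2 .
\]
Comparing coefficients gives
\[
r_1=-R_{11}(a+b),\qquad r_2=-R_{11}ab(a+b),\qquad R_{22}=R_{11}a^2b^2,\qquad 2R_{12}+p=R_{11}(a^2+b^2+4ab).
\]

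\textbf{Step 2: use the axis conditions.} The axis conditions \eqref{eq:non-PSD-cond} include $r_1\ge-\sqrt{pR_{11}}$. Substituting $r_1$ gives $R_{11}(a+b)\le\sqrt{pR_{11}}$, that is,
\[
\mu:=p-R_{11}(a+b)^2\ge0 .
\]
The condition on $r_2$ yields the same inequality.

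\textbf{Step 3: the decomposition.} Set $\ell(\vx):=x_1+ab\,x_2-(a+b)$. Expanding $R_{11}\ell^2+\mu(1-x_1x_2)$ and matching each coefficient with Step 1 should give the identity
\[
q=R_{11}\,\ell(\vx)^2+\mu\,(1-x_1x_2).
\]
The key check is the $x_1x_2$ coefficient: $2R_{12}=R_{11}(a^2+b^2+4ab)-p=2R_{11}ab-\mu$. Both summands lie in $\Ncone(\Fset)$.

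\textbf{Step 4: conclude.} By extremality, $q$ is a nonnegative multiple of both summands. If $\mu>0$, then $q$ is a positive multiple of $1-x_1x_2$, which is excluded. If $\mu=0$, then $q=R_{11}\ell^2$ has positive semidefinite quadratic part, which contradicts $R\not\succeq0$. Hence $q$ has at most one contact on $\mathcal H$.

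The main obstacle I expect is Step 3: guessing the right affine square and verifying that all six coefficients match. A useful guide is that $\ell$ is exactly the affine function whose restriction to $\mathcal H$, after multiplication by $t$, equals $(t-a)(t-b)$.
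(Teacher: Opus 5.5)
Your proposal is correct and follows the paper's proof essentially verbatim. It uses the same affine square $\ell=x_1+ab\,x_2-(a+b)$, the same decomposition $q=R_{11}\ell^2+\mu(1-x_1x_2)$, and the same extremality conclusion. The one difference is how you show $\mu>0$: the paper reads $\delta>4\lambda uv\ge0$ off \eqref{eq:negative-branch} in one step, and that would also give you $\mu>4R_{11}ab>0$ directly, making your Step 2 and the $\mu=0$ case unnecessary.
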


\begin{proof}
Consider any extreme ray $q = q_{R,\vr,p}\in\Ncone(\Fset)$ that satisfies \eqref{eq:negative-branch} and is not a positive multiple of $1-x_1x_2$. We will study 
$P_q(t)$ defined in \eqref{eq:P}, which is the restrict of $q$ on $\mathcal H$. 
Suppose by the contrary that \(\phi\) has two distinct zeros
\((u,u^{-1})\) and \((v,v^{-1})\) on \(\mathcal H\).  Since $q\in\Ncone(\Fset)$, the
corresponding positive roots of \(P_q\) have even multiplicity; hence,
for some \(\lambda\ge 0\),
\[
P_q(t)=\lambda(t-u)^2(t-v)^2.
\]
Here the $\lambda=0$ case is included, in which there are infinitely many zeros on the hyperbola. However, observe that $(t-u)^2(t-v)^2$ is also the restriction of an affine square ray over the hyperbola: 
for
\[
\ell_{u,v}(x_1,x_2):=x_1-(u+v)+uvx_2
\]
we have
\(
t^2\ell_{u,v}(t,t^{-1})^2=(t-u)^2(t-v)^2.
\)
It follows, by coefficient comparison, that
\(
q=\lambda\ell_{u,v}^2+\delta(1-x_1x_2)
\)
for some \(\delta\in\mathbb R\).  The quadratic coefficients satisfy
\(
R_{11}=\lambda,\)
\(
R_{22}=\lambda u^2v^2,\) and
\(
R_{12}=\lambda uv-{\delta}/{2}.
\)
Condition \eqref{eq:negative-branch} therefore yields
\(
\lambda uv-{\delta}/{2}
<-\lambda uv,\) so \(
\delta>4\lambda uv \ge 0.
\)
Thus \(q\) is the conic combination of two non-proportional members
\(\ell_{u,v}^2\) and \(\delta(1-x_1x_2)\) of
\(\Ncone(\Fset)\), while the coefficient $\delta$ of \(1-x_1x_2\) is strictly positive. Extremality of $q$ then forces it to be a positive multiple of \(1-x_1x_2\), a contradition.
\end{proof}

The preceding results and the boundary property lemma now reduce the remaining cases of extreme rays to a specialized mixed contact pattern.

\begin{lemma}[The remaining scenarios]
\label{lem:negative-contact-pattern}
\label{lem:unique-contact}
Let \(q=q_{R,\vr,p}\) span an extreme ray of \(\Ncone(\Fset)\) and
satisfy \eqref{eq:negative-branch}.  Then exactly one of the following
holds:
\begin{enumerate}
\item \(q\) is a positive multiple of \(1-x_1x_2\);
\item \(q\) has exactly two zeros on \(\partial\Fset\), one on
      \(\mathcal A\) and one on \(\mathcal H\).
\end{enumerate}
\end{lemma}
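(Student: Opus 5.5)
The plan is to combine the preceding lemmas and close the remaining gaps with two perturbation arguments in the style of Proposition~\ref{prop:no-axis-contact}. Let $q$ span an extreme ray, satisfy \eqref{eq:negative-branch}, and not be a positive multiple of $1-x_1x_2$. The two alternatives are mutually exclusive, since $1-x_1x_2$ vanishes on all of $\mathcal H$ and hence has infinitely many contacts. By Proposition~\ref{prop:no-axis-contact}, $q$ has at least one contact on $\mathcal A$. By Lemma~\ref{lem:at-most-one-axis-contact} it has exactly one, and by Lemma~\ref{lem:at-most-one-hyperbola-contact} it has at most one contact on $\mathcal H$. Since $\mathcal A\cap\mathcal H=\emptyset$, it remains only to exclude the case where $q$ has no contact on $\mathcal H$.

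So assume the only contact is a single point $\bar\vx\in\mathcal A$, and say $\bar\vx=(\xi,0)$ with $\xi\ge0$ by symmetry. I will build a perturbation direction $e$ that is not proportional to $q$ and satisfies $q\pm\varepsilon e\ge0$ on $\partial\Fset$ for small $\varepsilon$. By Lemma~\ref{lem:boundary-reduction}, since \eqref{eq:negative-branch} is an open condition, both $q\pm\varepsilon e$ then lie in $\Ncone(\Fset)$. Because $q=\tfrac12\bigl((q+\varepsilon e)+(q-\varepsilon e)\bigr)$, extremality would force $e$ to be proportional to $q$, a contradiction.

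\textbf{Choice of $e$.} Take $e$ vanishing to second order at the contact along the axis and also in the inward direction $x_2$. When $\xi>0$, the candidates are $e=x_2(x_1-\xi)$ or $e=x_2$. More robustly, use the two-dimensional family $e\in\operatorname{span}\{x_2,\,x_1x_2\}$ and pick a member not proportional to $q$. This is possible unless $q\in\operatorname{span}\{x_2,x_1x_2\}$; in that exceptional case I instead use $e=x_2^2$ or the direction of $1-x_1x_2$.

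\textbf{Main obstacle: controlling $q\pm\varepsilon e$.} On the $x_2$-axis and on $\mathcal H$, $q$ is bounded below by a positive quantity on compact pieces, but the unbounded ends need care. As $t\to\infty$ on the $x_2$-axis, $q$ grows like $R_{22}t^2$ or linearly, and $e$ must grow no faster. On $\mathcal H$ there are two ends, governed by the leading coefficient $R_{11}$ of $P_q$ as $t\to\infty$ and by the constant term $R_{22}$ as $t\to0$. The perturbation must not spoil the positivity of the relevant leading coefficients. If a leading coefficient of $q$ vanishes, such as $R_{22}=0$, I need the corresponding coefficient of $e$ to vanish as well. Then I compare next-order terms, noting that absence of contact at infinity forces the next coefficient to be positive or else the infimum to be attained.

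Near $\bar\vx$ on the $x_1$-axis, $e=x_2\cdot(\text{affine})$ vanishes identically, so that axis causes no trouble. On the $x_2$-axis near the origin when $\xi=0$, $e=x_1x_2$ vanishes there and only modifies $R_{12}$. I therefore expect the cleanest choice to be $e=x_1x_2$ whenever $q$ is not a multiple of it. $e$ is zero on both axes, and on $\mathcal H$ it equals $1$, while $q$ is bounded below by a positive constant on $\mathcal H$ once the ends are controlled: $P_q$ is positive with positive end coefficients $R_{11},R_{22}$. The degenerate subcases $R_{11}=0$ or $R_{22}=0$, in which $q(t,t^{-1})$ may tend to $0$ at an end, are the step I expect to be hardest. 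There I would add an extra term such as $x_1$ or $x_2$ to $e$, whose restriction to $\mathcal H$ decays at the same rate, so that the perturbation stays dominated by $q$ near that end.
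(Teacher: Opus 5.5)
Your reduction (exclusivity, then Proposition~\ref{prop:no-axis-contact} and Lemmas~\ref{lem:at-most-one-axis-contact} and~\ref{lem:at-most-one-hyperbola-contact}) and your eventual choice $e=x_1x_2$ follow the paper's route. The gap is at the one step that carries real content: showing $\inf_{\mathcal H}q>0$. You prove this only when $R_{11},R_{22}>0$. For $R_{11}=0$ or $R_{22}=0$ you propose adding $x_1$ or $x_2$ to $e$, and that repair fails. Since $x_1x_2\equiv1$ on $\mathcal H$, the restrictions of $x_1x_2+c\,x_1$ and $x_1x_2+c\,x_2$ to $\mathcal H$ are $1+ct$ and $1+c/t$. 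Neither decays at either end, so neither can be dominated by a $q$ whose restriction tends to $0$ there. Cancelling the constant would require a multiple of $1-x_1x_2$, which equals $1$ at your axis contact and destroys nonnegativity there. The remark that ``absence of contact at infinity forces the next coefficient to be positive'' is also not an argument.

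The missing idea is that the degenerate ends never occur. Write $\phi(t)=q(t,t^{-1})=R_{11}t^2+2r_1t+(2R_{12}+p)+2r_2t^{-1}+R_{22}t^{-2}$, and keep your normalization that the unique axis contact is $(\xi,0)$ with $\xi\ge0$. By \eqref{eq:non-PSD-cond}, $R_{11}=0$ forces $r_1\ge0$ and $R_{22}=0$ forces $r_2\ge0$. So it suffices to show $(R_{11},r_1)\neq(0,0)$ and $(R_{22},r_2)\neq(0,0)$.

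\begin{itemize}
\item If $R_{11}=r_1=0$, then $A^1_q\equiv p=A^1_q(\xi)=0$. The whole nonnegative $x_1$-axis then consists of contacts, contradicting Lemma~\ref{lem:at-most-one-axis-contact}.
\item If $R_{22}=r_2=0$ and $\xi=0$, then $A^2_q\equiv p=0$, giving the same contradiction.
\item If $R_{22}=r_2=0$ and $\xi>0$, then $A^1_q(t)=R_{11}(t-\xi)^2$, so $q=R_{11}(x_1-\xi)^2+2R_{12}x_1x_2$. Hence $q(\xi,\xi^{-1})=2R_{12}<0$, contradicting $q\in\Ncone(\Fset)$.
\end{itemize}

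Thus $\phi\to+\infty$ at both ends, so $\phi$ attains a minimum $m>0$. Then $q\pm\varepsilon x_1x_2$ with $0<\varepsilon<m$ finishes the argument as you intended. Non-proportionality is automatic, because $x_1x_2$ vanishes on all of $\mathcal A$ while $q$ has only one axis contact. Your exceptional branches ($x_2$, $x_2^2$, $1-x_1x_2$) are therefore never needed.
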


\begin{proof}
Assume that the first alternative does not hold.  By
Proposition~\ref{prop:no-axis-contact} and
Lemma~\ref{lem:at-most-one-axis-contact}, \(q\) has exactly one axis
contact.  By Lemma~\ref{lem:at-most-one-hyperbola-contact}, it has at
most one hyperbola contact.  It remains only to prove that such a
contact exists.

Suppose otherwise, and set 
\[\phi(t):=q(t,t^{-1})=R_{11}t^2+2r_1t+(2R_{12}+p)+\frac{2r_2}{t}+\frac{R_{22}}{t^2}.\]  
Then
\(\phi>0\) on \((0,\infty)\).  Moreover, we can observe that
$
\phi(t)\to+\infty$ as $t\to0^+$ and as $t\to\infty$, as detailed in the following arguments. 
If the axis contact is the origin, then \(p=0\), and by \eqref{eq:non-PSD-cond} all four
coefficients \(R_{11},R_{22},r_1,r_2\) are nonnegative, and neither
pair \((R_{11},r_1)\), \((R_{22},r_2)\) can be zero because the axis
restrictions are nonzero.  If the contact is \((u,0)\) with \(u>0\),
then \(A^1_q(t)=R_{11}(t-u)^2\) with \(R_{11}>0\), which gives the
limit at infinity.  The pair \((R_{22},r_2)\) cannot vanish, since
otherwise \(q(u,u^{-1})=2R_{12}<0\); this gives the limit at zero.
The case of a contact on the open \(x_2\)-axis is symmetric.

Due to the above properties, \(m:=\min_{t>0}\phi(t)>0\).  For sufficiently small
\(\varepsilon>0\), the polynomials
$
q_\pm:=q\pm\varepsilon x_1x_2
$
agree with \(q\) on \(\mathcal A\), are nonnegative on
\(\mathcal H\), and still satisfy \eqref{eq:negative-branch}.
Lemma~\ref{lem:boundary-reduction} gives
\(q_\pm\in\Ncone(\Fset)\).  This is a nontrivial decomposition of
\(q=(q_++q_-)/2\): the perturbation \(x_1x_2\) vanishes on all of
\(\mathcal A\), whereas \(q\) does not.  This contradicts extremality.
Thus the hyperbola contact exists.
\end{proof}

Based on the above lemma, there are only two remaining scenarios left when classifying extreme ray $q$. The first scenario is when $q$ has exactly one contact on a positive axis and one other on the hyperbola. We call it the open-axis and hyperbola contact scenario. The second is when $q$ has exactly one contact at the origin and one other on the hyperbola, which we call the origin and hyperbola contact scenario. The characterization of extreme rays for the remaining two scenarios are studied below.

\subsubsection{Remaining non-PSD scenarios and overall extreme ray characterization}
We are now ready to study the only two remaining scenarios and finish the proof of our major result in Theorem \ref{thm:main}. We start with the open-axis and hyperbola contact scenario.

\begin{proposition}[The open-axis and hyperbola contact scenario]
\label{prop:open-axis-hyperbola}
Let \(q=q_{R,\vr,p}\) span an extreme ray of \(\Ncone(\Fset)\) and
satisfy \eqref{eq:negative-branch}. If
$q(u,0)=q(v,v^{-1})=0$ for some $u,v>0$, 
then \(0<u<v\) and \(q\) is a positive multiple of
\(g^1_{u,v}\). 
If
$
q(0,u)=q(v^{-1},v)=0$ for some $u,v>0$,
then \(0<u<v\) and \(q\) is a positive multiple of
\(g^2_{u,v}\).
\end{proposition}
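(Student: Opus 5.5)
The plan is to pin down all six coefficients of $q$ from the two contacts and then use extremality to supply the one remaining condition. We treat the first statement; the second follows by exchanging $x_1$ and $x_2$.

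\emph{Axis contact.} The point $t=u>0$ is an interior zero of the quadratic $A^1_q$, which is nonnegative on $[0,\infty)$. Hence it is a double root, and $A^1_q(t)=R_{11}(t-u)^2$. If $R_{11}=0$, then $A^1_q\equiv0$, which is excluded as in the proof of Lemma~\ref{lem:at-most-one-axis-contact}. So $R_{11}>0$, and after scaling $R_{11}=1$, $r_1=-u$ and $p=u^2$.

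\emph{Hyperbola contact.} The point $t=v$ is an interior zero of $P_q\ge0$ on $(0,\infty)$, so $P_q(v)=P_q'(v)=0$. These are two linear equations in the three remaining unknowns $(R_{12},r_2,R_{22})$. We solve for $R_{12}$ and $r_2$ in terms of $c:=R_{22}$. This writes the admissible polynomials as an affine line
\[
q=q_0+c\,k .
\]
Here $q_0$ is the candidate with $R_{22}=0$, and we expect direct computation to show $q_0=g^1_{u,v}$. The direction $k$ is a quadratic with $A^1_k\equiv0$, $P_k(v)=P_k'(v)=0$ and $x_2^2$-coefficient $1$. Since $R_{22}\ge0$ by \eqref{eq:non-PSD-cond}, we have $c\ge0$.

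\emph{Main step: $c=0$.} Suppose $c>0$. We show that $q\pm\varepsilon k$ lie in $\Ncone(\Fset)$ for small $\varepsilon>0$; since $q$ is the average of these two non-proportional members, this contradicts extremality. By Lemma~\ref{lem:boundary-reduction} it suffices to check nonnegativity on $\partial\Fset$, and for small $\varepsilon$ condition \eqref{eq:negative-branch} is preserved.
\begin{itemize}
\item On the $x_1$-axis the perturbation vanishes, since $A^1_k\equiv0$.
\item On the $x_2$-axis, $q$ has no contact by Lemma~\ref{lem:unique-contact}. Moreover $A^2_q$ has leading coefficient $c>0$, so $A^2_q$ is bounded below by a positive multiple of $1+t^2$, and small perturbations of $A^2_q$ stay nonnegative.
\item On $\mathcal H$, write $P_q=(t-v)^2Q_q$ and $P_k=(t-v)^2Q_k$, where $Q_q$ and $Q_k$ are quadratics. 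We need $Q_q>0$ on $[0,\infty)$, with the correct growth at $\infty$. Positivity on $(0,\infty)\setminus\{v\}$ holds because $v$ is the only hyperbola contact. At $0$ we have $Q_q(0)=c/v^2>0$. At $\infty$ the leading coefficient is $R_{11}=1>0$. Then $Q_q\pm\varepsilon Q_k>0$ for small $\varepsilon$.
\end{itemize}
The delicate point is $Q_q(v)>0$, i.e.\ excluding a contact of order four at $v$. This is where I expect the main obstacle. I would first try to show that order-four contact forces $c=0$ or contradicts \eqref{eq:negative-branch}, by explicitly computing $Q_q(v)$ as an affine function of $c$. If that fails, I would add $c$ to the parameters and argue that in that case $q$ is a conic combination of $q_0$ and another member of the cone with a different contact structure.

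\emph{Conclusion.} Once $c=0$, we have $q\propto g^1_{u,v}$ with $u,v>0$. It remains to show $u<v$. The $x_2$-coefficient of $g^1_{u,v}$ equals $r_2=v^2(v-u)$. Since $R_{22}=0$, condition \eqref{eq:non-PSD-cond} gives $r_2\ge -\sqrt{pR_{22}}=0$, so $v\ge u$. If $u=v$, then $r_2=0$ and $q=(x_1-u)^2$. This is a PSD quadratic and so violates \eqref{eq:negative-branch}. Hence $0<u<v$.
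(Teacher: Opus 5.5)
Your proposal is correct, and it uses extremality differently from the paper.

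\textbf{The paper's route.} The paper writes $P_q(t)=(t-v)^2\bigl((t+d)^2-\gamma\bigr)$ with $d=v-u$. It first extracts the sign information $0<\gamma\le d^2$ and $d>0$ from \eqref{eq:negative-branch} and $P_q\ge0$. It then exhibits the explicit identity
$q=(1-\gamma/d^2)(x_1-u-vd\,x_2)^2+(\gamma/d^2)\,g^1_{u,v}$,
so extremality is applied to a named conic combination.

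\textbf{Your route.} You move along the same one-parameter family. Your $c=R_{22}$ equals $v^2(d^2-\gamma)$. Your direction is
$k=x_2^2+v^{-2}x_1x_2-2v^{-1}x_2=\bigl((x_1-u-vd\,x_2)^2-g^1_{u,v}\bigr)/(v^2d^2)$, and $Q_k\equiv v^{-2}$. Instead of naming the second generator, you certify $q\pm\varepsilon k\in\Ncone(\Fset)$ through Lemma~\ref{lem:boundary-reduction} and the contact count of Lemma~\ref{lem:unique-contact}. You then get $0<u<v$ only at the end, from $r_2\ge0$ and \eqref{eq:negative-branch}.

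\textbf{Trade-off.} The paper's route is a self-contained algebraic certificate and does not need the contact-uniqueness lemmas. Yours avoids guessing the decomposition and the preliminary sign analysis of $\gamma$ and $d$. The price is reliance on those lemmas plus one extra nondegeneracy check.

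\textbf{The step you flagged.} The check $Q_q(v)>0$, which you called the main obstacle, closes in two lines.
Since $P_q=(t-v)^2Q_q\ge0$ on $(0,\infty)$, we have $Q_q\ge0$ there. If $Q_q(v)=0$, then $v$ is an interior minimizer of the monic quadratic $Q_q$. Hence $Q_q=(t-v)^2$ and $P_q=(t-v)^4$.
Compare this with $P_q(t)=t^4-2ut^3+(2R_{12}+u^2)t^2+2r_2t+R_{22}$. The cubic coefficient gives $u=2v$. The quadratic coefficient gives $2R_{12}+4v^2=6v^2$, i.e.\ $R_{12}=v^2>0$. This contradicts \eqref{eq:negative-branch}; indeed this configuration is the affine square $q=(x_1-2v+v^2x_2)^2$.
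So when $c>0$, $Q_q$ is positive on $[0,\infty)$, with $Q_q(0)=c/v^2>0$ and leading coefficient $1$. Its infimum there is therefore positive, and adding $\pm\varepsilon/v^2$ keeps it positive. The rest of your argument goes through as written, and your fallback plan is not needed.
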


\begin{proof}
We will only prove the statement involving $g_{u,v}^1$; the one involving $g_{u,v}^2$ follows by exchanging
\(x_1\) and \(x_2\).  The zero \(u\) is an interior zero of the
nonnegative, nonzero quadratic \(A^1_q\) defined in \eqref{eq:Ax}.  Hence
\(
A^1_q(t)=\lambda(t-u)^2
\)
for some \(\lambda>0\).  After scaling, without of generality we may assume \(\lambda=1\) in the description of the extreme $q$:
\[
q(\vx)
=(x_1-u)^2+R_{22}x_2^2+2R_{12}x_1x_2+2r_2x_2.
\]
The contact at \((v,v^{-1})\) implies that $t=v$ is a double positive root of the monic
quartic \(P_q\) defined in \eqref{eq:P}. Noting that its cubic coefficient is \(-2u\), so its form has to be
\begin{equation}
P_q(t)
=(t-v)^2\bigl((t+d)^2-\gamma\bigr).
\label{eq:xh-factor}
\end{equation}
for some
\(\gamma\in\mathbb R\), where we denote $d:=v-u$.
Comparing coefficients gives
\begin{align}
\label{eq:q_coeffs}
R_{22}=v^2\bigl(d^2-\gamma\bigr),\ 
R_{12}=-vd-\frac{\gamma}{2},\text{ and }
r_2=uvd+v\gamma.
\end{align}

We can observe that \(0 < \gamma \leq d^2\). Indeed, if \(\gamma \leq 0\), by \eqref{eq:q_coeffs}
$
R_{12}
\geq-vd
\geq-v\sqrt{d^2-\gamma}
=-\sqrt{R_{11}R_{22}},
$
contradicting \eqref{eq:negative-branch}; if \(\gamma > d^2\), then \(P_q(0) < 0\) contradicting the non-negativity of \(P_q\). Note also that \(d>0\); otherwise \(P_q(-d) = -(t-v)^2\gamma <0\).  

With $d>0$, by directly computation based on the relation \eqref{eq:q_coeffs}, we have
\begin{equation}
q(\vx)
=\left(1-\frac{\gamma}{d^2}\right)
  (x_1-u-vd\,x_2)^2
+\frac{\gamma}{d^2}g^1_{u,v}(\vx).
\label{eq:xh-two-ray-decomposition}
\end{equation}
Hence by the arguments above, we have \(1-\frac{\gamma}{d^2} \geq 0\) and \(\frac{\gamma}{d^2} > 0\). Therefore $q$ is a conic combination of an affine square ray and a bitangent ray $g^1_{u,v}$, and the coefficient of $g^1_{u,v}$ is strictly positive. Extremality of $q$ then requires that it is a positive multiple of \(q=g^1_{u,v}\).
\end{proof}

Our last scenario of study is the origin and hyperbola contact scenario. 

\begin{proposition}[The origin and hyperbola contact scenario: a direct proof]
\label{prop:corner-hyperbola}
Let \(q=q_{R,\vr,p}\) span an extreme ray of \(\Ncone(\Fset)\) and
satisfy \eqref{eq:negative-branch}. 
If
\(
q(0,0)=q(v,v^{-1})=0\)
for some \(v>0,
\)
then \(q\) is a positive multiple of one of the following lifted tangent or bitangent rays: 
$g^1_{0,v}$,
$h_v$, or
$g^2_{0,1/v}$.
\end{proposition}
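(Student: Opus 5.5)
Since $q(0,0)=0$ we have $p=0$, and \eqref{eq:non-PSD-cond} gives $R_{11},R_{22},r_1,r_2\geq0$. So
\[
q(\vx)=R_{11}x_1^2+2R_{12}x_1x_2+R_{22}x_2^2+2r_1x_1+2r_2x_2 ,
\]
and, with $p=0$, the hyperbola restriction $P_q(t)=R_{11}t^4+2r_1t^3+2R_{12}t^2+2r_2t+R_{22}$ from \eqref{eq:P} determines every coefficient of $q$. The map $q\mapsto P_q$ is therefore linear and injective on quadratics with $p=0$. The plan is to parametrize $P_q$ by the contact at $(v,v^{-1})$ and then describe the admissible coefficients as a polyhedral cone whose extreme rays are exactly the three named rays plus one PSD ray.

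First, the contact at $(v,v^{-1})$ makes $t=v$ a positive root of the quartic $P_q$, which is nonnegative on $(0,\infty)$. Such a root has even multiplicity, so
\[
P_q(t)=(t-v)^2(at^2+bt+c).
\]
Comparing coefficients gives
\[
R_{11}=a,\qquad 2r_1=b-2va,\qquad 2R_{12}=c-2vb+v^2a,\qquad 2r_2=v^2b-2vc,\qquad R_{22}=v^2c.
\]
The sign conditions above then read
\[
a\geq0,\qquad c\geq0,\qquad b-2va\geq0,\qquad vb-2c\geq0 .
\]
These four linear inequalities cut out a pointed polyhedral cone $K$ in $(a,b,c)$-space. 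Conversely, every $(a,b,c)\in K$ gives $b\geq0$, hence a quadratic factor that is nonnegative for $t>0$.

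Next I would list the extreme rays of $K$ by making two of the four constraints active.
\begin{itemize}
\item $a=c=0$ gives $(0,1,0)$, so $P=t(t-v)^2$. This is $t^2h_v(t,t^{-1})$, i.e.\ $q\propto h_v$.
\item $c=0$ and $b=2va$ give $(1,2v,0)$. This yields $R_{11}=1$, $r_1=0$, $R_{22}=0$, which should be $g^1_{0,v}$; I would check this by direct coefficient comparison.
\item $a=0$ and $vb=2c$ give $(0,2,v)$. This should be $g^2_{0,1/v}$ up to scaling, again checked by expanding.
\item $b=2va$ and $vb=2c$ give $(1,2v,v^2)$, so $P=(t-v)^2(t+v)^2$. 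This is $q=(x_1-v^2x_2)^2$, with $R_{12}=-\sqrt{R_{11}R_{22}}$.
\end{itemize}
The remaining pairings, $a=0$ with $b=2va$ and $c=0$ with $vb=2c$, force $(a,b,c)=0$. Because $q\mapsto P_q$ is injective, $q$ is a conic combination of $h_v$, $g^1_{0,v}$, $g^2_{0,1/v}$ and $(x_1-v^2x_2)^2$. Each of these lies in $\Ncone(\Fset)$, by Proposition~\ref{prop:membership} and because squares are globally nonnegative.

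Finally, I would invoke extremality: every summand with positive weight must be proportional to $q$. The weight cannot sit on $(x_1-v^2x_2)^2$ alone, since that quadratic has $R_{12}=-\sqrt{R_{11}R_{22}}$, violating the strict inequality \eqref{eq:negative-branch}. Hence some tangent or bitangent generator has positive weight, and $q$ is a positive multiple of it. The main obstacle, though routine, is the bookkeeping: verifying that the rays $(1,2v,0)$ and $(0,2,v)$ really reproduce $g^1_{0,v}$ and $g^2_{0,1/v}$ after scaling, and confirming that the four generators listed exhaust the extreme rays of $K$.
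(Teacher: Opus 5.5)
Your proof is correct and follows the paper's route: factor $(t-v)^2$ out of $P_q$, turn $R_{11},R_{22},r_1,r_2\geq0$ into linear conditions on the quotient, write $q$ as a conic combination of $(x_1-v^2x_2)^2$, $h_v$, $g^1_{0,v}$, $g^2_{0,1/v}$, and then invoke extremality together with the strict inequality \eqref{eq:negative-branch}. The only difference is that the paper writes the decomposition explicitly in two cases ($R_{22}>v^4R_{11}$ or not), which amounts to splitting your cone $K$ along the plane through the rays $(0,1,0)$ and $(1,2v,v^2)$; you instead obtain the decomposition from enumerating the extreme rays of $K$.
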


\begin{proof}
Since there is a contact at the origin, $p=0$.  It follows from
\eqref{eq:non-PSD-cond} that
$R_{11},R_{22},r_1,r_2\geq0$.  Moreover, $P_q(t)\geq0$ for
$t>0$ and $P_q(v)=0$, so $v$ is a root of even multiplicity and
$(t-v)^2$ divides $P_q$.  Comparing the leading, cubic, and constant
coefficients gives
\[
P_q(t)
=(t-v)^2
\left(R_{11} t^2+2(r_1+R_{11} v)t+\frac{R_{22}}{v^2}\right),
\]
\begin{equation}
r_2=v^3 R_{11}+v^2 r_1-\frac{R_{22}}{v},\text{ and }
R_{12}=\frac{R_{22}}{2v^2}-\frac32v^2R_{11}-2vr_1.
\label{eq:corner-direct-coefficients}
\end{equation}
Here recalling that \(r_2\geq0\) in the above description we have
$R_{22}\leq v^4R_{11}+v^3r_1$.

Using the relations \eqref{eq:corner-direct-coefficients}, direct computation yields the following two conic combination descriptions. First, if $R_{22}>v^4R_{11}$, then 
\begin{equation}
\begin{aligned}
q(\vx)={}&R_{11}(x_1-v^2x_2)^2
+2\left(\frac{v^4R_{11} + v^3r_1 - R_{22}}{v^3}\right)h_v(\vx)
+(R_{22}-v^4R_{11}) g^2_{0,1/v}(\vx).
\end{aligned}
\end{equation}
Second, if $R_{22}\leq v^4R_{11}$, then
\begin{equation}
\begin{aligned}
q(\vx)={}&
\left(R_{11}-\frac{R_{22}}{v^4}\right)g^1_{0,v}(\vx)
+\frac{R_{22}}{v^4}(x_1-v^2x_2)^2
+2r_1h_v(\vx).
\end{aligned}
\label{eq:corner-direct-decomposition}
\end{equation}
For both cases, $q$ is a conic combination of an affine square ray, a lifted tangent ray, and a bitangent ray, and the coefficients are all nonnegative. Extremality of $q$ then requires that it is a positive multiple of one of the rays in the combination, excluding the affine squares one since $q$ is non-convex.

\end{proof}

We are now ready to finish the characterization of all extreme rays of $\Ncone(\Fset)$ and prove our major result in Theorem \ref{thm:main}.

\begin{proof}[Proof of Theorem \ref{thm:main}]
    By Propositions \ref{prop:elementary-extreme}, \ref{prop:tangent-extreme}, and \ref{prop:bitangent-extreme}, all rays listed in the theorem are extreme rays of $\Ncone(\Fset)$. It suffices to show that all extreme rays of $\Ncone(\Fset)$ are among such list. Let $q_{R,\vr,\rho}$ be an extreme ray of $\Ncone(\Fset)$. If $R\succeq 0$, then by Proposition \ref{prop:psd-extremes} it is among the list. If $R\not\succeq 0$, then note from the observation \eqref{eq:non-PSD-cond} that either $R_{12}>\sqrt{R_{11}R_{22}}$ or $R_{12}<-\sqrt{R_{11}R_{22}}$. In the case with $R_{12}>\sqrt{R_{11}R_{22}}$, by Proposition \ref{prop:positive-offdiagonal} it is among the list. In the case with $R_{12}<-\sqrt{R_{11}R_{22}}$, by Proposition \ref{prop:no-axis-contact}, Lemma \ref{lem:negative-contact-pattern}, and Propositions \ref{prop:open-axis-hyperbola} and \ref{prop:corner-hyperbola}, it is also among the list.

    It remains to justify that these extreme rays generate the cone. The cone \(\Ncone(\Fset)\) is clearly closed and pointed; we will also show that it has a compact base by defining the linear functional $
        L(q):=\int_{[0,1]^2}q(\vx)\,d\vx.$
    Since \([0,1]^2\subseteq\Fset\), one has \(L(q)>0\) for every
    nonzero \(q\in\Ncone(\Fset)\): otherwise the nonnegative polynomial
    \(q\) would vanish on \([0,1]^2\) and thus vanish identically.  Therefore
    $
        B:=\Set{q\in\Ncone(\Fset)\mid L(q)=1}
    $
    is a compact base of \(\Ncone(\Fset)\).  Indeed, it is closed, and if it
    were unbounded, normalizing an unbounded sequence in \(B\) and taking a
    convergent subsequence would produce a nonzero member of
    \(\Ncone(\Fset)\) on which \(L\) vanishes.  Every point of
    the finite-dimensional compact convex set \(B\) is a convex combination
    of its extreme points, and the extreme points of \(B\) correspond to the
    extreme rays of \(\Ncone(\Fset)\).  Hence \(\Ncone(\Fset)\) is the conic
    hull of the extreme rays listed in the theorem.
\end{proof}

\section{Conclusions}\label{sec:conclusion}

We obtained an exact conic description of the lifted convex hull
$\cC(\Fset)$ from a complete classification of the extreme rays of $\Ncone(\Fset)$. Beyond the rays underlying the standard Shor–RLT
inequalities, the classification identifies the lifted tangent and bitangent
families. Within the Shor–RLT system, the lifted tangent family reduces to
the known second-order-cone constraint $X_{12}^{2}\leq x_1x_2$,
whereas the bitangent families supply the additional relations linking
$X_{12}$ to the diagonal entries $X_{11}$ and $X_{22}$. Although these
inequalities form continuously parameterized families, their combined effect
admits a fixed-size semidefinite representation with only two scalar auxiliary
variables.

The same classification also has algebraic consequences. A nonlinear
parametrization identifies $\Ncone(\Fset)$ with a structured
six-dimensional family of ternary octic forms, and we proved that every
nonnegative form in this family can be written as a sum of squares of ternary
quartic forms. We also derive a bounded-degree preordering certificate for quadratics nonnegative on F and a degree-four
certificate for certain quartic nonnegative on the  half-strip.

The structural foundation of these results is the extreme-ray analysis. The
proof separates the cases of PSD and non-PSD quadratic parts. In the latter
case, boundary reduction and contact analysis show that the bitangent rays
arise precisely from mixed axis–hyperbola contacts. This geometry explains how the bitangent inequalities arise and why they are essential to the exact description of $\cC(\Fset)$.

\section{Acknowledgement}
The first and second authors are partially supported by AFOSR grant FA9550-25-1-0278. This work was supported in part by OpenAI API credits provided by Clemson University and administered by Clemson University Research Computing and Data (RCD). It also used open-weight language models hosted by Clemson University and made available through the Clemson RCD LLM Service.
The OpenAI 5.6 (Sol) model is used in proving Proposition \ref{prop:family-II-sdr}, finding SOS decompositions in Theorem \ref{thm:PisSOS}, the conic combinations in Propositions \ref{prop:open-axis-hyperbola}, \ref{prop:corner-hyperbola}. OpenAI and open-weight language models are also used in polishing the writing in this paper.
\bibliography{article}

\end{document}